 \DeclareMathOperator{\End}{End}
\DeclareMathOperator{\Sym}{Sym} 
\DeclareMathOperator{\Hom}{Hom}
\DeclareMathOperator{\Diff}{Diff}
\DeclareMathOperator{\Der}{Der}
\theoremstyle{thm}
\newtheorem{theorem}{Theorem}[section]
\newtheorem{thm-def}{Theorem/Definition}[theorem]
\newtheorem{proposition}[theorem]{Proposition}
\newtheorem{lemma}[theorem]{Lemma}
\newtheorem{corollary}[theorem]{Corollary}
\numberwithin{equation}{section}
\theoremstyle{def}
\newtheorem{definition}[theorem]{Definition}
\theoremstyle{definition}
\newtheorem{remark}[theorem]{Remark}
\newtheorem*{example}{Example}
\theoremstyle{texttheorem}
\newcommand{\C}{{\mathbb C}}
\newcommand{\K}{{\mathbb K}}
\newcommand{\g}{\mathfrak{g}}
\newcommand{\h}{\mathfrak{h}}
\newcommand{\sfG}{\mathsf{G}}
\newcommand{\gf}{\mathbb{K}}
\newcommand{\sym}{\operatorname{Sym}}
\newcommand{\Ch}{\operatorname{Ch}}
\newcommand{\ch}{\operatorname{ch}}
\newcommand{\Td}{\operatorname{Td}}
\newcommand{\ad}{\operatorname{ad}}
\newcommand{\calD}{\mathcal{D}}
\newcommand {\calA}{\mathcal{A}}
\newcommand{\calU}{\mathcal{U}}
\newcommand{\calV}{\mathcal{V}}
\newcommand{\calO}{\mathcal{O}}
\newcommand{\calL}{\mathcal{L}}
\newcommand{\calJ}{\mathcal{J}}
\newcommand{\calW}{\mathcal{W}}
\newcommand{\calE}{\mathcal{E}}
\newcommand{\calT}{\mathcal{T}}
\title{An index theorem for Lie algebroids}
\author{Arie Blom}
\author{Hessel Posthuma}
\address{\newline
        Hessel Posthuma, {\tt H.B.Posthuma@uva.nl}\newline
         \indent {\rm Korteweg-de Vries Institute for Mathematics,
        University of Amsterdam,
         The Netherlands} 
         \newline
        Arie Blom, {\tt A.Blom@uva.nl}\newline
         \indent {\rm Korteweg-de Vries Institute for Mathematics,
        University of Amsterdam,
         The Netherlands}
}
\date{\today}
\begin{document}
\maketitle
\begin{abstract}
We study Lie algebroids from the point of view noncommutative geometry.
More specifically, using ideas from deformation quantization, we use the PBW-theorem for Lie algebroids to construct 
a Fedosov-type resolution for the associated sheaves of Weyl algebras. This resolution enables us to 
construct a ``character map'' --in the derived category-- from the sheafified cyclic chain complexes to the Chevalley--Eilenberg complex
of the Lie algebroid. The index theorem computes the evaluation of this map on the trivial cycle in terms of the Todd--Chern characteristic class. 
Finally, we show compatibility of the character map with the Hochschild--Kostant--Rosenberg morphism.
\end{abstract}

\section*{Introduction}
Lie algebroids are joint generalizations of tangent sheaves and Lie algebras. These two special cases hint at the most important
features of Lie algebroids: first, considered as a generalized tangent sheaf they exhibit enough structure to do geometry, in particular to set up
a Cartan-type calculus with which one can define a Lie algebroid cohomology theory generalizing de Rham and Lie algebra cohomology.  
Second, Lie algebroids form a notion of (infinitesimal) symmetries whose global version is given by a (Lie) groupoid, similar to the 
integration of Lie algebras to Lie groups. In geometrical situations, e.g. Poisson geometry, the Lie algebroid usually shows up naturally, whereas an integrating groupoid, if it exists,
is not visible at first sight.

In this paper we study Lie algebroids from the point of view of noncommutative geometry via their universal enveloping algebras. The universal enveloping 
algebra of a Lie algebroid is a sheaf of noncommutative algebras generalizing the usual universal enveloping algebra of a Lie algebra as well as the 
sheaf of differential operators associated to the tangent sheaf of a manifold or variety. As a sheaf of noncommutative algebras, the methods of noncommutative
geometry can be applied and in this paper we are interested in the Hochschild and cyclic homology of these objects. Recall that cyclic homology is the noncommutative
version of de Rham cohomology, and our main aim is to compare the cyclic homology of the sheaf of universal enveloping algebras with the Lie algebroid cohomology
of the underlying Lie algebroid mentioned above. The result can phrased as an index theorem.

Let us now describe the main result of this paper. We use a sheaf version of Lie algebroids over locally ringed spaces as in \cite{cvdb} to encompass both the 
differentiable and the holomorphic case. For a Lie algebroid $(\calO_{X},\calL)$ over a ringed space $(X,\calO_{X})$, c.f. definition \S \ref{def-la}, 
we denote by $\calU(\calL)$ its sheaf of universal enveloping algebras. We can twist the whole construction by a locally free $\calO_{X}$-module $\calE$, resulting in a sheaf of algebras 
$\calU(\calL;\calE)$.
Associated to these data are two sheaf complexes: the ``commutative'' Chevalley--Eilenberg complex
$(\Omega^{\bullet}_{\calL},d_{\calL})$ computing the Lie algebroid cohomology and its ``noncommutative'' counterpart, the periodic cyclic complex $(CC_{\bullet}(\calU(\calL))((u)),b+uB)$
 computing the cyclic homology. To phrase the main results of this paper in one theorem, we use the language of derived categories:
\begin{theorem}\label{mainthm}
Let $(\calL,\calO_{X})$ be a Lie algebroid of constant rank $r$ over a ringed space $(X,\calO_{X})$.
\begin{itemize}
\item[$i)$] In the derived category $\calD(X)$ of sheaves on $X$ there exists a canonical morphism
\[
\Phi\in\Hom_{\calD(X)}\left(\left(CC_{\bullet}(\calU(\calL))((u)),b+uB\right),\left(\Omega^{-\bullet}_{\calL}((u))[2r],d_{\calL}\right)\right),
\]
\item[$ii)$] Restricted to the structure sheaf $\calO_{X}\subset\calU(\calL)$, the following diagram commutes:
\[
\xymatrix{CC_\bullet(\calO_{X})\ar[d]^{i}
\ar[rr]^{HKR_{\calL}}&&\Omega^\bullet_\calL\ar[d]^{\cup{\rm Td}_\calL{\rm Ch}_{\calL}}
\\
CC_\bullet(\calU(\calL;\calE))\ar[rr]_{\Phi}&&\Omega^\bullet_\calL}
\]
\end{itemize}
\end{theorem}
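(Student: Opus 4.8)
The plan is to build the morphism $\Phi$ via a Fedosov-type resolution of the sheaf of Weyl algebras attached to $\calL$, then deduce both statements by local-to-global arguments. First I would recall the PBW-theorem for Lie algebroids: a choice of connection on $\calL$ identifies $\calU(\calL)$ with the completed symmetric algebra $\widehat{\Sym}(\calL^\vee)$ as sheaves of $\calO_X$-modules, and more importantly produces a flat Fedosov-type differential $D = d_\calL + [\,\tfrac{1}{u}\,r,\,-\,]$ on $\Omega^\bullet_\calL \otimes \calW(\calL;\calE)$ whose degree-zero cohomology sheaf is $\calU(\calL;\calE)$. Here $\calW(\calL;\calE)$ is the sheaf of (formal, Rees-completed) Weyl algebras with values in $\End(\calE)$. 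This is exactly the output of the "Fedosov-type resolution" advertised in the abstract; I would assume it as an established input (it is the technical heart set up before this theorem). The effect is that the hypercohomology of $\calU(\calL;\calE)$ — and hence its Hochschild and cyclic homology as computed via sheafified mixed complexes — can be computed on the resolution, where everything is fiberwise a completed Weyl algebra and the answer is governed by the formal geometry of the jet bundle.

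For part $i)$, I would apply the known local computation of the cyclic homology of the formal Weyl algebra: the (periodic) cyclic complex of $\widehat{A}_\hbar$ with coefficients in a bundle of matrices is quasi-isomorphic, via an explicit trace-type map built from the Feigin--Felder--Shoikhet cocycle (or equivalently the Bott element / formal index density), to the base shifted by $[2r]$ and twisted by $u$. Sheafifying this fiberwise equivalence over the Fedosov resolution and observing that the construction is $D$-equivariant (the trace density is a closed element of $\Omega^\bullet_\calL$) produces a morphism of complexes of sheaves $\bigl(CC_\bullet(\calU(\calL))((u)), b+uB\bigr) \to \bigl(\Omega^{-\bullet}_\calL((u))[2r], d_\calL\bigr)$ in the derived category $\calD(X)$; canonicity follows because different Fedosov data are related by gauge equivalences that act trivially on cohomology. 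This is the construction of $\Phi$.

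For part $ii)$, the square is checked by restricting all four corners to the structure sheaf. The left vertical $i$ is induced by $\calO_X \hookrightarrow \calU(\calL;\calE)$; the top horizontal is the Hochschild--Kostant--Rosenberg map $HKR_\calL \colon CC_\bullet(\calO_X) \to \Omega^\bullet_\calL$ into Lie algebroid forms (constructed, as usual, by antisymmetrizing and using the anchor to land differentials in $\calL^\vee$); the right vertical is cup product with the Todd--Chern class $\Td_\calL\,\Ch_\calL(\calE) \in H^\bullet(\Omega^\bullet_\calL, d_\calL)$. The strategy is to trace a Hochschild chain $a_0 \otimes \cdots \otimes a_n$ of functions through the Fedosov resolution: the HKR-type antisymmetrization appears as the leading symbol term, and the remaining contributions assemble — exactly as in Fedosov's proof of the index theorem and its algebraic-index-theorem refinements (Nest--Tsygan, Feigin--Felder--Shoikhet, Bressler--Nest--Tsygan) — into the characteristic class $\Td_\calL\,\Ch_\calL$, the $\Td$ factor coming from the Weyl-algebra trace normalization and the $\Ch$ factor from the twist by $\calE$. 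Since both composites are maps of complexes of sheaves agreeing up to the stated class, commutativity in $\calD(X)$ reduces to a local identity of closed forms, which one verifies fiberwise on the jet bundle.

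The main obstacle I anticipate is the second part: matching the output of the Fedosov trace density, computed through a Lie-algebroid connection, with the intrinsically-defined $\Td_\calL\,\Ch_\calL$, independently of the auxiliary connection. Concretely one must show that the $A$-form produced by the resolution is cohomologous to $\Td_\calL\,\Ch_\calL(\calE)$ in $(\Omega^\bullet_\calL, d_\calL)$ and that the homotopy can be chosen compatibly with the sheaf structure so the diagram commutes on the nose in $\calD(X)$, not merely on cohomology. I expect this to require a careful Chern--Weil / transgression argument adapted to the Lie algebroid setting, controlling the dependence on the chosen connection and showing the relevant characteristic forms are the curvature polynomials one expects; the universal (formal-geometry) version of this computation, together with functoriality of the Fedosov construction, is what makes it go through.
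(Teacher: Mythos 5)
Your overall strategy---PBW/Fedosov resolution of the universal enveloping algebra, fiberwise evaluation of the Feigin--Felder--Shoikhet cyclic cocycle on the Weyl algebra, and the local Riemann--Roch theorem to identify the class of $\Phi(1)$ with $\Td_\calL\Ch_\calL$---is exactly the architecture of the paper's proof, and your identification of the leading HKR term plus characteristic-class corrections in part $ii)$ is the right picture. However, there is one genuine gap that the paper spends most of its effort closing and that your sketch passes over: for a general ringed space an $\calL$-connection on $\calL$ exists only locally, so the Fedosov data, the element $A(\nabla)$, and hence your trace-density map are only defined on the opens of a cover. Your assertion that ``canonicity follows because different Fedosov data are related by gauge equivalences that act trivially on cohomology'' does not by itself produce a morphism in $\calD(X)$: one must exhibit explicit, coherent homotopies between the locally defined character maps and assemble them into a single chain map into a \v{C}ech--de Rham bicomplex. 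The paper does this by putting the affine family of connections $\nabla^{\rm aff}_k$ on the $k$-th nerve of the \v{C}ech groupoid, computing the derivative of the PBW map with respect to the connection (Proposition \ref{depconn}, yielding the derivations $\theta_i$), and inserting $\iota_{\theta_{i_1}}\cdots\iota_{\theta_{i_k}}$ into the cocycle before integrating over $\Delta^k$. These $\theta$-terms are not bookkeeping: they contribute the extra curvature components $dt_i\,\gamma_i$ in $R(\nabla^{\rm tot}_k)$ that must be matched against the simplicial Chern--Weil representative of $\Td_\calL\Ch_\calL$ in the proof of part $ii)$, and they must also be tracked in the degree-counting argument that isolates the HKR term. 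Without this simplicial/\v{C}ech layer your construction gives a map only when a global connection exists (e.g.\ the affine or $C^\infty$ case), and the ``canonical in the derived category'' claim of part $i)$ is not established.

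A secondary, more cosmetic discrepancy: you model the resolution on the deformation-quantization Weyl algebra with differential $d_\calL+[\tfrac{1}{u}r,-]$, whereas the paper works directly with $\calW(\calL)=\Diff(\widehat{\sym}(\calL^\vee))\cong\widehat{\sym}(\calL^\vee)\otimes\sym(\calL)$ (no formal parameter) and obtains the flat differential as $\nabla+A(\nabla)$ with $A(\nabla)=j_\nabla\circ d_{CE}\circ j_\nabla^{-1}-\nabla$ satisfying the Maurer--Cartan equation of Lemma \ref{lemma-mc}. This choice is what makes the dependence on the connection explicitly computable, which in turn is what feeds the \v{C}ech gluing above.
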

In the diagram above, $HKR_{\calL}$ denotes the Hochschild--Kostant--Rosenberg map for the Lie algebroid $(\calL,\calO_{X})$, and ${\rm Td}_\calL$ and ${\rm Ch}_{\calL}$
denote the Todd class of $\calL$, resp. the Chern class of $\calE$. a locally free $\calO_{X}$-module.
In the main body of the paper we have similar statements for Hochschild and all variants of cyclic homology, such as negative cyclic homology.
In the special case of the Lie algebroid given by the holomorphic tangent sheaf to a complex manifold $M$, this result is closely related
to the index theorem \cite{bnt} of Bressler--Nest--Tsygan. In fact, they work on the cotangent bundle $T^{*}M$ and their ``trace density map''
to $\Omega^{\bullet}(T^{*}M)$ is related to ours via the pull-back along the projection $\pi:T^{*}M\to M$. Closer to our approach are the methods 
in \cite{ef} to prove a Riemann--Roch--Hirzebruch theorem for differential operators on complex manifolds. In \cite{ramadoss}, these two approaches 
are combined to prove the index theorem in the form of a commutative diagram as stated above in our main theorem.

The results of this paper appear to be related to those of \cite{cvdb,crvdb12} where a ``Duflo-type'' result for Lie algebroids is proved and 
a Todd class appears as the obstruction to the dual HKR map being multiplicative. However, the precise relationship is at present not known.
To wit, the Hochschild (co)chain complexes, as in \cite{crvdb}, underlying these papers are different than those of this paper: from the point of
view of \cite{kp}, they are more related to {\em Hopf} cyclic homology, instead of the ordinary cyclic homology used in this paper. As a result,
the Todd classes also differ as they live in different cohomologies: in this paper they are defined in the Lie algebroid cohomologies, whereas in 
\cite{cvdb,crvdb12} they are elements of the ordinary sheaf cohomologies of the exterior powers of the Lie algebroid. 
 In case of the 
holomorphic tangent sheaf, a relation between the two theories is given in \cite{ramadoss} in connection with the so-called ``Mukai-pairing''. 
For general Lie algebroids, this pairing is exactly what is still missing from \cite{crvdb12}.

The methods we use to prove the main theorem above are inspired from deformation quantization and the algebraic index theorem \cite{ffs,ppt}.
Crucial in this is the notion of a Fedosov resolution \cite{fedosov}. In our approach we construct such a resolution for the universal enveloping algebra using the 
Poincar\'e--Birkhoff--Witt theorem for Lie algebroids. We take a somewhat different approach than the existing literature \cite{rinehart,nwx,lgsx} to the  
PBW-theorem using the {\em dual} map for the algebra of formal jets on the Lie algebroid. Our approach has the advantage that we get explicit formulae 
for the dependence of the PBW-map on the choice of a connection, and these are crucial in our construction of the character map. 
With the Fedosov resolution, we follow the lines of the proof of the algebraic index theorem in \cite{ffs,ppt,willwacher}, using the fundamental
cyclic cocycle on the Weyl algebra and the index theorem $ii)$ above follows from the local Riemann--Roch theorem for this cocycle.

The structure of this paper is as follows: in Section \ref{sla} we quickly review the theory of Lie algebroids that we need for this paper.
Section \ref{spbw} is devoted to the PBW theorem. As mentioned above, underlying our approach is actually the dual version and in this section we
actually give four versions of this theorem: the dual version for jets, ordinary PBW for the universal enveloping algebra, a ``noncommutative version'' for 
the tensor product of the two, and finally a version with coefficients twisted by a locally free sheaf. In section \ref{sc} we use these PBW-maps to 
construct the Fedosov resolution for the Weyl algebra and construct the character map. Since we use the sheaf version of Lie algebroids, 
we have to use a \v{C}ech-resolution in the construction, and we show that the character map is canonically defined, i.e., independent of choices, in the derived category. 
This settles the first part $i)$ of the main theorem. Finally, in Section \ref{sit} we prove part $ii)$: the index theorem. Because of our use of the \v{C}ech resolution, 
in comparison to the usual algebraic index theorem, we have to evaluate more terms when applying the local Riemann--Roch theorem for the fundamental cocycle on
the Weyl algebra. These extra terms are all related to the derivative of the PBW-map. We end the paper with a discussion of the case of a holomorphic Lie algebroid, 
where, instead of a \v{C}ech resolution, one can use a Dolbeault-type resolution.

\section{Lie algebroids}
\label{sla}
In this section we briefly recall the most important concepts and definitions in the theory of
Lie algebroids, such as their modules and cohomology.

\subsection{Lie--Rinehart algebras}
We start by recalling the notion of a {\em Lie--Rinehart algebra}: this is a pair $(R,L)$ with $R$ a
commutative algebra (over a field $\gf$ of characteristic zero) and an $R$-module $L$ which carries an additional $\gf$-Lie algebra
structure. Furthermore, $L$ acts on $R$ via a map of Lie algebras
$\rho:L\to {\rm Der}_k(R)$ (called the anchor) such that the Leibniz identity holds:
\[
[X_1,rX_2]=r[X_1,X_2]+\rho(X_1)(r)X_2, \quad \mbox{for all}~X_1,X_2\in L,~r\in R.
\]
A morphism $f:(R,L)\rightarrow (S,K)$ of Lie--Rinehart algebras consists of two morphisms $f_1:R\rightarrow S$, $f_2:L\rightarrow K$
respecting the Lie--Rinehart structures, e.g. $\rho_2(f_2(X))f_1(r)=f_1(\rho_1(X)(r))$.

Given an $R$-module $M$, an $L$-connection is given by a $\mathbb{K}$-linear map
$\nabla:M
\to
{\rm Hom}_R(L,M)$,
written as $\nabla_l:M\to M$, that satisfies the {\em Leibniz identity}
\[
\nabla_X(rm)=r\nabla_X(m)+\rho(X)(r)m,\quad r\in R,~X\in L,~m\in M.
\]
Many of the constructions in this paper depend on the possible choice of a connection.
The following Lemma is probably well-know, we decided to give a short proof for completeness:
\begin{lemma}\label{proj-conn}
Let $(L,R)$ be a Lie--Rinehart algebra with $R$ unital. Then a projective $R$-module admits an $L$-connection.
\end{lemma}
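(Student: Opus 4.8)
The plan is to reduce the statement to the well-known case of free modules by the usual ``summand-of-a-free-module'' trick, being careful about how connections behave under direct sums and retracts. First I would recall that if $M$ is a free $R$-module with basis $\{e_i\}_{i\in I}$, then the formula $\nabla_X(\sum_i r_i e_i) := \sum_i \rho(X)(r_i)\, e_i$ defines an $L$-connection on $M$: $\mathbb{K}$-linearity in $X$ is clear since $\rho$ is $R$-linear, and the Leibniz identity follows immediately from the Leibniz rule for the derivation $\rho(X)$ on $R$. (This is where unitality of $R$ is used, to make sense of a basis and of the coefficient maps $m\mapsto r_i$.) So every free module admits a connection.

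Next I would show that connections transfer along retracts. Suppose $P$ is a direct summand of a module $M$ carrying an $L$-connection $\nabla^M$, say with $\iota:P\hookrightarrow M$ and $\pi:M\twoheadrightarrow P$ the $R$-linear inclusion and projection with $\pi\iota=\mathrm{id}_P$. Define $\nabla^P_X := \pi\circ\nabla^M_X\circ\iota$. This is $\mathbb{K}$-linear in $X$, and for $r\in R$, $m\in P$ we compute $\nabla^P_X(rm)=\pi\nabla^M_X(\iota(rm))=\pi\nabla^M_X(r\,\iota m)=\pi\big(r\nabla^M_X(\iota m)+\rho(X)(r)\iota m\big)=r\,\pi\nabla^M_X(\iota m)+\rho(X)(r)\,\pi\iota m=r\nabla^P_X(m)+\rho(X)(r)m$, using $R$-linearity of $\pi$ and $\pi\iota=\mathrm{id}$. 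Hence $\nabla^P$ is an $L$-connection on $P$.

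Finally I would combine the two steps: a projective $R$-module $P$ is by definition a direct summand of a free module $F$ (here again unitality of $R$ is what makes ``projective $=$ summand of free'' available). Equip $F$ with the connection from the first step, then apply the retract construction of the second step to obtain a connection on $P$. This completes the proof.

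There is essentially no hard obstacle here; the only points that require a moment's care are bookkeeping of which maps are $R$-linear versus merely $\mathbb{K}$-linear (the anchor-twisted Leibniz term is what prevents $\nabla_X$ itself from being $R$-linear, so one must push the $R$-linear splitting maps through by hand rather than invoke $\Hom_R$-functoriality), and noting that an arbitrary — not necessarily finitely generated — projective module is still a retract of a free module, so the argument applies verbatim without finiteness hypotheses.
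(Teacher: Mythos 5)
Your proof is correct: the connection on a free module, the transfer along an $R$-linear retract, and the reduction of the projective case to these two steps are all carried out accurately, and you are right that no finiteness hypothesis is needed. The paper's proof is a basis-free packaging of the same retract principle, but with a different ambient module: instead of embedding $M$ as a summand of a free module, it applies $\Hom_R(M,-)$ to the canonical surjection $R\otimes_{\mathbb{K}} M\to M$ and uses projectivity to produce an $R$-linear splitting $s:M\to R\otimes_{\mathbb{K}} M$; the module $R\otimes_{\mathbb{K}} M$ carries the evident connection $r\otimes m\mapsto \rho(X)(r)\otimes m$, and the paper's formula $\nabla_X(m)=X(s(m)_{(1)})s(m)_{(2)}$ is exactly your $\pi\circ\nabla_X\circ\iota$ with $\iota=s$ and $\pi$ the action map. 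What the paper's route buys is that it never chooses a basis or a free presentation, so the resulting connection is determined by the single choice of splitting $s$ and the argument sheafifies more cleanly; what your route buys is transparency, since the connection on a free module and the behaviour of connections under retracts are each standard facts worth recording separately. One tiny point to make explicit in your write-up: a connection is required to be $R$-linear in the $L$-argument (the paper defines $\nabla$ as landing in $\Hom_R(L,M)$), so you should note that $\nabla_{rX}=r\nabla_X$ holds for your free-module formula because the anchor $\rho$ is $R$-linear, and that this property survives the composition with the $R$-linear maps $\pi$ and $\iota$; this is immediate, but it is part of the definition being verified.
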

\begin{proof}
Because $R$ is unital, the sequence of $R$-modules
\[
R\otimes_\gf M\longrightarrow M\longrightarrow 0,
\]
where the $R$ action on $R\otimes_\gf M$ is multiplication in $R$ and the first arrow denotes the $R$-module action on $M$,
is exact. Applying ${\rm Hom}_R(M,-)$, the sequence
\[
{\rm Hom}_R(M,R\otimes_\gf M)\longrightarrow {\rm Hom}_R(M,M)\longrightarrow 0
\]
is exact by the fact that $M$ is projective over $R$. Therefore there exists an element
$s\in {\rm Hom}_R(M,R\otimes_\gf M)$ which projects to ${\rm id}_M\in {\rm Hom}_R(M,M)$.
For $m\in M$, we write $s(m)=s(m)_{(1)}\otimes s(m)_{(2)}\in R\otimes_\gf M$ in obvious Sweedler-like
notation. With this we can define
\[
\nabla_X(m):=X(s(m)_{(1)})s(m)_{(2)},\quad X\in L, m\in M.
\]
Its is easy to check that the Leibniz identity holds true:
\begin{align*}
\nabla_X(rm)&=X(rs(m)_{(1)})s(m)_{(2)}\\
&=X(r)s(m)_{(1)}s(m)_{(2)}+rX(s(m)_{(1)})s(m)_{(2)}\\
&=X(r)m+\nabla_X(m).
\end{align*}
So $\nabla$ as defined above indeed defines an $L$-connection.
\end{proof}
The curvature $R(\nabla)\in \End_{R}(M)\otimes_R\bigwedge^2_RL^\vee$ of a connection $\nabla$ on $M$ is given by 
\begin{equation}
\label{curv-conn}
R(\nabla)(X,Y):=[\nabla_{X},\nabla_{Y}]-\nabla_{[X,Y]},\quad \mbox{for all}~X,Y \in L.
\end{equation}
When the connection is flat, i.e., $R(\nabla)=0$, 
we call the pair $(M,\nabla)$ a {\em module} over the Lie--Rinehart algebra $(L,R)$. For such a
module we can define the associated de Rham complex by
\[
\Omega^\bullet_L(M):=M\otimes_{R}\bigwedge^\bullet_R L^*,
\]
with differential $\nabla:\Omega^p_L(M)\to \Omega^{p+1}_L(M)$ defined by the Koszul formula
\begin{align}
\label{Koszul}
\nabla \omega(X_0,\ldots,X_p)&= \sum_{i=0}^p(-1)^i\nabla_{X_i}\big(\omega(X_0,\ldots,\hat{X}_i,\ldots, X_p)\big)\\
&\quad+\sum_{i<j}(-1)^{i+j}\omega([X_i,X_j],l_0,\ldots,\hat{X}_i,\ldots,\hat{X}_j,\ldots,X_p),\nonumber
\end{align}
where the $~\hat{}~$ means omission of the argument from a linear map. One checks that
$\nabla^2=0$, the resulting complex is called the Chevalley--Eilenberg (or de Rham) complex of the $(L,R)$-module $M$. 
Its cohomology, denoted as $H^\bullet(L;M)$, is called the {\em Lie--Rinehart} cohomology of the $(L,R)$-module $M$.

\subsection{Lie algebroids}
In this paper we are interested in Lie--Rinehart algebras arising in geometry, which are given by
sheaves of Lie--Rinehart algebras over ringed spaces $(X,\calO_X)$.
\begin{definition}\label{def-la}
A Lie algebroid over $(X,\calO_X)$ is a sheaf of modules such that $(\calL(U),\calO_X(U))$
is a Lie--Rinehart pair for each open $U\subset X$. 
Moreover, the restriction morphisms are required to be morphisms of Lie--Rinehart pairs.
\end{definition}
In the following we assume that $\calL$ is locally free and of constant rank $r$.

For a Lie algebroid $(\calL,\calO_{X})$ as defined above, the global sections $(\calL(X),\calO_{X}(X))$ form a 
Lie--Rinehart algebra.
Conversely, given a Lie Rinehart algebra $(L,R)$ with $L$ projective over $R$, the induced sheaf $\calL$ over $X={\rm Spec}(R)$
is locally free and defines a Lie algebroid which admits a connection.
For a general Lie algebroid, $\calL$ need not be projective in the category of sheaves of $\calO_X$-modules, need not to admit a connection.

Let $\calE$ be a locally free $\calO_{X}$-module. The notion of a connection for a Lie-Rinehart algebra readily generalizes to
that of an $\calL$-connection $\nabla^{\calE}$ on $\calE$ by requiring the underlying map to be a morphism of sheaves. When such a connection
is flat, we say that $(\calE,\nabla^{\calE})$ is an $\calL$-module. In this case the connection extends to a differential of degree $1$ on 
$\Omega^{\bullet}_{\calL}(\calE):=\bigwedge^{\bullet}_{\calO_{X}}\calL^{\vee}\otimes_{\calO_{X}}\calE$. The hypercohomology of this complex of sheaves
defines the Lie algebroid cohomology of $\calL$ with values in $\calE$; 
\[
H^{\bullet}(\calL;\calE):=\mathbb{H}(\Omega^{\bullet}_{\calL}(\calE),\nabla^{\calE}).
\]
In this paper we shall mostly use \v{C}ech-resolutions to compute such cohomology groups.
\begin{example}
Let us briefly mention some examples.
\begin{itemize}
\item[$i)$] A Lie algebra is a Lie algebroid over a point.
\item[$ii)$] The tangent sheaf over a smooth manifold $X$ is a Lie algebroid with anchor equal to the identity. When $X$ is a complex manifold, 
we can consider the holomorphic tangent bundle $\calT_X$ as a Lie algebroid over the structure sheaf $\calO_X$ of germs of holomorphic functions.
For these Lie algebroids, the universal enveloping algebra (see below) is given by the sheaf of (holomorphic) differential operators.
Sometimes, we can even restrict the structure sheaf even more: for example, on $\C^n$, the Lie algebroid of polynomials and polynomial vector
fields has the Weyl algebra (c.f.\ Appendix \ref{weyl}) as its universal enveloping algebra
\item[$iii)$] A foliation on a manifold determines a Lie algebroid by means of the sheaf of vector fields tangent to the foliation. Again we can also 
consider this in the holomorphic category. Its universal enveloping algebra is given by the differential operators along the leaves.
\item[$iv)$] Let $D\subset X$ be a hypersurface in a complex manifold. The sheaf $\calT_X(-\log D)$ of vector fields tangent to $D$
defines a Lie algebroid. 
\item[$v)$] Let $E\to X$ be a holomorphic vector bundle over a complex manifold. The {\em Atiyah algebroid} consists of the subsheaf of
$\Diff_X^{\leq 1}(E)$, the differential operators of order $\leq 1$ on $E$, with scalar symbol.  When $E$ is a line bundle, all differential operators have 
scalar symbol, and the universal enveloping algebra of the Atiyah algebroid are the so-called twisted differential operators.
\end{itemize}
\end{example}

\subsection{The universal enveloping algebra}
In this subsection we will work, for simplicity, with a Lie--Rinehart algebra $(R,L)$. All the constructions can easily be generalized in the 
sheaf setting of a Lie algebroid. To a pair $(R,L)$ one can associate its universal enveloping algebra $\calU(L,R)$,
constructed as follows: consider the direct sum $R\oplus L$ and equip this with the Lie bracket
\[
[(r_1,X_1),(r_2,X_2)]:=(\rho(X_1)(r_2)-\rho(X_2)(r_1),[X_1,X_2]).
\]
The universal enveloping algebra of this Lie algebra is denoted by $\calU(L\oplus R)$. By definition
it comes equipped with a canonical injection $i:R\oplus L\to \calU(L\oplus R)$ and we write
$\calU(L\oplus R)^+$ for the subalgebra generated by $i(R\oplus L)$. The universal
 enveloping algebra $\calU(L,R)$ of the Lie--Rinehart algebra $(R,L)$ is defined as
\[
\calU(L,R):=\calU(L\oplus R)^+\slash I
\]
where $I$ is the two-sided ideal generated by $i(r_1,0)i(r_2,X)
-i(r_1r_2,r_1X)$
with $r_1,r_2\in R$, $X\in L$. This defines a unital algebra equipped with a $\gf$-algebra morphism $i_R:R\to \calU(L,R)$, as well as a morphism
$i_L:L\to {\rm Lie}(\calU(L,R))$ of $\gf$-Lie algebras, subject to the conditions
\[
i_R(r) i_L(X) = i_L(rX), \qquad i_L(X) i_R(r) - i_R(r) i_L(X) =
i_R(X(r)), \quad r \in R, \ X \in L.
\]
It has the following universal property:
for any triple $(A, \phi_L, \phi_R)$ consisting of a $\gf$-algebra
$A$, a  homomorphisms $\phi_R:R \to A$, and a Lie algebra morphism $\phi_L:L\to{\rm Lie}(A)$, such that for all
$r \in R, \ l \in L$
\[
\phi_R(r) \phi_L(X) = \phi_L(rX), \qquad \phi_L(X) \phi_R(r) -
\phi_R(r) \phi_L(X) = \phi_R(X(r)),
\]
there is a unique morphism $\Phi: \calU(L,R)  \to A$
of $k$-algebras such that $\Phi\circ i_R = \phi_R$ and $\Phi\circ i_L = \phi_L$.
This property shows that the
natural functor
$
\mathsf{Mod}(\calU(L,R))\to\mathsf{Mod}(R,L)
$
is an equivalence of categories. The maps $\Delta: \calU(L,R)\rightarrow \calU(L,R)\otimes_R \calU(L,R)$ and 
$\epsilon: \calU(L,R)\rightarrow R$ defined by
\begin{align*}
\Delta(r)=& r\otimes 1,\quad\quad\quad\quad\quad r\in R\\
\Delta(X)=& X\otimes 1+1\otimes X, \quad\; X\in L\\
\epsilon(D)=& D(1), \quad\quad\quad\quad\quad D\in \calU(L,R)
\end{align*} 
give $\calU(L,R)$ the structure of a cocommutative $R$-coalgebra. For a comprehensive treatment,
consult \cite{kp,crvdb}. We will write $\Delta(D)=D_{(1)}\otimes D_{(2)}$  using Sweedler notation.

\subsection{The Jet algebra}\label{jet}
The Jet algebra is simply defined as the dual of the universal enveloping algebra:
\[
\calJ(L,R):={\rm Hom}_R(\calU(L,R),R),
\]
where $\calU(L,R)$ is considered as an $R$-module via left multiplication.
One can easily dualize the structures on $\calU(L,R)$.
The product on $\calJ(L,R)$ is defined by
\begin{equation}
\label{prod-jets}
(\phi_1\phi_2)(D):=\phi_1(D_{(1)})\phi_2(D_{(2)}),\quad \phi_1,\phi_2\in\calJ(L,R),~D\in\calU(L,R).
\end{equation}
By cocommutativity of $\calU(L,R)$,
this defines a commutative algebra structure on $\calJ(L,R)$ for which the counit $\epsilon$ of
$\calU(L,R)$ is a unit. The left and right $R$-module structures on $\calU(L,R)$ endow $\calJ(L,R)$ with two morphisms $\alpha_i:R\to\calJ(L,R),~i=1,2$ defined as
\[
\begin{split}
\alpha_1(r)(D)&:=\epsilon(rD)=r\epsilon(D),\\
\alpha_2(r)(D)&:=\epsilon(Dr)=D(r), \qquad r \in R, \ D \in \calU(L,R).
\end{split}
\]
One easily checks that we have $(\alpha_1(r)\phi)(D)=r\phi(D)$ and $(\alpha_2(r)\phi)(D)=\phi(Dr)$ for
$\phi\in \calJ(L,R)$, $r\in R$ and $D\in \calU(L,R)$.
Remarkably, there are two flat $L$ connections on $\calJ(L,R)$:
\begin{subequations}
\begin{align}
\label{module1}
\nabla^{(1)}_l\phi(D)&:=\phi(lD)-l(\phi(D))\\
\label{module2}
\nabla^{(2)}_l\phi(D)&:=\phi(Dl),\qquad l\in L,~\phi\in\calJ(L,R),~D\in\calU(L,R).
\end{align}
\end{subequations}
They define, combined with $\alpha_1$ and $\alpha_2$,  two commuting $\calU(L,R)$-module structures on $\calJ(L,R)$,
written $\cdot_{1}$ and $\cdot_{2}$. The second is given by $D\cdot_2\phi(E)=\phi(ED)$.
In the sheaf setting of Lie algebroids $(\calL,\calO_X)$ as in Definition \ref{def-la}, we shall write the two $\calO_X$-module structures
on the sheaf of jets $\calJ(\calL)$ as $\calO_{X_1}$ and $\calO_{X_2}$.

\section{The Poincar\'e--Birkhoff--Witt theorem}
\label{spbw}
Let $(L,R)$ be a Lie--Rinehart algebra with $L$ projective over $R$. The universal enveloping algebra $\calU(L,R)$ carries a canonical increasing filtration
\begin{equation}
\label{filt-un}
0\subset F_0\calU(L,R)\subset F_1\calU(L,R)\subset F_2\calU(L,R)\subset\ldots,
\end{equation}
where $F_0\calU(L,R):=R$ and $F_p\calU(L,R)$ is generated by $i(L)^p$. 
The Poincar\'e--Birkhoff--Witt theorem for Lie--Rinehart algebras, proved in \cite{rinehart}, asserts that the
symmetrization map
\begin{equation}
\label{sym}
X_1\otimes\ldots\otimes X_p\mapsto \frac{1}{p!}\sum_{\sigma\in S_p}X_{\sigma(1)}\cdots X_{\sigma(p)},\quad X_{i}\in L,~i=1,\ldots,p,
\end{equation}
defines an isomorphism of graded algebras
\[
\sym_RL\stackrel{\cong}{\longrightarrow}{\rm Gr}(\calU(L,R)).
\]
Here $\sym_RL$ denotes the symmetric algebra of $L$ over $R$ and it is important to realize
that the symmetrization map is not well-defined with codomain
$\calU(L,R)$ because $i(R)$ and $i(L)$ do not commute in $\calU(L,R)$. 
(For Lie algebras, with $R=\mathbb{K}$ this problem does not occur.)
For Lie algebroids in the $C^{\infty}$ category, this problem was circumvented
in \cite{nwx} by using a (local) groupoid integrating the Lie algebroid and with this the authors
were able to define a map to the universal enveloping algebra inducing the symmetrization map above on
the graded quotient. In this section we shall prove a Poincar\'e--Birkhoff--Witt theorem for Lie algebroids
in similar spirit with the differences that: 1) instead of using an integrating (local) groupoid we use
the {\em formal} integration given by the jet algebra, and 2) we actually construct the --in our opinion-- more fundamental {\em dual}
PBW-map with domain the jet algebra.

\subsection{The PBW theorem for the jet algebra}\label{pbw}
In this section we construct an explicit PBW-map for the jet algebra of a Lie algebroid, depending on the choice of connection. 
Let $\calL$ be a locally free Lie algebroid of rank $r$ over $(X,\calO_X)$. Then $\calU(\calL)$ and $\calJ(\calL)$ are the sheaves
of algebras given by assigning the universal enveloping
algebra and the jet algebra of the respective Lie--Rinehart algebra to each open subset of $X$. 
We assume that the sheaf $\calL$ admits an $\calL$-connection $\nabla^{\calL}$.

Consider the sheaf of symmetric algebras
\[
\widehat{\sym}_{\calO_{X}}\calL^{\vee},
\]
where the notation means that we have formally completed with respect to the degree. 

It is the algebra of jets associated to the Lie algebroid $\calL$ if we put the Lie bracket $[~,~]$ and the anchor map equal to zero.
The $\calL$-connection $\nabla^\calL$ on $\calL$ induces an $\calL$-connection on the sheaf of symmetric algebras, also written as $\nabla^{\calL}$,
and we use this to define its symmetrized version $\nabla^\calL_s$ defined by 
\begin{equation}
\label{symmetrized}
\nabla^{\calL}_s\alpha(X_1\ldots X_n):=\sum_{i=1}^n(\nabla^{\calL}_{X_i}\alpha)(X_1\ldots \widehat{X}_i\ldots X_n).
\end{equation}
This defines a degree $1$ derivation of $\widehat{\sym}(\calL)^{\vee}$. Consider now 
\[
\calJ(\calL)\otimes_{\calO_{X_2}}\widehat{\sym}_{\calO_X}(\calL^{\vee})
\]
equipped with the connection $\nabla^{(2)}\otimes 1+1\otimes \nabla^\calL$. 
The symmetrized version of this connection, defined
by the similar formula to \eqref{symmetrized} above, we write as $D$. Again, it is a derivation of degree $1$ with respect to the 
degree of symmetric tensors in $\widehat{\sym}_{\calO_X}(\calL^{\vee})$.
The PBW-map is now defined as the composition
\[
\calJ(\calL)\stackrel{-\otimes 1}{\longrightarrow}\calJ(\calL)\otimes_{\calO_{X_2}}\widehat{\sym}_{\calO_X}(\calL^{\vee})\stackrel{e^{D}}{\longrightarrow}\calJ(\calL)\otimes_{\calO_{X_2}}\widehat{\sym}_{\calO_X}(\calL^{\vee})
\stackrel{{\rm ev}_{1}}{\longrightarrow} \widehat{\sym}_{\calO_X}(\calL^{\vee}),
\]
where ${\rm ev}_{1}$ means evaluation of a jet at $1\in\calU(\calL)$. Written out this defines a map $j^{\rm PBW}_\nabla :\calJ(\calL)\rightarrow \widehat{\sym}_{\calO_X}(\calL^{\vee})$ given by the formula
\begin{equation}
j^{\rm PBW}_\nabla(\phi):=\sum_{n=0}^\infty\frac{D^{n}\phi}{n!}(1)\label{jnabla}
\end{equation}
\begin{theorem}
\label{dual-pbw}
Let $\calL$ be a locally free sheaf of Lie--Rinehart algebras of constant rank which admits an $\calL$ connection $\nabla^{\calL}$, then
the map
\[
j^{\rm PBW}_\nabla :\calJ(\calL)\rightarrow \widehat{\sym}_{\calO_X}(\calL^{\vee}),
\]
defined in \eqref{jnabla} is an $\calO_{X_{1}}$ linear isomorphism of sheaves of algebras.
Conversely, any such isomorphism defines an $\calL$-connection on $\calL$.
\end{theorem}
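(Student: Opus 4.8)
The plan is to establish the isomorphism locally and then glue, using the explicit formula \eqref{jnabla} to control everything. Since $\calL$ is locally free of rank $r$, I would first work on an open $U$ where $\calL|_U$ is free with frame $e_1,\dots,e_r$ and where the connection $\nabla^\calL$ is given by Christoffel-type symbols. In this setting the jet algebra $\calJ(\calL)(U)$ is, via the PBW theorem for Lie--Rinehart algebras recalled in \eqref{sym}, a free $\calO_X(U)$-module with a topological basis dual to the ordered monomials $e_1^{k_1}\cdots e_r^{k_r}$, and $\widehat{\sym}_{\calO_X}(\calL^\vee)(U)$ has the evident basis of dual monomials. The first key step is to compute the effect of $j^{\rm PBW}_\nabla$ on the filtration: the operator $D$ raises symmetric-tensor degree by exactly one, and $ev_1\circ e^D\circ(-\otimes 1)$ sends a jet vanishing to order $n$ on the filtration $F_n\calU(\calL)$ to something vanishing to order $n$ in $\widehat{\sym}$. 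Hence $j^{\rm PBW}_\nabla$ is filtered, and I would check that the associated graded map is, up to the combinatorial normalization coming from the symmetrization $\frac{1}{p!}\sum_\sigma$, the identity on $\sym_{\calO_X}\calL^\vee$. A filtered map inducing an isomorphism on the associated graded of a complete filtered module is an isomorphism; this gives bijectivity on each $U$, and since the construction is manifestly natural in $U$ (it only uses $\nabla^\calL$, $\nabla^{(2)}$ and the counit, all compatible with restriction), the local isomorphisms glue to an isomorphism of sheaves.

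The second key step is $\calO_{X_1}$-linearity and multiplicativity. For $\calO_{X_1}$-linearity one uses that $\alpha_1(r)\phi(D)=r\phi(D)$ and that the connection $\nabla^{(2)}$ is built from the \emph{right} $R$-module structure, so it commutes with multiplication by $\alpha_1(r)$; therefore each $D^n$ is $\calO_{X_1}$-linear and so is the sum \eqref{jnabla}. For multiplicativity I would exploit the coalgebra structure: the product on $\calJ(\calL)$ is dual to the coproduct $\Delta$ on $\calU(\calL)$ via \eqref{prod-jets}, the product on $\widehat{\sym}_{\calO_X}(\calL^\vee)$ is dual to the (zero-bracket) coproduct, and $D$ is a coderivation for $\Delta$ — this is exactly why $D$ was defined as the \emph{symmetrized} connection, so that $e^D$ is a morphism of coalgebras and $j^{\rm PBW}_\nabla=ev_1\circ e^D\circ(-\otimes1)$ intertwines the two products. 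I expect this compatibility-of-coproducts verification to be the main obstacle: it requires knowing precisely how $\nabla^{(2)}\otimes 1 + 1\otimes\nabla^\calL$ interacts with $\Delta\otimes 1$ on $\calJ(\calL)\otimes_{\calO_{X_2}}\widehat{\sym}(\calL^\vee)$, and the Leibniz/Koszul bookkeeping there is the delicate point; everything else is formal.

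For the converse, suppose $j:\calJ(\calL)\to\widehat{\sym}_{\calO_X}(\calL^\vee)$ is \emph{any} $\calO_{X_1}$-linear isomorphism of sheaves of algebras. The idea is to transport the connection $\nabla^{(2)}$ through $j$ and extract its ``Christoffel part''. Concretely, $j\circ\nabla^{(2)}\circ j^{-1}$ is a flat $\calL$-connection on $\widehat{\sym}_{\calO_X}(\calL^\vee)$ which, because $j$ is $\calO_{X_1}$-linear, is $\calO_X$-linear for the \emph{left} module structure; restricting this connection to the degree-$1$ part $\calL^\vee\subset\widehat{\sym}_{\calO_X}(\calL^\vee)$ and looking at its degree-$(+1)$ component in symmetric degree, one reads off a map $\calL\to\operatorname{Hom}_{\calO_X}(\calL,\calL)$, i.e. exactly the data of an $\calL$-connection $\nabla^\calL$ on $\calL$ (the difference from $\nabla^{(2)}$ being tensorial, hence a genuine connection rather than a mere derivation). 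One then checks that feeding this $\nabla^\calL$ back into \eqref{jnabla} reproduces $j$, at least up to the degree-zero ambiguity fixed by the unit, so the two constructions are mutually inverse. I would present this last part briefly, since the substance is in the forward direction.
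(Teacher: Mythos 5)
Your proposal is correct and follows essentially the same route as the paper: a filtered map between spaces complete with respect to the $I$-adic filtrations, reduction to the associated graded where the map becomes the (dual of the) symmetrization as in \eqref{graded-pbw}, and $\calO_{X_1}$-linearity from the commutation of $\nabla^{(2)}$ with $\alpha_1$. The one place you overcomplicate things is multiplicativity: the verification you flag as ``the main obstacle'' dissolves if you argue on the product side rather than the coproduct side --- $\nabla^{(2)}_l$ is a derivation of the commutative product \eqref{prod-jets} because $\Delta(Dl)=D_{(1)}l\otimes D_{(2)}+D_{(1)}\otimes D_{(2)}l$, and $\nabla^{\calL}$ is a derivation of $\widehat{\sym}(\calL^{\vee})$, so $D$ is a derivation of the tensor-product algebra, $e^{D}$ is an algebra automorphism, and $-\otimes 1$ and ${\rm ev}_1$ are algebra maps, with no coalgebra bookkeeping required. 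Your sketch of the converse (extracting the Christoffel part of $j\circ\nabla^{(2)}\circ j^{-1}$ in symmetric degree one) is a welcome addition, since the paper's proof does not address that direction at all.
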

\begin{proof}
The $\calO_{X_1}$-linearity follows from fact that the actions of $\nabla^{(2)}$ and $\alpha_1$ commute on $\calJ(\calL)$. 
To see that it is a morphism of algebras one observes that $-\otimes 1$, $ev_1$ and the exponent of a degree 1 derivation
on an algebra all have this property. Now we only have to check that the map is an isomorphism, which we do locally. 
Both $\calJ(\calL)$ and $\widehat{\sym}(\calL^\vee)$ carry a natural descending filtration dual to the ascending ones
on $\calU(\calL)$ respectively $\sym(\calL)$, see \eqref{filt-un}. Since $\calJ(\calL)$ and $\widehat{\sym}(\calL^\vee)$  
are complete with respect to those filtrations, it is enough to show that the map is an isomorphism 
on the associated graded spaces. Let $\phi\in I^k$, i.e. $\phi(D)=0$ for $D\in F_j\calU(L,R)$ where $j<k$. Observing that 
$\nabla^{(2)}_X: I^k\rightarrow I^{k-1}$, it follows quite easily that 
\begin{equation}\label{graded-pbw}
j_\nabla^{\rm PBW}(\phi)(X_1\ldots X_k)=\frac{1}{k!}\sum_{\sigma\in S_k}\phi(X_{\sigma(1)}\ldots X_{\sigma(k)}),
\end{equation}
which proves the theorem.
\end{proof}

\subsection{The PBW-theorem}
The usual PBW-theorem for the universal enveloping algebra $\calU(\calL)$ of a Lie algebroid is obtained by duality:
\begin{corollary}
\label{upbw}
Let $\calL$ be as in the previous theorem, then
the map $\rho^*_\nabla :\sym(\calL)\rightarrow \calU(\calL)$ defined by
\[
\phi(j^*_\nabla(\alpha))=\left<\alpha,j_\nabla(\phi)\right>,\quad \alpha\in \sym(\calL),~\phi\in\calJ(\calL),
\]
is an isomorphism of filtered comonoids in the category of $\calO_X$-modules. Furthermore, the induced map
on the graded quotient is the symmetrization map \eqref{sym}.
\end{corollary}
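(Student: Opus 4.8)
The plan is to obtain Corollary \ref{upbw} purely by dualizing Theorem \ref{dual-pbw}, so that almost no new computation is needed. First I would set up the duality carefully: $\calJ(\calL) = \Hom_{\calO_{X_1}}(\calU(\calL),\calO_X)$ by definition, and similarly $\widehat{\sym}_{\calO_X}(\calL^\vee) = \Hom_{\calO_X}(\sym_{\calO_X}(\calL),\calO_X)$ after completion (this is where local freeness and constant rank $r$ are used: $\calL^\vee$ is then genuinely dual to $\calL$, and $\widehat{\sym}(\calL^\vee)$ is the full linear dual of $\sym(\calL)$). Both $\calU(\calL)$ and $\sym(\calL)$ are filtered $\calO_X$-modules with locally free graded pieces of finite rank, hence reflexive with respect to these completed duals. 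Therefore the $\calO_{X_1}$-linear isomorphism of sheaves of algebras $j^{\rm PBW}_\nabla$ of Theorem \ref{dual-pbw} has a well-defined transpose, and this transpose is exactly the map $j^*_\nabla=\rho^*_\nabla:\sym(\calL)\to\calU(\calL)$ characterized by the pairing formula $\phi(j^*_\nabla(\alpha)) = \langle\alpha, j^{\rm PBW}_\nabla(\phi)\rangle$ in the statement. Being the transpose of an isomorphism between reflexive objects, it is itself an isomorphism of $\calO_X$-modules.

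Next I would translate each structure. The algebra structure on $\calJ(\calL)$ was defined (\eqref{prod-jets}) as the transpose of the coproduct $\Delta$ on $\calU(\calL)$; dually, the coalgebra structure on $\widehat{\sym}_{\calO_X}(\calL^\vee)$ is transpose to the (commutative) product on $\sym(\calL)$. Since $j^{\rm PBW}_\nabla$ is an algebra map for the product on jets, its transpose $\rho^*_\nabla$ intertwines the coproduct on $\sym(\calL)$ with the coproduct $\Delta$ on $\calU(\calL)$ — i.e. it is a morphism of comonoids in $\calO_X$-modules. Compatibility with the filtrations is immediate from the observation already made in the proof of Theorem \ref{dual-pbw}: $j^{\rm PBW}_\nabla$ respects the descending filtrations dual to \eqref{filt-un}, so $\rho^*_\nabla$ sends $\sym^{\leq k}(\calL)$ into $F_k\calU(\calL)$. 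The counit side is handled the same way, transposing the unit $\epsilon$ of $\calJ(\calL)$.

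Finally, for the statement about the induced map on graded quotients, I would dualize equation \eqref{graded-pbw}. That formula says that on $\Gr^k$ the map $j^{\rm PBW}_\nabla$ is precisely the symmetrization-averaging pairing $\phi\mapsto \big(X_1\cdots X_k\mapsto \tfrac1{k!}\sum_{\sigma\in S_k}\phi(X_{\sigma(1)}\cdots X_{\sigma(k)})\big)$. Transposing this finite-rank $\calO_X$-linear isomorphism of the degree-$k$ pieces gives back exactly the symmetrization map \eqref{sym} on $\Gr^k$, $X_1\otimes\cdots\otimes X_k\mapsto \tfrac1{k!}\sum_{\sigma} X_{\sigma(1)}\cdots X_{\sigma(k)}$, as claimed. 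The main obstacle — really the only subtle point — is the reflexivity/completion bookkeeping: one must check that taking the transpose of the completed isomorphism $j^{\rm PBW}_\nabla$ lands back in $\calU(\calL)$ rather than some completion of it, which follows because $\sym(\calL)$ is a filtered colimit of its finite-rank pieces $\sym^{\leq k}(\calL)$ while $j^{\rm PBW}_\nabla$ is filtered, so its transpose is automatically continuous and defined on the uncompleted $\sym(\calL)$ with values in the uncompleted $\calU(\calL)$. Everything else is a formal diagram-chase through the dualities already recorded in \S\ref{jet} and \S\ref{pbw}.
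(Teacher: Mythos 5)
Your proposal is correct and follows essentially the same route as the paper: the paper likewise deduces the corollary by transposing $j^{\rm PBW}_\nabla$, using the duality $\calU(\calL)\cong{\rm Hom}^{\rm cont}_{\calO_{X_1}}(\calJ(\calL),\calO_X)$ (which it cites from Calaque--Rossi--van den Bergh rather than re-deriving via reflexivity of the locally free graded pieces, as you do), the compatibility of $j_\nabla$ with the filtrations, the fact that an algebra map dualizes to a comonoid map, and formula \eqref{graded-pbw} for the statement about the graded quotient.
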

\begin{proof}
By duality, c.f. \cite[Lemma 5.1]{crvdb}, we have that
\[
\calU(\calL)\cong{\rm Hom}^{\rm cont}_{\calO_{X_1}}(\calJ(\calL),\calO_X),
\]
where the right hand side means taking the $\calO_X$-linear homomorphisms which are continuous
with respect to the $I$-adic topology. Since this filtration is dual to the natural
filtration on $\calU(\calL)$ and since $j_\nabla$ respects the filtrations
the statement follows. The map $j_{\nabla}^*$ respects the comonoid structures because $j_\nabla$ respects the product structures
and the last claim follows from formula \eqref{graded-pbw}.
\end{proof}
\begin{remark}
Our approach to the PBW-theorem via the dual map defined on jets can be related to that of \cite{lgsx} which discusses a more general version 
for inclusions of Lie algebroids. For this, consider $X:=X_{0}\ldots X_{k}\in\sym^{k}(\calL)$ and $\phi\in J(\calL)$, and write out the PBW-formula of Corollary \ref{upbw} above:
\begin{align*}
\phi(j^{PBW}_{\nabla}(X))&=\left<j^{PBW}_{\nabla}(\phi),X\right>\\
&=\frac{1}{(k+1)!}\left<D^{k}\phi(1),X\right>\\
&=\frac{1}{(k+1)!}\sum_{i=0}^{k}\left<(\nabla^{(2)}_{X_{i}}+\nabla^{\calL}_{X_{i}})(D^{n}\phi),X_{0}\ldots\hat{X}_{i}\ldots X_{k}\right>(1),
\end{align*}
where the hat means omission from the argument.
Therefore we find the recursive formula
\[
j^{PBW}_{\nabla}(X_{0}\ldots X_{k})=\frac{1}{k+1}\sum_{i=0}^{k}\left(X_{i}\cdot j^{PBW}_{\nabla}(X_{0}\ldots\hat{X}_{i}\ldots X_{k})
-j^{PBW}_{\nabla}(\nabla_{X_{i}}(X_{0}\ldots\hat{X}_{i}\ldots X_{k}))\right).
\]
This is precisely the definition of the PBW-morphism in \cite[Remark 2.3]{lgsx} for the inclusion of the trivial Lie algebroid into $\calL$. 
\end{remark}
\begin{remark}
The construction of the PBW isomorphism depends on the choice of a connection. In general there is of course no 
canonical choice of a such a connection, but an exception is given by the special case of a Lie algebra $\g$: here we
have the adjoint representation $\ad: \g\rightarrow \End(\g)$. It is easy to check that in this case, using induction, the term 
\[
-\frac{1}{k+1}\sum_{i=0}^{k}j^{PBW}_{\ad}(\ad(X_{i})(X_{0}\cdots\hat{X}_{i}\cdots X_{k}))
\]
vanishes for each $k$, hence the PBW isomorphism $\sym(\g)\rightarrow \calU(\g)$ induced by the adjoint representation is the symmetrization map.
\end{remark}

\subsection{The noncommutative PBW-theorem}
In this section we shall describe a noncommutative version of the PBW-theorem for Lie algebroids.
We assume that $\calL$ is a Lie algebroid over a locally ringed space $(X,\calO_X)$. The tensor product
$\calJ(\calL)\otimes_{\calO_{X_2}}\calU(\calL)$ has, using the coproduct on $\calU(\calL)$ together
with the second $\calU(\calL)$-module structure \eqref{module2} on $\calJ(\calL)$, an associative product structure 
given by
\begin{equation}
\label{mult-fd}
(\phi\otimes D)\cdot(\psi\otimes E):=\phi D_{(1)}(\psi)\otimes D_{(2)}E.
\end{equation}
In \cite[Lemma 4.3.2]{cvdb} it is proved that $\calJ(\calL)\otimes_{\calO_{X_2}}\calU(\calL)\cong\Diff_{O_{X_1}}(\calJ(\calL))$ via the following morphism of sheaves of algebras:
\begin{equation}\label{formal-diff}
\begin{split}
 (\phi\otimes D)(\psi):=\phi D(\psi),\quad
\phi,\psi\in\calJ(\calL),~D\in\calU(\calL).
\end{split}
\end{equation}
The noncommutative PBW-theorem is now given by:
\begin{theorem}
\label{ncpbw}
Let $\calL$ be a Lie algebroid over a  locally ringed space $(X,\calO_X)$. Any $\calL$-connection on $\calL$
induces an isomorphism of sheaves of algebras
\[
\tilde{j}_\nabla:\calJ(\calL)\otimes_{\calO_{X_2}}\calU(\calL)\stackrel{\cong}{\longrightarrow} \Diff_{\calO_X}(\widehat{\sym}(\calL^{\vee}))
\]
\end{theorem}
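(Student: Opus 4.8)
The plan is to obtain $\tilde{j}_\nabla$ as a composition of two isomorphisms of sheaves of algebras: the identification $\calJ(\calL)\otimes_{\calO_{X_2}}\calU(\calL)\cong\Diff_{\calO_{X_1}}(\calJ(\calL))$ of \eqref{formal-diff}, recalled above from \cite[Lemma 4.3.2]{cvdb}, followed by conjugation by the dual PBW isomorphism $j^{\rm PBW}_\nabla$ of Theorem \ref{dual-pbw}; so nothing new has to be proved beyond assembling these two inputs correctly. First I would record the precise output of Theorem \ref{dual-pbw}: the map $j^{\rm PBW}_\nabla\colon\calJ(\calL)\to\widehat{\sym}_{\calO_X}(\calL^{\vee})$ is an isomorphism of sheaves of $\calO_X$-algebras when the source is given the first module structure $\calO_{X_1}$ and the target its tautological $\calO_X$-algebra structure. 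Conjugation $T\mapsto j^{\rm PBW}_\nabla\circ T\circ(j^{\rm PBW}_\nabla)^{-1}$ is then an isomorphism of sheaves of $\calO_X$-algebras $\End_{\calO_{X_1}}(\calJ(\calL))\stackrel{\cong}{\longrightarrow}\End_{\calO_X}(\widehat{\sym}(\calL^{\vee}))$ carrying multiplication by $\phi\in\calJ(\calL)$ to multiplication by $j^{\rm PBW}_\nabla(\phi)$; since the Grothendieck filtration by order of a differential operator is defined purely in terms of iterated commutators with multiplication operators, it is preserved, and the conjugation restricts to an isomorphism of sheaves of filtered algebras
\[
\Diff_{\calO_{X_1}}(\calJ(\calL))\stackrel{\cong}{\longrightarrow}\Diff_{\calO_X}(\widehat{\sym}(\calL^{\vee})).
\]
The same remark shows compatibility with completions, since differential operators are automatically continuous for the order filtration, both $\calJ(\calL)$ and $\widehat{\sym}(\calL^{\vee})$ are complete, and $j^{\rm PBW}_\nabla$ is a filtered isomorphism by the proof of Theorem \ref{dual-pbw}.

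I would then \emph{define} $\tilde{j}_\nabla$ as the composite of \eqref{formal-diff} with this conjugation, so that $\tilde{j}_\nabla(\phi\otimes D)$ is the operator
\[
\alpha\longmapsto j^{\rm PBW}_\nabla\Big(\phi\cdot D\big((j^{\rm PBW}_\nabla)^{-1}(\alpha)\big)\Big),\qquad\alpha\in\widehat{\sym}_{\calO_X}(\calL^{\vee}),
\]
i.e. the conjugate by $j^{\rm PBW}_\nabla$ of the operator $\phi\otimes D$ of \eqref{formal-diff}. Being a composition of two isomorphisms of sheaves of algebras, $\tilde{j}_\nabla$ is itself an isomorphism of sheaves of algebras; in particular there is no need to re-verify by hand that it intertwines the product \eqref{mult-fd} with composition of operators, this being exactly \cite[Lemma 4.3.2]{cvdb} together with the multiplicativity of conjugation. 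If one preferred a self-contained argument, the substantive part would be re-proving loc.\ cit., namely that \eqref{formal-diff} sends \eqref{mult-fd} to composition and has image precisely $\Diff_{\calO_{X_1}}(\calJ(\calL))$, using the defining relations of $\calU(\calL)$ and its coproduct; that is where any real work lies, but it is already available.

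The one point demanding care — the genuine, if modest, subtlety — is the bookkeeping of the two $\calO_X$-module structures $\calO_{X_1}$ and $\calO_{X_2}$ on $\calJ(\calL)$. The tensor product and the action \eqref{formal-diff} involve the second structure through $\nabla^{(2)}$ and the right $\calU(\calL)$-action \eqref{module2}, whereas $j^{\rm PBW}_\nabla$ is $\calO_{X_1}$-linear but \emph{not} $\calO_{X_2}$-linear. What makes the argument go through is that the target $\Diff_{\calO_X}(\widehat{\sym}(\calL^{\vee}))$ carries only a single $\calO_X$-structure, so only the $\calO_{X_1}$-linearity supplied by Theorem \ref{dual-pbw} is needed to transport $\Diff_{\calO_{X_1}}$ to $\Diff_{\calO_X}$; once this distinction is kept straight, no computation beyond the cited lemma and Theorem \ref{dual-pbw} is required.
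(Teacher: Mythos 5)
Your proposal is correct and follows essentially the same route as the paper: the paper's proof also takes the identification $\calJ(\calL)\otimes_{\calO_{X_2}}\calU(\calL)\cong\Diff_{\calO_{X_1}}(\calJ(\calL))$ from \cite[Lemma 4.3.2]{cvdb} and conjugates by the $\calO_{X_1}$-linear isomorphism $j_\nabla$ of Theorem \ref{dual-pbw}, writing the result as $j_\nabla(K)(\alpha)=j_\nabla(K(j_\nabla^{-1}(\alpha)))$. Your additional remarks (that the order filtration is defined by iterated commutators with multiplication operators and hence is preserved under conjugation, and the careful separation of the $\calO_{X_1}$ versus $\calO_{X_2}$ structures) spell out details the paper leaves implicit, but the argument is the same.
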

\begin{proof}
The PBW-Theorem \ref{dual-pbw} gives $\calJ(\calL)\cong \widehat{\sym}
(\calL^\vee)$, and since this map is $\calO_{X_1}$-linear it follows that
\[
j_\nabla: \Diff_{\calO_{X_{1}}}(\calJ(\calL))\stackrel{\cong}{\longrightarrow}\Diff_{\calO_X}(\widehat{\sym}(\calL^\vee).
\]
Explicitly, the map is given by $j_\nabla(K)(\alpha)=j_\nabla(K(j_\nabla^{-1}(\alpha)))$ for $K\in \Diff_{\calO_{X_{1}}}(\calJ(\calL))$
and $\alpha\in \widehat{\sym}(\calL^\vee)$. 
\end{proof}
The sheaf $ \Diff(\widehat{\sym}(\calL^{\vee}))$ of fiberwise differential operators on the sheaf of commutative algebras $\widehat{\sym}(\calL^{\vee})$
is nothing but a sheaf of (formally completed) Weyl algebras. Indeed, the Weyl algebra as described in Appendix \ref{weyl}, is isomorphic to the algebra of polynomials
equipped with the Moyal--Weyl product \eqref{moyal-weyl}. Likewise, we have a canonical isomorphism
\[
\Diff_{\calO_{X}}(\widehat{\sym}(\calL^{\vee}))\cong \widehat{\sym}(\calL^\vee)\otimes_{\calO_{X}}\sym(\calL)=:\calW(\calL),
\]
and remark that on this (partially) completed Weyl algebra, the Moyal--Weyl product \eqref{moyal-weyl} still makes sense, 
since the expansion of the exponential in the formula \eqref{moyal-weyl} is still finite.  
\subsection{The derivative of the PBW-map}
The main advantage of our approach to the PBW-theorem is that with the explicit formula \eqref{jnabla} for the dual PBW-map on the algebra of jets,
we can easily compute its derivative to see in what way it depends on the choice of a connection. Let $\nabla_1$ and $\nabla_2$ be two $\calL$-connections on
$\calL$. We form the family of connections relating the these two: $\nabla_t:=t\nabla_2+(1-t)\nabla_1$ where $t\in [0,1]$. Clearly, this construction induces a family $j^{PBW}_t:=j^{ PBW}_{\nabla_t}$ of $PBW$-isomorphisms.
\begin{proposition}\label{depconn}
Let $\nabla_t=t\nabla_2+(1-t)\nabla_1$. For each $t\in[0,1]$ the element 
\[
\theta_t:=\frac{1-\exp\nabla_t}{\nabla_t}\left(\frac{d\nabla_t}{dt}\right)
\] 
is a derivation of $\widehat{\Sym}(\calL^\vee)$, and we have that 
\[
j^{PBW}_t=\exp(\theta_t)\circ j^{PBW}_0.
\]
\end{proposition}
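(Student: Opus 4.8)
The plan is to differentiate the formula \eqref{jnabla} for $j^{PBW}_t$ in $t$ and recognize the result as an infinitesimal conjugation. First I would recall that $j^{PBW}_t = \mathrm{ev}_1\circ e^{D_t}\circ(-\otimes 1)$, where $D_t$ is the symmetrized total connection built from $\nabla^{(2)}\otimes 1 + 1\otimes\nabla_t$ on $\calJ(\calL)\otimes_{\calO_{X_2}}\widehat{\sym}(\calL^\vee)$. Since $D_t$ depends affinely on $t$ through $\nabla_t = t\nabla_2+(1-t)\nabla_1$, its derivative $\dot D_t := \frac{d}{dt}D_t$ is itself the symmetrized version of $1\otimes(\nabla_2-\nabla_1)$, which is an $\calO_{X_1}$-linear (indeed $\calJ(\calL)$-linear) degree-zero operator on the symmetric factor; in particular it only involves $\dot\nabla_t := \frac{d\nabla_t}{dt} = \nabla_2-\nabla_1 \in \End_{\calO_X}(\calL)\otimes_{\calO_X}\calL^\vee$, which acts as a derivation of $\widehat{\sym}(\calL^\vee)$.

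The key computation is Duhamel's formula: $\frac{d}{dt}e^{D_t} = \int_0^1 e^{sD_t}\,\dot D_t\, e^{(1-s)D_t}\,ds$. I would like to move the $\dot D_t$ past one of the exponentials. The cleanest route is to write $\frac{d}{dt}e^{D_t}\cdot e^{-D_t} = \int_0^1 e^{sD_t}\dot D_t e^{-sD_t}\,ds = \left(\int_0^1 e^{s\,\mathrm{ad}(D_t)}\,ds\right)(\dot D_t) = \frac{e^{\mathrm{ad}(D_t)}-1}{\mathrm{ad}(D_t)}(\dot D_t)$, so that $\frac{d}{dt}e^{D_t} = \Theta_t\circ e^{D_t}$ with $\Theta_t := \frac{e^{\mathrm{ad}(D_t)}-1}{\mathrm{ad}(D_t)}(\dot D_t)$. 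Now I claim $\Theta_t$, after applying $\mathrm{ev}_1$ and composing with $-\otimes 1$, collapses to the stated $\theta_t$ on $\widehat{\sym}(\calL^\vee)$: since $\dot D_t$ acts only on the symmetric factor and $\mathrm{ad}(D_t)$ on $\dot D_t$ reduces, under $\mathrm{ev}_1$, to the action of $\mathrm{ad}(\nabla^\calL_s)$ — the $\nabla^{(2)}$-part drops out upon evaluation at $1$ together with the $\calO_{X_1}$-linearity already exploited in Theorem \ref{dual-pbw} — one is left with $\theta_t = \frac{e^{\nabla_t}-1}{\nabla_t}(\dot\nabla_t)$ acting on $\widehat{\sym}(\calL^\vee)$; the sign in the statement ($1-\exp$ over $\nabla$) is just the bookkeeping of conjugating $j^{PBW}_0$ on the correct side, i.e. $j^{PBW}_t = \exp(\theta_t)\circ j^{PBW}_0$ forces $\theta_t$ to be the logarithmic derivative on the left, which introduces the sign via $\frac{d}{dt}(j^{PBW}_t\circ (j^{PBW}_0)^{-1}) = (\text{der})\circ(j^{PBW}_t\circ(j^{PBW}_0)^{-1})$ and integration of the resulting ODE.

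To finish I would verify two points. That $\theta_t$ is a derivation of $\widehat{\sym}(\calL^\vee)$: this is immediate because $\dot\nabla_t$ is a derivation, $\nabla_t$ acts as a derivation, and $\frac{1-\exp\nabla_t}{\nabla_t}$ is a formal power series in $\nabla_t$ whose effect on a derivation is again a derivation (the space of derivations is stable under $\mathrm{ad}(\nabla_t)$ and under the resulting convergent — degree-wise finite — series). That the conjugation identity integrates correctly: setting $g_t := j^{PBW}_t\circ(j^{PBW}_0)^{-1}$, one shows $\frac{d}{dt}g_t = \theta_t\circ g_t$ from the Duhamel computation above, and since $g_0 = \mathrm{id}$ and everything is filtered-complete (so the ODE has a unique solution given by the time-ordered exponential, which here equals $\exp(\theta_t)$ because $\theta_t$ depends on $t$ in a way compatible with the filtration and the flow is unipotent), we conclude $g_t = \exp(\theta_t)$, i.e. $j^{PBW}_t = \exp(\theta_t)\circ j^{PBW}_0$. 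The main obstacle I anticipate is the bookkeeping in the Duhamel/$\mathrm{ad}$ manipulation — carefully tracking which exponential $\dot D_t$ is pushed through, confirming that $\mathrm{ev}_1$ kills exactly the $\nabla^{(2)}$-contributions so that only $\nabla_t$ (not the full $D_t$) survives in the operator $\frac{1-\exp}{\cdot}$, and getting the sign right — rather than any deep conceptual point.
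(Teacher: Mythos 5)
Your overall strategy is the paper's own: differentiate $j^{PBW}_t=\mathrm{ev}_1\circ e^{D_t}\circ(-\otimes 1)$ via the Duhamel/$\ad$ formula for the derivative of an exponential, identify the resulting left logarithmic derivative with $\theta_t$, and integrate the ODE; the derivation property of $\theta_t$ is also argued exactly as in the paper (stability of derivations under $\ad(\nabla_t)$ and under the degree-wise finite power series).

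The genuine gap is in the final integration step. From $\frac{d}{dt}\bigl(j^{PBW}_t\circ(j^{PBW}_0)^{-1}\bigr)=\theta_t\circ\bigl(j^{PBW}_t\circ(j^{PBW}_0)^{-1}\bigr)$ with a $t$-dependent generator, the unique solution is a \emph{time-ordered} exponential, and your justification that this ``equals $\exp(\theta_t)$ because \ldots\ the flow is unipotent'' is not valid: unipotence and filtration-completeness give existence and uniqueness of the flow, but a path-ordered exponential of a genuinely $t$-varying family is in general not an ordinary exponential. What makes the argument close --- and this is the one non-routine observation in the paper's proof --- is that $\theta_t$ is in fact \emph{independent} of $t$: writing $\gamma=\nabla_2-\nabla_1$, one has $[\nabla_t,\gamma]=[\nabla_1,\gamma]+t\,\ad(\gamma)(\gamma)$, and $\ad(\gamma)(\gamma)=0$ after symmetrization, so $\nabla_t^n\gamma=\nabla_0^n\gamma$ for all $n$ and hence $\theta_t=\theta$ is constant; the constant-coefficient ODE then integrates to an ordinary exponential. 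Without this observation your argument does not reach the stated closed form. (Your identification of the generator --- that the $\nabla^{(2)}$-contribution drops out of $\ad(D_t)(\dot D_t)$ so that only $\ad(\nabla_t)$ acting on $\gamma$ survives --- is asserted at about the same level of detail as in the paper, so I would not count that as an additional gap, though the reason is $\calO_{X_1}$-linearity of $\gamma$ rather than evaluation at $1$.)
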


\begin{proof}
Let  $\gamma:=\nabla_2-\nabla_1$ be the difference of the two connections, it acts as a derivation on $\widehat{\Sym}(\calL^\vee)$.
By induction, one shows that $\nabla_t^n\gamma=[\nabla_t,\nabla_t^{n-1}\gamma]$ is a derivation since the commutator of two
derivations is a derivation. It follows that $\theta\in \text{Der}(\widehat{\Sym}(\calL^\vee))$.\\
Recall from the definition that $j^{PBW}_t={\rm ev}_1\circ \exp(D_t)\circ -\otimes 1$. Note that
\[
D_{t}:=\nabla^{(2)}\otimes 1+1\otimes\nabla_t
\]
is a derivation of degree $1$ on $\calJ(\calL)\otimes_{\calO_{X_2}} \widehat{\Sym}(\calL^\vee)$ with respect to the degree in $\widehat{\Sym}(\calL^\vee)$. 
Then the exponential $\exp D_{t}$ is a group like differential operator on the commutative algebra $\calJ(\calL)\otimes_{\calO_{X_2}} \widehat{\Sym}(\calL^\vee)$. 
The derivative of this map is given by the following well-know formula, which follows from a straightforward computation:
\begin{align*}
\frac{d}{dt} (\exp{D_t})=\frac{\exp{\rm ad}_{D_{t}}-1}{\rm ad_{D_{t}}}\left(\frac{d D_{t}}{dt}\right)\circ \exp D_{t}
\end{align*}
Note that $D_t$ and $d D_t\slash dt$ do not commute in general.
Next, using $d D_t\slash dt=\gamma$ and $[D_{t},\gamma]=[\nabla_t,\gamma]=\nabla_{t}\gamma$, we find that
\[
\frac{d}{dt}j_t^{PBW}=\theta\circ j_t^{PBW}.
\]
Because $\ad(\gamma)(\gamma)=0$ when symmetrized,
$[\nabla_{t},\gamma]=\nabla_0\gamma$,  and therefore $\theta_t$ is independent of $t$. 
This implies $j^{PBW}_t=\exp(t\theta)\circ j^{PBW}_{0}$ because the left and right hand side satisfy the same ODE in $t$.
\end{proof}

\subsection{Twisting by a locally free sheaf}
In this final section on the PBW-theorem, we discuss how to incorporate a locally free sheaf $\calE$ of $\calO_{X}$-modules of constant rank into the theory.
Following \cite{cvdb}, we define the space of order $k$ $\calL$-jets of $\calE$ to be 
\[
\calJ^k(\calL;\calE):=\calJ^{k}_{\calL}\otimes_{\calO_{X_2}}\calE.
\]
The first oder jets fit into a short exact sequence
\[
0\to\calL^\vee\otimes\calE\to \calJ^1(\calL;\calE)\to\calE\to 0,
\]
and a  splitting $\sigma:\calE\to\calJ^1(\calL;\calE)$ of this sequence in the category of sheaves of 
$\calO_{X_{1}}$-modules is equivalent to an $\calL$-connection on $\calE$.
 As for the ordinary jets, we 
denote the full space of infinite jets of $\calE$ by $\calJ(\calL;\calE)$. Twisting by $\calE$ means that $\calJ(\calL;\calE)$ is no longer an algebra.
Formula \eqref{prod-jets} only defines a $\calJ(\calL)$-module structure. 
We define the universal enveloping algebra as
\[
\calU(\calL;\calE):={\rm Hom}^{\rm cont}_{\calO_{X_1}}\left(\calJ(\calL;\calE),\calE\right).
\]
\begin{lemma}
There is a canonical isomorphism of $\calU(\calL)$-modules
\[
\calU(\calL;\calE)\cong\calE\otimes_l\calU(\calL)\otimes_r\calE^{\vee}.
\]
\end{lemma}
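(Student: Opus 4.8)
The plan is to build the isomorphism from two ingredients: the duality $\calU(\calL)\cong\Hom^{\rm cont}_{\calO_{X_1}}(\calJ(\calL),\calO_X)$ recalled from \cite[Lemma 5.1]{crvdb} and already used in the proof of Corollary \ref{upbw}, and the fact that $\calE$, being locally free of finite rank, is dualizable, so that tensoring with $\calE$ or $\calE^\vee$ commutes with $\Hom$ and the natural maps $\calM\otimes_{\calO_X}\calE^\vee\to\Hom_{\calO_X}(\calE,\calM)$ are isomorphisms, compatibly with the $I$-adic topologies (locally $\calE\cong\calO_X^{\oplus n}$, so everything in sight becomes a finite direct sum of copies of $\calJ(\calL)$ or $\calU(\calL)$). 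Starting from $\calU(\calL;\calE)=\Hom^{\rm cont}_{\calO_{X_1}}(\calJ(\calL)\otimes_{\calO_{X_2}}\calE,\calE)$, first I would pull the target $\calE$ out of the Hom to get $\calE\otimes_{\calO_X}\Hom^{\rm cont}_{\calO_{X_1}}(\calJ(\calL)\otimes_{\calO_{X_2}}\calE,\calO_X)$; the $\calO_X$-structure along which $\calE$ is tensored is the one inherited from the target, which under the duality is the \emph{left} $\calO_X$-module structure on $\calU(\calL)$, giving the $\otimes_l$. Then I would move the source $\calE$ through the $\calO_{X_2}$-tensor factor by the tensor--hom adjunction, $\Hom^{\rm cont}_{\calO_{X_1}}(\calJ(\calL)\otimes_{\calO_{X_2}}\calE,\calO_X)\cong\Hom^{\rm cont}_{\calO_{X_1}}(\calJ(\calL),\calO_X)\otimes_{\calO_X}\calE^\vee$, where now the relevant $\calO_X$-structure on $\Hom^{\rm cont}_{\calO_{X_1}}(\calJ(\calL),\calO_X)=\calU(\calL)$ is the one dual to the second structure on $\calJ(\calL)$, i.e. the \emph{right} $\calO_X$-module structure, giving the $\otimes_r$. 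Composing these natural isomorphisms yields $\calU(\calL;\calE)\cong\calE\otimes_l\calU(\calL)\otimes_r\calE^\vee$, canonically and without any choice of connection.

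It then remains to identify the module structures. On the right-hand side $\calU(\calL)$ acts by multiplication in the middle factor (a bimodule structure); on $\calU(\calL;\calE)$ it acts by pre-/post-composition with the two commuting $\calU(\calL)$-module structures $\cdot_1,\cdot_2$ on $\calJ(\calL)$, transported to $\calJ(\calL;\calE)=\calJ(\calL)\otimes_{\calO_{X_2}}\calE$ via the first factor. Compatibility follows by recalling, as in the proof of Corollary \ref{upbw}, that under $\calU(\calL)\cong\Hom^{\rm cont}_{\calO_{X_1}}(\calJ(\calL),\calO_X)$ the $\cdot_1$- and $\cdot_2$-actions on $\calJ(\calL)$ are dual to left and right multiplication in $\calU(\calL)$; tracing an element through the two isomorphisms above then matches the bimodule structures. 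Alternatively one may check the statement locally after trivialising $\calE\cong\calO_X^{\oplus n}$, where it reduces to $\calU(\calL;\calO_X^{\oplus n})\cong\Mat_n(\calU(\calL))$, and then verify independence of the trivialisation; but the adjunction argument has the advantage of producing the canonical map directly rather than after gluing.

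The main obstacle is purely bookkeeping of the two $\calO_X$-module structures: $\calJ(\calL)$ (hence $\calU(\calL)$) carries the structures $\calO_{X_1}$ and $\calO_{X_2}$, and at every step one must ensure that $\Hom^{\rm cont}$ is taken over the first structure while it is the $\calO_{X_2}$-structure — equivalently the right structure on $\calU(\calL)$ — that survives the tensor with $\calE$, so that the decorations $\otimes_l$, $\otimes_r$ come out as stated and not swapped. The only point needing genuine, non-formal input is the duality $\calU(\calL)\cong\Hom^{\rm cont}_{\calO_{X_1}}(\calJ(\calL),\calO_X)$, which is available from \cite{crvdb}; the topology/completion issues are harmless precisely because $\calE$ and $\calE^\vee$ are locally free of finite rank.
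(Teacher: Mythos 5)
Your argument is correct and lands on the same canonical isomorphism as the paper, but the verification strategy is genuinely different. The paper's proof writes down the explicit map
\[
e_1\otimes D\otimes f\longmapsto\bigl(\phi\otimes e_2\mapsto \alpha_2(f(e_2))\,\phi(D)\,e_1\bigr)
\]
on the $\gf$-linear tensor product $\calE\otimes_\gf\calU(\calL)\otimes_\gf\calE^{\vee}$ and then checks by hand that its kernel is exactly the ideal $(re\otimes D\otimes f-e\otimes rD\otimes f,\; e\otimes Dr\otimes f-e\otimes D\otimes rf)$ defining $\otimes_l$ and $\otimes_r$. You instead obtain the map as a composite of standard natural isomorphisms -- pulling the dualizable sheaf $\calE$ out of the target of $\Hom^{\rm cont}_{\calO_{X_1}}$ and then out of the $\calO_{X_2}$-tensor factor in the source via tensor--hom adjunction -- with the duality $\calU(\calL)\cong\Hom^{\rm cont}_{\calO_{X_1}}(\calJ(\calL),\calO_X)$ as the only non-formal input, which is the same input the paper relies on implicitly. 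Your route has the advantage of making surjectivity and the continuity/completion issues transparent (the paper's kernel computation, as written, only addresses injectivity), and your bookkeeping of which $\calO_X$-structure produces $\otimes_l$ versus $\otimes_r$ is carried out correctly. What the paper's formulation buys in exchange is the explicit formula for the isomorphism, which is reused almost verbatim in the subsequent twisted noncommutative PBW theorem; if you adopt your version you would want to unwind the composite once to recover that formula. Your discussion of the $\calU(\calL)$-module structures is also fine and in fact more explicit than the paper's, which does not spell them out.
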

\begin{proof}
First we note that the $l$ and $r$ indicate the left and right $\calO_X$-module structures on $\calU(\calL)$ that are used to define the tensor products.
Then, for $e_1,e_2\in \calE$, $f\in \calE^\vee$, $D\in \calU(\calL)$ and $\phi\in \calJ(\calL)$, the map
\begin{align*}
 \calE\otimes_\gf\calU(\calL)\otimes_\gf\calE^{\vee}&\longrightarrow {\rm Hom}^{\rm cont}_{\calO_{X_1}}\left(\calJ(\calL;\calE),\calE\right),\\
 e_1\otimes D\otimes f&\longmapsto (\phi\otimes e_2\mapsto \alpha_2(f(e_2))\phi(D)e_1)
\end{align*}
is well defined, and moreover has kernel $(re\otimes D\otimes f-e\otimes rD\otimes f,e\otimes Dr\otimes f-e\otimes D\otimes rf)$ which is
the defining ideal for the tensor products, and this establishes the claim.
\end{proof}
Following our approach in the untwisted case, the PBW-isomorphism should first be proved for the sheaf of jets:

\begin{theorem}
Let $\calL$ be a Lie algebroid over a ringed space $(X,\calO_{X})$, and $\calE$ a locally free $\calO_{X}$-module of
constant finite rank. Any pair $(\nabla^{\calL},\nabla^{\calE})$ of $\calL$-connections on $\calL$ and $\calE$ induces
an $\calO_{X_1}$-linear isomorphism of $\calJ(\calL)$-modules
\[
j_{\nabla^{\calL},\nabla^{\calE}}:\calJ(\calL;\calE)\to \widehat{\sym}_{\calO_X}(\calL^\vee)\otimes_{\calO_X}\calE
\]
\end{theorem}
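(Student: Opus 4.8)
The plan is to repeat the proof of Theorem~\ref{dual-pbw} almost verbatim, carrying the locally free sheaf $\calE$ and its connection $\nabla^{\calE}$ along as passive coefficients. First I would promote the second jet connection $\nabla^{(2)}$ of \eqref{module2} to the connection $\nabla^{(2)}\otimes 1+1\otimes\nabla^{\calE}$ on $\calJ(\calL;\calE)=\calJ(\calL)\otimes_{\calO_{X_2}}\calE$, then equip $\calJ(\calL;\calE)\otimes_{\calO_{X_2}}\widehat{\sym}_{\calO_X}(\calL^{\vee})$ with the connection $(\nabla^{(2)}\otimes 1+1\otimes\nabla^{\calE})\otimes 1+1\otimes\nabla^{\calL}$, and symmetrize it in the $\widehat{\sym}(\calL^{\vee})$-degree via the analogue of \eqref{symmetrized} to obtain a degree-one operator $D_{\calE}$. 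In parallel with \eqref{jnabla} I would then define
\[
j_{\nabla^{\calL},\nabla^{\calE}}:={\rm ev}_{1}\circ e^{D_{\calE}}\circ(-\otimes 1),
\]
where ${\rm ev}_{1}$ is evaluation of a jet at $1\in\calU(\calL)$. The exponential makes sense because $D_{\calE}$ raises the $\widehat{\sym}$-degree by one, so each homogeneous component of the output is a finite sum; and ${\rm ev}_{1}$ sends the $\calJ(\calL;\calE)$-factor to $\calE$, so the composite indeed lands in $\widehat{\sym}_{\calO_X}(\calL^{\vee})\otimes_{\calO_X}\calE$.

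The $\calO_{X_1}$-linearity is proved exactly as in Theorem~\ref{dual-pbw}: the action $\alpha_{1}$ commutes with $\nabla^{(2)}$ and does not interact with $\nabla^{\calE}$. For the $\calJ(\calL)$-module structure, the key point --- and the only place where one must be a little careful --- is that $\nabla^{(2)}$ differentiates the product \eqref{prod-jets}, i.e. $\nabla^{(2)}_{l}(\phi\cdot\psi)=(\nabla^{(2)}_{l}\phi)\cdot\psi+\phi\cdot(\nabla^{(2)}_{l}\psi)$ for $\phi\in\calJ(\calL)$, $\psi\in\calJ(\calL;\calE)$; this follows from $\Delta(Dl)=\Delta(D)\Delta(l)$ just as in the verification that \eqref{module2} is a flat connection. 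Granting this, $D_{\calE}$ is a derivation over the derivation $D$ of Theorem~\ref{dual-pbw} (the extra $\nabla^{\calE}$-term is linear over $\calJ(\calL)\otimes\widehat{\sym}(\calL^{\vee})$, since the algebra does not act on the $\calE$-factor), so $e^{D_{\calE}}$ is a morphism of modules over the algebra automorphism $e^{D}$; and $-\otimes 1$ and ${\rm ev}_{1}$ are visibly module morphisms over their untwisted counterparts. Composing, $j_{\nabla^{\calL},\nabla^{\calE}}$ is a morphism of $\calJ(\calL)$-modules, where $\widehat{\sym}(\calL^{\vee})\otimes\calE$ is regarded as a $\calJ(\calL)$-module through the algebra isomorphism $j^{\rm PBW}_{\nabla}$ of Theorem~\ref{dual-pbw}.

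It remains to check bijectivity, which I would do locally using descending filtrations. On $\calJ(\calL;\calE)$ use $I^{k}\otimes_{\calO_{X_2}}\calE$, with $I^{k}$ the ideal of jets vanishing on $F_{k-1}\calU(\calL)$ of \eqref{filt-un}; on $\widehat{\sym}(\calL^{\vee})\otimes\calE$ use the submodule of elements of symmetric degree $\geq k$. Both are complete --- here I use that $\calE$ has finite rank --- so it suffices to show that $j_{\nabla^{\calL},\nabla^{\calE}}$ is an isomorphism on associated graded objects. Since $\nabla^{(2)}_{X}$ lowers the $I$-filtration by one while $\nabla^{\calE}$ preserves it, modulo lower filtration $D_{\calE}$ reduces to the symmetrization of $\nabla^{(2)}\otimes 1$ alone, and the computation \eqref{graded-pbw} carries over unchanged: for $\psi$ vanishing on $F_{k-1}\calU(\calL)$,
\[
j_{\nabla^{\calL},\nabla^{\calE}}(\psi)(X_{1}\cdots X_{k})=\frac{1}{k!}\sum_{\sigma\in S_{k}}\psi(X_{\sigma(1)}\cdots X_{\sigma(k)}),
\]
which is manifestly an isomorphism on each graded piece. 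The main obstacle is thus not conceptual but bookkeeping: keeping straight which of the several tensor factors carries which $\calO_{X}$-module structure and which connection, and verifying the module-derivation property of $\nabla^{(2)}$ in the twisted setting; once that is done, every step mirrors the untwisted proof.
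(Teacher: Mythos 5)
Your proposal is correct and follows essentially the same route as the paper: the paper likewise combines $\nabla^{(2)}$, $\nabla^{\calL}$ and $\nabla^{\calE}$ into a single symmetrized degree-one operator $D$ on $\widehat{\sym}(\calL^{\vee})\otimes_{\calO_{X_2}}\calJ(\calL;\calE)$, defines the twisted PBW map as ${\rm ev}_{1}\circ\exp(D)\circ(1\otimes-)$, derives the $\calJ(\calL)$-module property from the Leibniz rule of $D$ over the untwisted operator, and reduces $\calO_{X_1}$-linearity and bijectivity to the argument of Theorem~\ref{dual-pbw}. Your extra care about the derivation property of $\nabla^{(2)}$ with respect to \eqref{prod-jets} and the filtration-by-filtration check of bijectivity only makes explicit what the paper leaves implicit.
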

\begin{proof}
First we note that by an isomorphism of $\calJ(\calL)$-modules, we mean that
\begin{equation}\label{JL-morph}
j_{\nabla^\calL,\nabla^\calE}(\phi(\psi\otimes e))=j_{\nabla^\calL}(\phi)j_{\nabla^\calL,\nabla^\calE}(\psi\otimes e),\quad\quad \phi\in \calJ(\calL),\quad \psi\otimes e\in \calJ(\calL;\calE).
\end{equation}
The connections $\nabla^\calL, \nabla^\calE,\nabla^{(2)}$ define an connection on 
$\widehat{\sym}(\calL^\vee)\otimes_{\calO_{X_2}}\calJ(\calL;\calE)$
and after symmetrizing moreover a map 
\[
D: \widehat{\sym}(\calL^\vee)\otimes_{\calO_{X_2}}\calJ(\calL;\calE)\rightarrow \widehat{\sym}(\calL^\vee)\otimes_{\calO_{X_2}}\calJ(\calL;\calE)
\]
which has the following Leibniz property:
\[
D(rs)=D(r)s+rD(s),\quad\quad r \in \widehat{\sym}(\calL^\vee)\otimes_{\calO_{X_2}}\calJ(\calL),\quad s\in \widehat{\sym}(\calL^\vee)\otimes_{\calO_{X_2}}\calJ(\calL;\calE).
\]
Here $D$ stands for the operator from \ref{pbw} and the operator
just defined, and the multiplication is the natural one. The PBW-map with coefficients is now defined as the composition
\[
\calJ(\calL;\calE)\stackrel{1\otimes -}{\longrightarrow}\widehat{\sym}(\calL^\vee)\otimes_{\calO_{X_2}}\calJ(\calL;\calE)\stackrel{\exp{D}}{\longrightarrow}\widehat{\sym}(\calL^\vee)\otimes_{\calO_{X_2}}\calJ(\calL;\calE)\stackrel{\text{ev}_1}{\longrightarrow}\widehat{\sym}(\calL^\vee)\otimes\calE
\]
The leibniz property of $D$ gives property \ref{JL-morph}. The $\calO_{X_1}$ linearity and the fact the it is an isomorphism follow in the same way as in the proof of 
\ref{dual-pbw}. 
\end{proof}

\begin{corollary}
Using the notation from the previous theorem, any pair $(\nabla^{\calL},\nabla^{\calE})$ of $\calL$-connections on $\calL$ and $\calE$ induces
an isomorphism
\[
j^{*}_{\nabla^{\calL},\nabla^{\calE}}:\sym_{\calO_X}(\calL)\otimes_{\calO_X}{\rm End}(\calE)
\to \calU(\calL;\calE)\
\]
\end{corollary}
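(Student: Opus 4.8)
\emph{Proof proposal.} The plan is to obtain this by duality from the twisted jet PBW isomorphism of the preceding theorem, in exact parallel with the way Corollary~\ref{upbw} is deduced from Theorem~\ref{dual-pbw}. By definition $\calU(\calL;\calE)={\rm Hom}^{\rm cont}_{\calO_{X_1}}\big(\calJ(\calL;\calE),\calE\big)$. The first step is to substitute the $\calO_{X_1}$-linear isomorphism
\[
j_{\nabla^{\calL},\nabla^{\calE}}:\calJ(\calL;\calE)\xrightarrow{\ \cong\ }\widehat{\sym}_{\calO_X}(\calL^{\vee})\otimes_{\calO_X}\calE ;
\]
since its target carries a single $\calO_X$-module structure, on which the two tensor factors agree, precomposition with $j_{\nabla^{\calL},\nabla^{\calE}}$ yields
\[
\calU(\calL;\calE)\ \cong\ {\rm Hom}^{\rm cont}_{\calO_X}\big(\widehat{\sym}_{\calO_X}(\calL^{\vee})\otimes_{\calO_X}\calE,\ \calE\big).
\]

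Next I would evaluate this Hom-sheaf. By tensor--hom adjunction together with local freeness of $\calE$ — so that ${\rm Hom}_{\calO_X}(\calE,\calE)={\rm End}(\calE)$ is again locally free of finite rank — it equals ${\rm Hom}^{\rm cont}_{\calO_X}\big(\widehat{\sym}_{\calO_X}(\calL^{\vee}),{\rm End}(\calE)\big)$. As in the proof of Corollary~\ref{upbw} (via \cite[Lemma 5.1]{crvdb}), any continuous $\calO_X$-linear map out of the completed symmetric algebra factors through some $\sym^{<k}(\calL^{\vee})$; since $\calL$ is locally free of finite rank and $\gf$ has characteristic zero, $(\sym^{j}(\calL^{\vee}))^\vee\cong\sym^{j}(\calL)$, and passing to the colimit over $k$ gives ${\rm Hom}^{\rm cont}_{\calO_X}(\widehat{\sym}(\calL^{\vee}),\calO_X)\cong\sym_{\calO_X}(\calL)$ via the canonical pairing. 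Tensoring with ${\rm End}(\calE)$ identifies the target with $\sym_{\calO_X}(\calL)\otimes_{\calO_X}{\rm End}(\calE)$, and tracing the composite backwards gives the explicit description
\[
\big(j^{*}_{\nabla^{\calL},\nabla^{\calE}}(\alpha\otimes\xi)\big)(\psi\otimes e)=\xi\Big(\big\langle\alpha,\ j_{\nabla^{\calL},\nabla^{\calE}}(\psi\otimes e)\big\rangle\Big),
\]
for $\alpha\in\sym(\calL)$, $\xi\in{\rm End}(\calE)$ and $\psi\otimes e\in\calJ(\calL;\calE)$, where $\langle\alpha,-\rangle:\widehat{\sym}(\calL^{\vee})\otimes_{\calO_X}\calE\to\calE$ is the natural pairing in the first tensor factor; one checks directly that this map is $\calO_{X_1}$-linear and continuous, using the $\calO_{X_1}$-linearity of $j_{\nabla^{\calL},\nabla^{\calE}}$, the $\calO_X$-bilinearity of the pairing, and the $\calO_X$-linearity of $\xi$.

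Being a composition of isomorphisms, $j^{*}_{\nabla^{\calL},\nabla^{\calE}}$ is an isomorphism of sheaves of $\calO_X$-modules; it is moreover filtered, and as in Corollary~\ref{upbw} the induced map on associated graded sheaves is the symmetrization map \eqref{sym} tensored with the identity of ${\rm End}(\calE)$, while the coalgebra structure on $\calU(\calL;\calE)$ dual to \eqref{prod-jets} is transported along. As a consistency check one may instead combine the canonical decomposition $\calU(\calL;\calE)\cong\calE\otimes_l\calU(\calL)\otimes_r\calE^\vee$ of the Lemma above with Corollary~\ref{upbw}, using that the left and right $\calO_X$-module structures on $\calU(\calL)$ coincide on the associated graded $\sym(\calL)$; this recovers the same underlying module isomorphism, albeit a different, only $\nabla^{\calL}$-dependent, choice of it. The only genuine work — and the step to carry out with care — is the bookkeeping of the various $\calO_X$-module structures: one must verify that the $\calO_{X_1}$-structure on $\calJ(\calL;\calE)$, the $\calO_{X_2}$-structure used to form $\calJ(\calL;\calE)=\calJ(\calL)\otimes_{\calO_{X_2}}\calE$, and the structures entering the pairing all line up so that the identification ${\rm Hom}^{\rm cont}_{\calO_X}(\widehat{\sym}(\calL^{\vee})\otimes\calE,\calE)\cong\sym(\calL)\otimes{\rm End}(\calE)$ is the asserted one. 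The $I$-adic completeness of the jets and the local freeness and finite rank of $\calL$ and $\calE$ are precisely what make the Hom-computation and the duality $\widehat{\sym}(\calL^{\vee})^\vee\cong\sym(\calL)$ legitimate, and beyond that everything is formal.
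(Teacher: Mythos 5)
Your argument is exactly the paper's: the paper's proof consists of the two facts that $\sym(\calL)\otimes{\rm End}(\calE)\cong\Hom^{\rm cont}(\widehat{\sym}(\calL^{\vee})\otimes\calE,\calE)$ and that $j_{\nabla^{\calL},\nabla^{\calE}}$ is $\calO_{X_1}$-linear, which is precisely the skeleton of your proposal. You merely supply the details (tensor--hom adjunction, the duality $\widehat{\sym}(\calL^{\vee})^{\vee}\cong\sym(\calL)$ for continuous maps, and the explicit formula) that the paper leaves implicit, and these are all correct.
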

\begin{proof}
This follows from $\sym(\calL)\otimes{\rm End}(\calE)\cong\Hom^{\rm cont}(\widehat{\sym}(\calL^{\vee})\otimes\calE,\calE)$ and the fact that $j_{\nabla^\calL,\nabla^\calE}$ is $\calO_{X_1}$-linear.
\end{proof}
There is also a noncommutative, twisted PBW-theorem, which we describe now. 
\begin{theorem}
The data of $\calL$ connections $\nabla^\calL$ and $\nabla^\calE$ on $\calL$ and $\calE$ induces an isomorphism of sheaves of algebras:
\[
\calJ(\calL)\otimes_{\calO_{X_2}} \calU(\calL;\calE)\longrightarrow \Diff_{\calO_X}(\widehat{\sym}_{\calO_X}(\calL^\vee)\otimes_{\calO_X}\calE).
\]
\end{theorem}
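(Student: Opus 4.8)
The plan is to mimic the proof of Theorem \ref{ncpbw} (the untwisted noncommutative PBW-theorem), combining the twisted module-level PBW-isomorphism with the identification of the twisted ``formal differential operators'' algebra. First I would recall that, just as $\calJ(\calL)\otimes_{\calO_{X_2}}\calU(\calL)\cong\Diff_{\calO_{X_1}}(\calJ(\calL))$ via \eqref{formal-diff}, there is an analogous statement in the twisted case: the algebra $\calJ(\calL)\otimes_{\calO_{X_2}}\calU(\calL;\calE)$ acts on $\calJ(\calL;\calE)$ by $(\phi\otimes D)(\psi\otimes e):=\phi\cdot D(\psi\otimes e)$, where $D\in\calU(\calL;\calE)$ is viewed as a continuous $\calO_{X_1}$-linear endomorphism of $\calJ(\calL;\calE)$ lifting the $\calJ(\calL)$-module structure, and this gives an isomorphism of sheaves of algebras
\[
\calJ(\calL)\otimes_{\calO_{X_2}}\calU(\calL;\calE)\stackrel{\cong}{\longrightarrow}\Diff_{\calO_{X_1}}(\calJ(\calL;\calE)).
\]
This is essentially \cite[Lemma 4.3.2]{cvdb} applied in the $\calE$-twisted setting, or can be derived from the untwisted statement together with the isomorphism $\calU(\calL;\calE)\cong\calE\otimes_l\calU(\calL)\otimes_r\calE^\vee$ of the Lemma above, tensoring appropriately with $\End(\calE)$; I would state it as a preliminary observation (or cite it).

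The second and main ingredient is the twisted module-level PBW-isomorphism $j_{\nabla^\calL,\nabla^\calE}:\calJ(\calL;\calE)\stackrel{\cong}{\to}\widehat{\sym}_{\calO_X}(\calL^\vee)\otimes_{\calO_X}\calE$ from the theorem just proved, together with the ordinary $j_\nabla:\calJ(\calL)\stackrel{\cong}{\to}\widehat{\sym}_{\calO_X}(\calL^\vee)$ of Theorem \ref{dual-pbw}. The key point established there is that $j_{\nabla^\calL,\nabla^\calE}$ is $\calO_{X_1}$-linear and intertwines the $\calJ(\calL)$-module structures via $j_\nabla$, i.e.\ property \eqref{JL-morph} holds. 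Because $j_{\nabla^\calL,\nabla^\calE}$ is an $\calO_{X_1}$-linear isomorphism of sheaves, conjugation by it transports $\calO_{X_1}$-linear differential operators to $\calO_X$-linear differential operators:
\[
K\longmapsto j_{\nabla^\calL,\nabla^\calE}\circ K\circ j_{\nabla^\calL,\nabla^\calE}^{-1}
\]
defines an isomorphism of sheaves of algebras
\[
\Diff_{\calO_{X_1}}(\calJ(\calL;\calE))\stackrel{\cong}{\longrightarrow}\Diff_{\calO_X}(\widehat{\sym}_{\calO_X}(\calL^\vee)\otimes_{\calO_X}\calE).
\]
(One must check that this conjugation does send $\calO_{X_1}$-linear operators to $\calO_X$-linear ones; this is immediate since $j_{\nabla^\calL,\nabla^\calE}$ is $\calO_{X_1}$-linear and the target $\calO_X$-action corresponds under the isomorphism to the $\calO_{X_1}$-action on the source.) Composing the two displayed isomorphisms yields the desired isomorphism
\[
\calJ(\calL)\otimes_{\calO_{X_2}}\calU(\calL;\calE)\stackrel{\cong}{\longrightarrow}\Diff_{\calO_X}(\widehat{\sym}_{\calO_X}(\calL^\vee)\otimes_{\calO_X}\calE),
\]
and one records the explicit formula: on $\phi\otimes D$ and $\alpha\otimes e\in\widehat{\sym}(\calL^\vee)\otimes\calE$ it is
\[
(\phi\otimes D)\cdot(\alpha\otimes e)=j_\nabla(\phi)\cdot j_{\nabla^\calL,\nabla^\calE}\!\left(D\bigl(j_{\nabla^\calL,\nabla^\calE}^{-1}(\alpha\otimes e)\bigr)\right).
\]

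The main obstacle — and the only step that requires real care rather than bookkeeping — is verifying that the first isomorphism, $\calJ(\calL)\otimes_{\calO_{X_2}}\calU(\calL;\calE)\cong\Diff_{\calO_{X_1}}(\calJ(\calL;\calE))$, genuinely holds as sheaves of algebras, since $\calU(\calL;\calE)$ is now an algebra of twisted (not scalar) differential operators and one must check both well-definedness over the relative tensor product $\otimes_{\calO_{X_2}}$ and surjectivity. The cleanest route is to reduce to the untwisted case \cite[Lemma 4.3.2]{cvdb}: using $\calU(\calL;\calE)\cong\calE\otimes_l\calU(\calL)\otimes_r\calE^\vee$ one gets $\calJ(\calL)\otimes_{\calO_{X_2}}\calU(\calL;\calE)\cong\calE\otimes\bigl(\calJ(\calL)\otimes_{\calO_{X_2}}\calU(\calL)\bigr)\otimes\calE^\vee\cong\calE\otimes\Diff_{\calO_{X_1}}(\calJ(\calL))\otimes\calE^\vee\cong\Diff_{\calO_{X_1}}(\calJ(\calL)\otimes\calE)=\Diff_{\calO_{X_1}}(\calJ(\calL;\calE))$, where one must track that the algebra structures match up — this is a direct check on the formula \eqref{mult-fd}-type multiplication using the coproduct on $\calU(\calL)$ and the $\calJ(\calL)$-module structure on $\calJ(\calL;\calE)$. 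Everything downstream (the conjugation step, the algebra-morphism property, $\calO_X$-linearity of the target) is then formal, exactly as in the proof of Theorem \ref{ncpbw}.
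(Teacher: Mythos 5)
Your proposal is correct and follows essentially the same route as the paper: the paper also factors the argument into (1) an explicit isomorphism $\calJ(\calL)\otimes_{\calO_{X_2}}\calU(\calL;\calE)\cong\Diff_{\calO_{X_1}}(\calJ(\calL;\calE))$, written on elements $\phi\otimes e\otimes D\otimes f$ via the decomposition $\calU(\calL;\calE)\cong\calE\otimes_l\calU(\calL)\otimes_r\calE^{\vee}$ and verified by the same filtration argument as the untwisted case, followed by (2) conjugation by $j_{\nabla^{\calL},\nabla^{\calE}}$. The step you single out as requiring care is exactly the one the paper treats as the substantive part of its proof.
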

\begin{proof}
Similar to the untwisted case, there is a canonical morphism 
\begin{align*}
\calJ(\calL)\otimes_{\calO_{X_2}} \calU(\calL;\calE)&\rightarrow \Diff_{\calO_{X_1}}(\calJ(\calL;\calE))\\
(\phi\otimes e\otimes D\otimes f)&\mapsto (\psi\otimes s\mapsto \phi D\cdot_{2}(\alpha_2(f(s))\psi)\otimes e).
\end{align*}
Where $e,s\in \calE$, $f\in \calE^\vee$, $\phi,\psi\in \calJ(\calL)$, $D\in \calU(\calL)$ and $D$ acts on $\calJ(\calL)$ using $\nabla^{(2)}$. Recall that
differential operators of order $k$ on the $\calJ(\calL)$-module $\calJ(\calL;\calE)$ are endomorphisms of $\calJ(\calL;\calE)$ satisfying $[\phi_0,[\ldots,[\phi_k,d]\ldots]=0$
where $\phi_i\in\calJ(\calL)$ are considered as endomorphisms of $\calJ(\calL;\calE)$. It is straightforward to check that the morphism above is well-defined and has the range that is indicated. 
Moreover, it respects the filtrations from $\calU(\calL)$ and $\calJ(\calL)$ on the LHS and the degree of differential operators and the $\calJ(\calL)$ filtration on the
RHS. The fact that it is an isomorphism follows from the same argument as in the untwisted case. 
Using now that $ \Diff(\widehat{\sym}(\calL^\vee)\otimes \calE)\cong\Diff_{\calO_{X_1}}(\calJ(\calL;\calE))$ by the morphism $D\mapsto j_{\nabla^\calL,\nabla^\calE}\circ D\circ j^{-1}_{\nabla^\calL,\nabla^\calE}$, the claim follows.
\end{proof}

\begin{remark}\label{twisted-remark}
Proposition \ref{depconn} has a straightforward generalization to the twisted case.
\end{remark}

\section{The character map}
\label{sc}
\subsection{The set up}\label{set up}
Cyclic homology of algebras forms a fundamental part of noncommutative geometry. We have summarized the basic definitions and complexes in Appendix \ref{cyclic}. 
In \cite{weibel}, Weibel generalized this theory to a homology theory for schemes by sheafifying the presheaf of chain complexes 
\[
U\mapsto (CC^{W}_{\bullet}(\calO_{X}(U)), b+uB).
\]
given by the cyclic chain complex of the associated locally ringed space $(X,\calO_{X})$. 
The usual Hochschild--Kostant--Rosenberg map
\begin{equation}
\label{hkr}
HKR: f_{0}\otimes\ldots\otimes f_{k}\mapsto f_{0}df_{1}\wedge\ldots\wedge df_{k},\quad f_{i}\in\calO_{X}(U),~i=1,\ldots,k,
\end{equation}
sheafifies to define a morphism $(C_{\bullet}(\calO_{X}),b,B)\mapsto (\Omega^{\bullet}_{\calO_{X}},0,d)$ of mixed complexes of sheaves, 
where $\Omega^{\bullet}_{\calO_{X}}$ denotes the sheaf of K\"ahler differentials associated to the structure sheaf $\calO_{X}$.
\begin{remark}
When $X$ is a manifold, it is customary to complete the sheaf of cyclic chains by taking the germs or jets of functions around the 
diagonal in $X^{\times (k+1)}$. We do not consider such completions here, neither are we concerned with the question whether
the HKR-map is a quasi-isomorphism. However, it is not difficult to prove that in this case our index theorem extends to the completed chain
complexes.
\end{remark}

Likewise, for a Lie algebroid $(\calL,\calO_{X})$, the assigment
\[
U\mapsto (CC^{W}_{\bullet}(\calU(\calL(U))), b+uB),
\]
forms a presheaf of chain complexes. The associated sheaf of chain complexes is simply denoted by $(CC_{\bullet}(\calU(\calL)), b+uB)$.

The aim of this section is to construct a map between this complex of sheaves and the de Rham complex of sheaves $(\Omega^{\bullet}_{\calL},d_{\calL})$. 
It is the analogue of the so-called ``trace density'' \cite{bnt} or ``character map'' \cite[Theorem 3.9]{ppt} in deformation quantization. In the setting of the paper,
the proper formulation is in terms of the derived category $\calD(X)$ of sheaves on $X$. To state the precise theorem, we define, $\Omega^{\bullet}_{\calL,W}:=\Omega^{\bullet}_{\calL}[[u]]\otimes_{\gf[u]}W$, for any $\gf[u]$-module
$W$.
In the language of derived categories, the main theorem of this section then reads as follows:
\begin{theorem}
\label{character-map}
Let $(\calL,\calO_{X})$ be a Lie algebroid of locally constant rank $r$. There exists a canonical morphism
\[
\Phi^{W}\in\Hom_{\calD(X)}\left(\left(CC^{W}_{\bullet}(\calU(\calL)),b+uB\right),\left(\Omega^{-\bullet}_{\calL,W}[2r],d_{\calL}\right)\right).
\]
This morphism is compatible with SBI-sequence.
\end{theorem}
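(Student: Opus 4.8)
The plan is to follow the Fedosov strategy of the algebraic index theorem, adapted to the sheaf setting by means of a \v{C}ech resolution. First I would use the noncommutative PBW-theorem (Theorem \ref{ncpbw}, and its twisted version) to identify $\calJ(\calL)\otimes_{\calO_{X_2}}\calU(\calL;\calE)$ with the sheaf $\Diff_{\calO_X}(\widehat{\sym}(\calL^\vee)\otimes\calE)$ of fiberwise differential operators, which is a sheaf of (partially completed) Weyl algebras; call it $\calW(\calL;\calE)$. The flat connection $\nabla^{(2)}$ on $\calJ(\calL)$ together with a choice of $\calL$-connections on $\calL$ and $\calE$ produces, via the PBW-trivialization, a \emph{Fedosov connection} $D = -\delta + \nabla + [\gamma, -]$ on $\Omega^\bullet_\calL(\calW(\calL;\calE))$ whose square is inner; the usual iterative argument (solving $\delta\gamma = \nabla\gamma + \tfrac12[\gamma,\gamma] + R$ degree by degree in the symmetric-tensor filtration, using that $\delta$ has a contracting homotopy off the harmonic part) yields a flat such $D$. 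Flatness gives a resolution of $\calU(\calL;\calE)$ — more precisely, the Fedosov differential makes $(\Omega^\bullet_\calL(\calW(\calL;\calE)), D)$ a resolution of the flat sections, which by construction is $\calU(\calL;\calE)$ itself, hence also a resolution relating the cyclic complex of $\calU(\calL;\calE)$ to the cyclic complex of the sheaf of Weyl algebras with $D$-twisted differential.

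Next I would transport the \emph{fundamental cyclic cocycle} $\tau_{2r}$ on the formal Weyl algebra (the one of \cite{ffs}) along this resolution. Pairing a cyclic chain of $\calW(\calL;\calE)$ with $\tau_{2r}$, extended $\Omega^\bullet_\calL$-linearly over the Fedosov connection, produces an element of $\Omega^{-\bullet}_{\calL,W}[2r]$; the cocycle property of $\tau_{2r}$ together with $D^2 = 0$ makes this a chain map after passing to the totalization with the de Rham differential $d_\calL$ on the \v{C}ech–Chevalley–Eilenberg bicomplex. Since the PBW-trivialization, the Fedosov $\gamma$, and the \v{C}ech data all depend on choices (a connection, an open cover, a partition of ordering), the map $\Phi^W$ is a priori only defined after these choices; to get a morphism in $\calD(X)$ I would show that different choices are connected by homotopies. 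For the connection-dependence this is exactly where Proposition \ref{depconn} enters: the derivative of the PBW-map along a path of connections is conjugation by $\exp(\theta_t)$ with $\theta_t$ a derivation, so the induced change on the Fedosov resolution is an inner (gauge) equivalence, and inner equivalences act trivially on cyclic homology up to explicit homotopy. For the \v{C}ech cover one passes to a common refinement; compatibility with the SBI-sequence follows because the construction is $\gf[u]$-linear and $\tau_{2r}$ is a genuine cyclic (not merely Hochschild) cocycle, so it intertwines the $B$-operator with $d_\calL$ and hence commutes with $S$, $B$, and $I$.

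The main obstacle I expect is the \v{C}ech bookkeeping: because $\calL$ need not be projective as a sheaf, there is no global connection and the whole construction lives on a cover, so one must assemble the local Fedosov resolutions into a global object in $\calD(X)$ and check that the transition data (which involves the \emph{derivative} of the PBW-map, controlled by Proposition \ref{depconn} and Remark \ref{twisted-remark}) satisfies the cocycle condition on triple overlaps up to coherent homotopy. Concretely this means working in the \v{C}ech–totalization of the mixed complex and verifying that the homotopies produced by the gauge-equivalence argument are themselves compatible on overlaps — a descent/gluing statement. The verification that $\Phi^W$ does not depend on the auxiliary choices, so that it descends to a well-defined morphism in the derived category, is thus the technical heart; the algebraic index-theorem input (existence and properties of $\tau_{2r}$, flatness of the Fedosov connection) is by now standard and can be cited from \cite{ffs,ppt,willwacher}.
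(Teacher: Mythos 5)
Your overall strategy---PBW-trivialization to a sheaf of Weyl algebras, a flat Fedosov-type connection, pairing with the fundamental cyclic cocycle $\tau_{2r}$, and a \v{C}ech resolution to land in $\calD(X)$---is the same as the paper's, and you correctly identify Proposition \ref{depconn} as the tool controlling connection-dependence. Two points, one a detour and one a genuine gap. First, the iterative Fedosov construction (solving $\delta\gamma=\nabla\gamma+\tfrac12[\gamma,\gamma]+R$ degree by degree) is not needed here and is not what the paper does: the jet algebra already carries the canonical flat connection $\nabla^{(1)}$ (note: it is the \emph{first}, not the second, module structure that gives the resolution of $\calU(\calL)$), and the Maurer--Cartan element is obtained in closed form as $A(\nabla)=j_\nabla\circ d_{CE}\circ j_\nabla^{-1}-\nabla$, which satisfies the Maurer--Cartan equation automatically because $\nabla+A(\nabla)$ is conjugate to the flat Chevalley--Eilenberg differential (Lemma \ref{lemma-mc}). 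Invoking the recursion obscures the main advantage of the PBW approach.

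The genuine gap is at what you yourself call the technical heart: you defer the \v{C}ech assembly to a ``descent/gluing statement'' with ``coherent homotopies'' on overlaps, but you do not supply the mechanism that makes this work, and without it the morphism in $\calD(X)$ is not constructed. The paper resolves this not by producing homotopies a posteriori but by building the descent data into the \emph{definition}: on a $k$-fold intersection $U_{i_0\ldots i_k}$ one takes the affine family of connections $\nabla^{\rm aff}_k(\underline{t})$ parametrized by the $k$-simplex, forms the corresponding family $A^{\rm aff}_k(\underline{t})$ and the derivations $\theta_j(\underline{t})$ coming from $\partial_{t_j}$ of the PBW map, and sets
\[
\Phi(D)|_{U_{i_0\ldots i_k}}=\pm\int_{\Delta^k}\iota_{\theta_{1}}\cdots\iota_{\theta_{k}}\,\tau^{w}_{2r}\bigl(1\otimes\exp(\wedge A^{\rm aff}_k(\underline{t}))\times D\bigr)\,d\underline{t}.
\]
The chain-map identity $\Phi(bD)+\Phi(BD)=\delta\Phi(D)\pm d_{\calL}\Phi(D)$ is then a concrete computation (Proposition \ref{mapofcomplexes}) using Stokes' theorem on $\Delta^k$, the relations $\partial_{t_i}A^{\rm aff}_k=\ad_{\theta_i}(A^{\rm aff}_k)-\nabla^{\rm aff}_k\theta_i-\partial_{t_i}\nabla^{\rm aff}_k$ and $\partial_{t_i}\theta_j-\partial_{t_j}\theta_i=[\theta_i,\theta_j]$, the shuffle identities for $b$, $B$ against powers of $A$, and the basic/invariance property of $\tau_{2r}$. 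Independence of the cover and the connections is then obtained by merging two covers into one and reading off the homotopy from the same identity applied to the mixed intersections---no separate coherence check on triple overlaps is required, because the simplicial connection already encodes all higher coherences. Your proposal names all the right ingredients but stops short of the formula and the computation that constitute the proof.
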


\subsection{The Fedosov resolution}
In this section we construct the analogue of the Fedosov resolution \cite{fedosov} for $\calU(\calL)$. 
In the smooth category, such a resolution was constructed in \cite{nw} from the point of view of formal deformation quantization of the dual 
of a Lie algebroid equipped with the Lie--Poisson bracket. Here we approach the resolution from the other side, using the PBW theorem of the previous section.
We assume for the moment that $\calL$ admits an $\calL$-connection. The construction of the sheaf of algebras \eqref{formal-diff}
only involved the {\em second} $\calU(\calL)$-module structure \eqref{module2} on $\calJ(\calL)$. Now we use
the {\em first} module structure \eqref{module1} to define
\[
D\cdot(\phi\otimes E):=D\cdot_{1}\phi\otimes E,\quad D,E\in\calU(\calL),~\phi\in\calJ(\calL).
\]
The flat connection $\nabla^{(1)}$ on $\calJ(\calL)$ induces a flat connection on $\Diff_{\calO_{X_1}}(\calJ(\calL))$ by commuting, which can 
be written as $\nabla^{(1)}\otimes 1$. Taking the associated Chevalley--Eilenberg complex;
\[
\Omega^\bullet_{\calL}(\Diff_{\calO_{X_1}}(\calJ(\calL))):=\left(\calJ(\calL)\otimes_{\calO_{X_2}}\calU(\calL)\right)\otimes_{\calO_{X_1}}\bigwedge^\bullet_{\calO_X}\calL^\vee
\]
with differential written as $d_{CE}$, we have:
\begin{lemma}
\label{lemdga}
The Chevalley--Eilenberg complex $\left(\Omega_{\calL_1}^\bullet\left(\Diff_{\calO_{X_1}}(\calJ(\calL))\right),d_{CE}\right)$ is a sheaf of differential graded algebras.
\end{lemma}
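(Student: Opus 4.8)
The plan is to build the two pieces of structure separately — the graded algebra and the differential — and then check that they are compatible via a graded Leibniz rule. For the graded algebra structure, write $A:=\Diff_{\calO_{X_1}}(\calJ(\calL))\cong\calJ(\calL)\otimes_{\calO_{X_2}}\calU(\calL)$, so that the underlying sheaf is $A\otimes_{\calO_{X_1}}\bigwedge^\bullet_{\calO_X}\calL^\vee$, graded by exterior degree. The product should be ``multiply in $A$, wedge the form parts'', and the only point to verify is that this descends to the tensor product over $\calO_{X_1}$; for this I would observe that $\calO_{X_1}$ lies in the centre of $A$. Indeed, the image $\alpha_1(\calO_X)$ consists of the degree-zero (``fibrewise constant'') operators on $\calJ(\calL)$, and every fibrewise differential operator is linear over the base, hence commutes with $\alpha_1(\calO_X)$; as a sanity check, any local choice of connection identifies $A$ with the formal Weyl algebra $\calW(\calL)=\widehat{\sym}(\calL^\vee)\otimes_{\calO_X}\sym(\calL)$ of Theorem \ref{ncpbw}, in which $\calO_X$ is manifestly central. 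Granting centrality, associativity and the grading are immediate, with $A$ a degree-zero subalgebra and $\calL^\vee$ sitting in degree one, and these generate $\Omega^\bullet_\calL(A)$ as a sheaf of algebras.

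Next, $d_{CE}$ is the Chevalley--Eilenberg (Koszul) differential \eqref{Koszul} for the connection $\nabla^{(1)}\otimes 1$ on $A$, induced ``by commuting'' from the flat $\calL$-connection $\nabla^{(1)}$ on $\calJ(\calL)$ of \S\ref{jet}. Flatness of $\nabla^{(1)}$ gives flatness of the induced connection — the curvature of $X\mapsto[\nabla^{(1)}_X,-]$ is $[R(\nabla^{(1)})(\cdot,\cdot),-]=0$ — and therefore $d_{CE}^2=0$.

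The heart of the matter is the graded Leibniz rule $d_{CE}(xy)=(d_{CE}x)\,y+(-1)^{|x|}x\,(d_{CE}y)$, which I would deduce from the claim that $\nabla^{(1)}\otimes 1$ acts by derivations of the product \eqref{mult-fd} on $A$. One must keep in mind that \eqref{mult-fd} is built from the coproduct of $\calU(\calL)$ together with the \emph{second} module structure $\cdot_2$ on $\calJ(\calL)$, whereas the differential uses $\nabla^{(1)}$, which belongs to the \emph{first} module structure. Two facts from \S\ref{jet} then suffice: $(i)$ each $\nabla^{(1)}_X$ is a $\gf$-linear derivation of the commutative algebra $\calJ(\calL)$, because $\Delta$ on $\calU(\calL)$ is an algebra homomorphism; and $(ii)$ the two $\calU(\calL)$-module structures on $\calJ(\calL)$ commute, so $\nabla^{(1)}_X$ is a $\cdot_2$-linear endomorphism of $\calJ(\calL)$. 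A one-line computation with \eqref{mult-fd} then yields $(\nabla^{(1)}_X\otimes 1)(a\cdot b)=((\nabla^{(1)}_X\otimes 1)a)\cdot b+a\cdot((\nabla^{(1)}_X\otimes 1)b)$. Once the connection acts by derivations, the standard fact that the Koszul differential of such a connection is a graded derivation applies: $d_{CE}$ restricts to $\nabla^{(1)}$ on the degree-zero generators $A$ (a derivation) and to the Lie algebroid de Rham differential $d_\calL$, up to a sign, on the degree-one generators $\calL^\vee$ (a graded derivation of $\bigwedge^\bullet\calL^\vee$); these restrictions agree on the overlap $\calO_X\subset A$, and \eqref{Koszul} is exactly the unique graded derivation extending them. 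Together with $d_{CE}^2=0$ this makes $(\Omega^\bullet_\calL(A),d_{CE})$ a differential graded algebra, and since every construction above commutes with the restriction maps, it is a sheaf of differential graded algebras.

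I expect the main obstacle to be precisely this interplay of the two $\calU(\calL)$-module structures on $\calJ(\calL)$ — the product uses $\cdot_2$ and the differential uses $\nabla^{(1)}$ from the $\cdot_1$-structure — so that the derivation property, and with it the Leibniz rule, rests entirely on their commutativity together with multiplicativity of $\Delta$. The only other point requiring genuine care is the bookkeeping of sign conventions: pinning down the copy of $\calO_{X_1}$ inside $A$ and checking that $\nabla^{(1)}$ is compatible there with $d_\calL$, so that the Koszul formula \eqref{Koszul} indeed produces honest $\calL^\vee$-forms. Everything else — associativity, the grading, and $d_{CE}^2=0$ — is routine given the results already proved.
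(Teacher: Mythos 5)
Your proposal is correct and follows essentially the same route as the paper: the paper also treats the graded algebra and cochain structures as given by construction and reduces everything to showing that $\nabla^{(1)}\otimes 1$ is a derivation for the product \eqref{mult-fd}, which it verifies by exactly the computation you describe — using multiplicativity of the coproduct and the commutativity of the two $\calU(\calL)$-module structures on $\calJ(\calL)$ (the paper phrases this for the full $\cdot_1$-action of $\calU(\calL)$ in Sweedler notation, of which your statement for $\nabla^{(1)}_X$ is the primitive-element case). The additional points you check explicitly (centrality of $\alpha_1(\calO_X)$, flatness giving $d_{CE}^2=0$) are correct and are simply absorbed into the paper's ``by construction'' remark.
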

\begin{proof}
By construction, $\Omega_{\calL_1}^\bullet\left(\Diff(\calJ(\calL))\right)$ is both a graded algebra (combining the multiplication \eqref{mult-fd} with the wedge product) and a cochain complex.
Therefore, the only thing to check is the compatibility between  both structures. 
Using the fact that both $\calU(\calL)$-module structures \eqref{module1} and \eqref{module2} on $\calJ(\calL)$ commute, one has that
\begin{align*}
D\cdot((\phi\otimes E)\cdot(\psi\otimes F))&=D(\phi (E_{(1)}\cdot_{2}\psi)\otimes E_{(2)}F)\\
&=D(\phi (E_{(1)}\cdot_{2}\psi))\otimes E_{(2)}F\\
&=(D_{(1)}\cdot_{1}\phi)(D_{(2)}\cdot_{1}(E_{(1)}\cdot_{2}\psi))\otimes E_{(2)}F\\
&=(D_{(1)}\cdot_{1}\phi)(E_{(1)}\cdot_{2}(D_{(2)}\cdot_{1}\psi))\otimes E_{(2)}F\\
&=(D_{(1)}\cdot_{1}\phi \otimes E)\cdot ((D_{(2)}\cdot_{1}\psi)\otimes F\\
&=(D_{(1)}\cdot (\phi \otimes E))\cdot (D_{(2)}\cdot(\psi\otimes F)).
\end{align*}
This means that the $\calL$-connection $\nabla^{(1)}\otimes 1$ inducing this $\calU(\calL)$-module structure is a derivation for the 
algebra structure \eqref{mult-fd}. This proves the Lemma.
\end{proof}

\begin{proposition}
The map $\calU(\calL)\to \Omega_{\calL_1}^\bullet(\Diff_{\calO_{X_1}}(\calJ(\calL)))$ is a quasi-isomorphism of
complexes of sheaves.
\end{proposition}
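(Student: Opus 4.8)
The map in question sends $D\in\calU(\calL)$ to the $\calO_{X_1}$-linear operator $\psi\mapsto D\cdot_2\psi$ on $\calJ(\calL)$, that is, to the element $\epsilon\otimes D$ under the identification $\Diff_{\calO_{X_1}}(\calJ(\calL))\cong\calJ(\calL)\otimes_{\calO_{X_2}}\calU(\calL)$, placed in cohomological degree $0$. The first thing to verify is that this is a morphism of complexes: a $0$-cochain $K$ is a cocycle precisely when $\nabla^{(1)}_lK=0$ for all $l$, and for $K=\epsilon\otimes D$ this is the identity $[\nabla^{(1)}_l,\ D\cdot_2(-)]=0$, which holds because the two $\calU(\calL)$-module structures $\cdot_1$ and $\cdot_2$ on $\calJ(\calL)$ commute (Section~\ref{jet}). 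Injectivity is immediate since $\calJ(\calL)$ separates the points of $\calU(\calL)$. Everything else is the assertion that this augmentation is a quasi-isomorphism; as this is a local condition on cohomology sheaves, I would restrict to an open set over which $\calL$ admits an $\calL$-connection and invoke the PBW-theorems of Section~\ref{spbw}.

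On such an open set, the noncommutative PBW-theorem~\ref{ncpbw} identifies $\Diff_{\calO_{X_1}}(\calJ(\calL))$ with the (partially completed) Weyl algebra bundle $\calW(\calL)=\widehat{\sym}(\calL^\vee)\otimes\sym(\calL)$ and transports the flat connection $\nabla^{(1)}$ to a flat connection on it; the Chevalley--Eilenberg complex of $\calW(\calL)$ for this connection is a Fedosov-type complex, and the claim becomes that it resolves $\calU(\calL)$, sitting inside $\calW(\calL)$ as the constants $\widehat{\sym}^0(\calL^\vee)\otimes\sym(\calL)=\sym(\calL)$ and matching the image of $D\mapsto\epsilon\otimes D$. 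Equivalently, and a touch more economically, one can separate the coefficients from the resolution: since the bracket part of $d_{CE}$ does not touch the $\calU(\calL)$-factor, the complex is $\Omega^\bullet_{\calL_1}(\calJ(\calL))\otimes_{\calO_{X_2}}\calU(\calL)$ with differential the Chevalley--Eilenberg differential of $(\calJ(\calL),\nabla^{(1)})$ tensored with $\mathrm{id}_{\calU(\calL)}$; by Corollary~\ref{upbw}, $\calU(\calL)$ is locally free, hence flat, over $\calO_X$, so it suffices to show that $(\Omega^\bullet_{\calL_1}(\calJ(\calL)),\nabla^{(1)})$ resolves $\calO_X$ and then tensor up, checking that $\calO_X\otimes_{\calO_{X_2}}\calU(\calL)\cong\calU(\calL)$ identifies with the stated map.

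Either way, the heart of the matter is a Poincar\'e lemma for the associated Koszul/Fedosov complex. I would transport $\nabla^{(1)}$ through the dual PBW-isomorphism $j^{\mathrm{PBW}}_\nabla\colon\calJ(\calL)\xrightarrow{\ \cong\ }\widehat{\sym}_{\calO_X}(\calL^\vee)$ of Theorem~\ref{dual-pbw} and filter the resulting complex $\widehat{\sym}(\calL^\vee)\otimes\bigwedge^\bullet\calL^\vee$ by symmetric degree. The associated graded differential is, up to sign, the Koszul differential $\delta$ which contracts a symmetric generator into the exterior factor; it is acyclic away from $\widehat{\sym}^0\otimes\bigwedge^0$, with the standard contracting homotopy $\delta^{-1}$. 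A successive-approximation argument, legitimate because $\widehat{\sym}(\calL^\vee)$ is complete in the symmetric degree, then lifts this to $H^{>0}=0$ and $H^0\cong\widehat{\sym}^0(\calL^\vee)=\calO_X$; tensoring with $\calU(\calL)$ gives $H^\bullet=\calU(\calL)$ concentrated in degree $0$, realized by $D\mapsto\epsilon\otimes D$.

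The main obstacle is the concrete identification of the leading term of the transported connection with the acyclic Koszul differential $\delta$: this requires unwinding definition~\eqref{module1} of $\nabla^{(1)}$ against the explicit formula~\eqref{jnabla} for $j^{\mathrm{PBW}}_\nabla$ — precisely the computation for which the explicit, connection-dependent PBW-formulae of Section~\ref{spbw} were developed. The accompanying bookkeeping — which of the two $\calO_X$-module structures on $\calJ(\calL)$ enters each tensor factor, and the compatibility $\calO_X\otimes_{\calO_{X_2}}\calU(\calL)\cong\calU(\calL)$ with $D\mapsto\epsilon\otimes D$ — is routine but must be tracked with care.
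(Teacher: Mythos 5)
Your argument is correct and follows essentially the same route as the paper: the paper's proof simply cites \cite[Prop.\ 4.2.4]{cvdb} for the fact that $\alpha_2:\calO_X\to\Omega^\bullet_{\calL_1}(\calJ(\calL))$ is a quasi-isomorphism and remarks that the identical filtration argument (associated graded of the jet filtration is an acyclic Koszul complex) goes through with the extra $\calU(\calL)$ factor. Your organization --- splitting off $\calU(\calL)$, which is $\calO_X$-flat by the PBW Corollary \ref{upbw}, tensoring the resolution of $\calO_X$, and then proving the Poincar\'e lemma for $(\Omega^\bullet_{\calL_1}(\calJ(\calL)),\nabla^{(1)})$ by transporting to $\widehat{\sym}(\calL^\vee)\otimes\bigwedge^\bullet\calL^\vee$ and filtering by symmetric degree --- supplies exactly the details that the paper outsources to the citation and to its one-line remark.
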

\begin{proof}
In \cite[Prop. 4.2.4]{cvdb} it is proved that the inclusion $\calO_X\stackrel{\alpha_2}{\longrightarrow} \Omega_{\calL_1}^\bullet(\calJ(\calL))$ is 
a quasi-isomorphism of complexes of sheaves. By exactly the same reasoning, namely considering the associated graded complexes
of the filtration of $\calJ(\calL)$ and observing that the associated map is a quasi-isomorphism, the lemma is proved.
\end{proof}
\begin{remark}
In the smooth category, it is a well-known fact that the universal enveloping algebra of a Lie algebroid is isomorphic to the algebra of invariant differential operators on 
an integrating Lie groupoid. The preceeding Proposition should be viewed as an algebraic counterpart of that statement, with the integrating Lie groupoid being replaced 
by the Hopf algebroid of jets, and instead of taking invariants, we consider the cohomology of $\Omega_{\calL_1}^\bullet(\Diff_{\calO_{X_1}}(\calJ(\calL)))$.
\end{remark}
We now proceed analogous to the Fedosov procedure in deformation quantization: on the bundle $\calW(\calL)$ there are two connections: one given by the differential induced 
by the $\calL$-connection $\nabla$, the other given by the Chevalley--Eilenberg operator $d_{CE}$, mapped to $\calW(\calL)$ via the noncommutative PBW map of Theorem \ref{ncpbw}.
More precisely, Theorem \ref{ncpbw} gives an isomorphism of sheaves of graded algebras:
\[
j_\nabla: \Omega^\bullet_{\calL_1}(\Diff_{\calO_{X_1}}(\calJ(\calL)))\stackrel{\cong}{\longrightarrow}\Omega^{\bullet}_{\calL}(\calW(\calL)):=\calW(\calL)\otimes\bigwedge^\bullet\calL^\vee.
\]
This isomorphism induces a DGA structure on the right hand side. On the other hand, the $\calL$-connection $\nabla^\calL$ induces a natural degree $1$-
operator on the algebra of differential forms with values in the Weyl algebra, denoted by 
$\nabla^\calL:\Omega_\calL^\bullet(\calW(\calL))\to\Omega_\calL^{\bullet+1}(\calW(\calL))$.
Notice that this operator will not define a differential, i.e., $\nabla^2\not =0$, unless $\nabla$ happened to be flat. 
Let us explicitly compare the two operators on $\Omega^{\bullet}_{\calL}(\calW(\calL))$ by defining
\begin{equation}
\label{fedosov-connection}
\alpha(\nabla):=j_\nabla\circ d_{CE}\circ j^{-1}_\nabla-\nabla.
\end{equation}
The depence of $\alpha$ on $\nabla$ will be indicated when necessary by writing $\alpha(\nabla)$. A priori, $\alpha(\nabla)$ is an element
of $\Omega^{1}_{\calL}(\End(\calW(\calL)))$, but since $d_{CE}$ and $\nabla$ both act by derivations $\alpha(\nabla)\in\Omega^{1}_{\calL}(\Der(\calW(\calL)))$.
Since every derivation on the Weyl algebra is inner, c.f.\ \eqref{derw}, we can lift $\alpha(\nabla)$ to an element $A(\nabla)\in\Omega^{1}_{\calL}(\calW(\calL))$.
We can in fact choose a canonical lift as follows: restrict $d_{CE}$ and $\nabla$ to $\calJ(\calL)$, resp.\ $\widehat{\sym}(\calL^{\vee})$. Then the difference
\begin{equation}
\label{fedcon}
A(\nabla)=j_\nabla\circ d_{CE}\circ j_\nabla^{-1}-\nabla\in\Omega^{1}_{\calL}(\Der(\widehat{\sym}(\calL^{\vee}))),
\end{equation}
is, by definition of the Weyl algebra, an element in $\Omega^{1}_{\calL}(\calW)$. We claim that $\ad(A(\nabla))=\alpha(\nabla).$
This follows from the fact that $\nabla$ and $d_{CE}$ on $\calW(\calL)=\Diff(\widehat{\sym}(\calL))$ are defined as the commutator with a differential operator,
e.g. for $D\in \Diff(\widehat{\sym}(\calL))$, $\alpha \in \widehat{\sym}(\calL)$ and $X\in \calL$:
\[
\nabla_X(D)(\alpha):=\nabla_X(D(\alpha))-D(\nabla_X(\alpha)).
\]
The crucial property of $A(\nabla)$ is that it satisfies the Maurer Cartan equation.
\begin{lemma}\label{lemma-mc}
Let $A(\nabla)$ be as defined in \eqref{fedcon}. Then:
\begin{equation}
 R(\nabla)+\nabla A(\nabla)+\frac{1}{2}[A(\nabla),A(\nabla)]=0
\end{equation}
\end{lemma}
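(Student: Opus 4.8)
The plan is to derive the Maurer--Cartan identity purely formally from the vanishing of the square of the transported Chevalley--Eilenberg differential, and then to upgrade a certain $\ad$-trivial $2$-form to an actual zero by using the triviality of the centre of the Weyl algebra.

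First I would observe that, since $\nabla^{(1)}$ is a flat $\calL$-connection, $d_{CE}$ is a square-zero derivation of degree $1$ of the sheaf of differential graded algebras of Lemma~\ref{lemdga}. Transporting it along the isomorphism of sheaves of graded algebras $j_\nabla$ from Theorem~\ref{ncpbw}, the operator $\widehat\nabla:=j_\nabla\circ d_{CE}\circ j_\nabla^{-1}$ on $\Omega^\bullet_\calL(\calW(\calL))$ is again a square-zero derivation of degree $1$; by \eqref{fedosov-connection} together with the already-established equality $\ad(A(\nabla))=\alpha(\nabla)$ one has $\widehat\nabla=\nabla+\ad(A(\nabla))$.

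Next I would expand $0=\widehat\nabla^{2}$, writing $A:=A(\nabla)$. Since $\nabla$ is a degree-$1$ derivation of $\Omega^\bullet_\calL(\calW(\calL))$ and $A$ is of degree $1$, three elementary identities apply: $\nabla^{2}=\ad(R(\nabla))$ (the curvature of the induced connection on $\calW(\calL)$ is inner, and corresponds, under $\End(\calL)\subset\Der(\widehat{\sym}(\calL^\vee))\subset\calW(\calL)$, to $\ad$ of the curvature $R(\nabla)$ of $\nabla$ on $\calL$); $\nabla\circ\ad(A)+\ad(A)\circ\nabla=\ad(\nabla A)$, which follows from $\nabla[A,\omega]=[\nabla A,\omega]-[A,\nabla\omega]$; and $\ad(A)\circ\ad(A)=\tfrac12\ad([A,A])$, the graded Jacobi identity for the supercommutator with an odd element. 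Summing these, $0=\widehat\nabla^{2}=\ad\!\bigl(R(\nabla)+\nabla A+\tfrac12[A,A]\bigr)$. To promote this to $R(\nabla)+\nabla A+\tfrac12[A,A]=0$, I would note that $A(\nabla)$, $R(\nabla)$, $\nabla A(\nabla)$ and $[A(\nabla),A(\nabla)]$ all take values in $\Der(\widehat{\sym}(\calL^\vee))$ (the space of fibrewise derivations is preserved by $\nabla$, is closed under the commutator in $\calW(\calL)$, and contains $\End(\calL)$), while the kernel of $\ad\colon\calW(\calL)\to\Der(\calW(\calL))$ is the centre, which consists only of the scalars $\calO_X$, and $\calO_X\cap\Der(\widehat{\sym}(\calL^\vee))=0$. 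Hence $\ad$ is injective on $\Der(\widehat{\sym}(\calL^\vee))$-valued forms and the expression vanishes.

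The only step requiring genuine care is the identity $\nabla^{2}=\ad(R(\nabla))$: one must check that the sign and transpose conventions relating the curvature of $\nabla$ on $\calL$, the curvature of the induced connection on $\widehat{\sym}(\calL^\vee)$, and the embedding $\End(\calL)\hookrightarrow\calW(\calL)$ fit together to reproduce exactly the sign of $R(\nabla)$ appearing in the statement. Everything else is formal manipulation of (super)derivations together with the structure of the completed Weyl algebra.
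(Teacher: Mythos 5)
Your proposal is correct and is essentially the paper's own argument, which is stated in one line: the identity follows by expanding the square of the transported differential $j_\nabla\circ d_{CE}\circ j_\nabla^{-1}=\nabla+A(\nabla)$, which vanishes because $\nabla^{(1)}$ is flat. Your extra detour through $\ad$ and the centre of the Weyl algebra is a harmless (and correctly handled) elaboration of the same idea; one could avoid it by noting that the expansion already takes place in $\Omega^{2}_{\calL}(\Der(\widehat{\sym}(\calL^{\vee})))$, where a derivation acting as zero is zero.
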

\begin{proof}
This follows from the fact that $\nabla+A(\nabla)$ is a differential. Note that $R(\nabla)$ is the curvature of $\nabla$.
\end{proof}

\subsection{The character map}
As mentioned in \ref{set up}, the Hochschild and cyclic (periodic, negative) complexes in the setting of a locally free Lie algebroid over a ringed space
are complexes of sheaves. The definition of the Character map depends on the choice of an $\calL$ connection, and such a connection only exists locally. 
We therefore choose a covering $\{U_{i}\}_{i\in I}$ of $X$ by opens $U_{i}$ over which we do have connections $\nabla_{i}^{\calL}$. Associated to the covering is the 
\v{C}ech groupoid 
\[
X_\calU:=\coprod_{i,j\in I}U_{i}\cap U_{j}\rightrightarrows \coprod_{i\in I}U_{i}.
\]
On its $k$-th nerve $X_{\calU,k}:=\coprod_{i_0,\ldots,i_k\in I}U_{i_0}\cap\ldots\cap U_{i_k}$, we have the family of connections 
\[
\nabla^{\rm aff}_k:=\nabla_{i_0}+\sum_{j=1}^kt_j(\nabla_{i_j}-\nabla_{i_0}),\quad 0\leq t_1\leq \ldots\leq t_k \leq 1,
\]
parametrized by the $k$-simplex $\Delta^k$. In the following, we simply write $\underline{t}=(t_1,\ldots,t_k)\in\Delta^k$.
This ``simplicial connection'' $\nabla^{\rm aff}_k$ is used to construct the following elements:
\begin{itemize}
\item  the family of connection forms $A^{\rm aff}_k(\underline{t})$ via \eqref{fedcon},
\item the derivations $\theta_j(\underline{t}),~j=1,\ldots,k$ by applying Proposition \ref{depconn} to $d\nabla^{\rm aff}_k(\underline{t})\slash dt_j$.
\end{itemize}
These two data appear in the definition of the character map: given a chain $D=D_0\otimes\ldots\otimes D_l\in CC^W_l(\calU(\calL))$, 
we define
\begin{equation}\label{character}
\Phi^{W,w}_{\calU,\nabla}(D)|_{U_{i_0\ldots i_k}}:=(-1)^\alpha\int_{\Delta^k}\iota_{\theta_{i_1}}\ldots\iota_{\theta_{i_k}}\tau^{w}_{2r}(1\otimes \exp(\wedge A^{\rm aff}_k(\underline{t}))\times D)d\underline{t}.
\end{equation}
Here, the cocycle $\tau^w_{2r}$ is defined in \ref{LRR}. It is a cocyle for the Weyl algebra $\calW(\calL)$, thus in fact we have to apply
$j_{\nabla}$ to evaluate $\tau_{2r}^w$. The element $\exp(\wedge A)$ is an $\calL$-form with values in the Weyl algebra such that $\exp(\wedge A)(X_1,\ldots,X_n)=A(X_1)\wedge\ldots\wedge A(X_n)$. Finally, the sign is given by $\alpha={\sum_{i=1}^k i+\sum_{j=1}^l j}$.

For varying $k$ and $l$, we want to view the equation \eqref{character} as a map 
\[
\Phi^{W,w}_{\calU,\nabla}: \left(CC^W_{\bullet}(\calU(\calL)),b+uB\right)\longrightarrow \left(\Omega_{\calL,W}^{-\bullet}[2r](X_{\calU,\bullet}),\delta+(-1)^{*}d_{\calL}\right),
\]
where $\delta$ denotes the \v{C}ech differential on the nerve of the \v{C}ech groupoid. Then the first main step towards the proof of Theorem \ref{character-map} is given by 
the following
\begin{proposition} \label{mapofcomplexes}
The map $\Phi^{W,w}_{\calU,\nabla}$ defined in equation \eqref{character} is a morphism  of complexes.
\end{proposition}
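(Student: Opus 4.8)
The plan is to verify that $\Phi^{W,w}_{\calU,\nabla}$ intertwines the two differentials on each side, by checking separately that it commutes with the \v{C}ech differential $\delta$, with the de Rham differential $d_{\calL}$, and with the Hochschild/cyclic differential $b+uB$. The essential input is that the fundamental cocycle $\tau^w_{2r}$ on the Weyl algebra is a cyclic cocycle (as recalled in \ref{LRR}), so that its pairing with a cyclic chain is annihilated by the Hochschild and Connes differentials up to boundary terms, and that the Fedosov connection form $A^{\rm aff}_k(\underline t)$ satisfies the Maurer--Cartan equation of Lemma \ref{lemma-mc}. The whole computation is an elaboration of the classical argument proving that the algebraic index ``trace density'' is a chain map (as in \cite{ffs,ppt,willwacher}), with the extra \v{C}ech directions handled by Stokes' theorem on the simplex $\Delta^k$.

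Concretely, I would proceed as follows. First, fix the sign conventions so that the total differential on the target is $\delta+(-1)^\ast d_{\calL}$ and the sign $\alpha=\sum_{i=1}^k i+\sum_{j=1}^l j$ in \eqref{character} is exactly the one needed; this bookkeeping is tedious but forced. Second, treat the $d_{\calL}$-part: here one uses that $\nabla^{\rm aff}_k+A^{\rm aff}_k(\underline t)$ is a flat connection on $\calW(\calL)$ (Lemma \ref{lemma-mc}), that $\tau^w_{2r}$ is closed, and that $\tau^w_{2r}$ is invariant under the (inner) derivations generated by $A^{\rm aff}_k$; combined, these give that applying $d_{\calL}$ to the integrand equals $\nabla^{\rm aff}_k$-differentiating it, which after pairing with the cocycle produces the de Rham differential on the form-component. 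Third, treat the $b+uB$-part: since $\tau^w_{2r}$ is a cyclic cocycle for the Moyal--Weyl product and $\exp(\wedge A^{\rm aff}_k)\times D$ shuffles the connection form into the chain $D$, the identity $(b+uB)^\ast\tau^w_{2r}=0$ translates directly into $\Phi^{W,w}_{\calU,\nabla}\circ(b+uB)=\pm d_{\calL}\circ\Phi^{W,w}_{\calU,\nabla}$ modulo the \v{C}ech terms (this is where the derivations $\theta_j(\underline t)$ enter, through the dependence of $A^{\rm aff}_k$ and of $j_{\nabla^{\rm aff}_k}$ on $\underline t$). Fourth, treat the $\delta$-part: the boundary $\partial\Delta^k$ decomposes into faces; the face $t_j=t_{j-1}$ (or the extreme faces $t_1=0$, $t_k=1$) corresponds to dropping the index $i_j$ from the \v{C}ech string, and Stokes' theorem $\int_{\Delta^k}d(\cdots)=\int_{\partial\Delta^k}(\cdots)$ applied to the $\iota_{\theta_{i_1}}\cdots\iota_{\theta_{i_k}}$-contracted integrand reproduces exactly $\delta$ applied to $\Phi^{W,w}_{\calU,\nabla}(D)$. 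The interior derivative $d$ of the integrand along the simplex is governed by Proposition \ref{depconn}: $\partial A^{\rm aff}_k/\partial t_j$ and $\partial\theta_{i_j}/\partial t_l$ are expressed through the $\theta$'s, which is precisely the Cartan-type formula making $\iota_{\theta}$ and the exterior derivative on $\Delta^k$ interact correctly.

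The main obstacle is the interplay of the three differentials in the $\delta$- and $\Delta^k$-directions: unlike the usual algebraic index theorem, here $j_{\nabla^{\rm aff}_k}$ itself varies over the simplex, so when one differentiates $\tau^w_{2r}(1\otimes\exp(\wedge A^{\rm aff}_k)\times D)$ with respect to $\underline t$ one gets, besides the $dA^{\rm aff}_k/dt_j$ terms, also terms from $d j_{\nabla^{\rm aff}_k}/dt_j=\theta_{i_j}\circ j_{\nabla^{\rm aff}_k}$ by Proposition \ref{depconn} (and Remark \ref{twisted-remark}). Reorganizing all of these so that they assemble into $\iota_{\theta}$-contractions plus exact terms, and checking that nothing is left over, is the delicate point; it is the reason the authors flag ``extra terms related to the derivative of the PBW-map'' in the introduction. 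Once the identity is established face-by-face on each $X_{\calU,k}$, the statement follows. I expect the verification that $\tau^w_{2r}$ is both closed and $\ad(A^{\rm aff}_k)$-invariant, together with the Stokes argument, to be routine given Lemma \ref{lemma-mc} and \ref{LRR}; the genuinely new calculation is the cancellation of the PBW-derivative terms, which is where I would spend the bulk of the proof.
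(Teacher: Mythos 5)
Your proposal follows essentially the same route as the paper: apply Stokes' theorem on $\Delta^k$ to convert the \v{C}ech differential into the $\underline{t}$-derivative of the integrand, then use the derivative formulas for $A^{\rm aff}_k$, $j^{\rm aff}_k$ and the $\theta_i$'s from Proposition \ref{depconn}, the Cartan relations among $b$, $B$, $\iota_\theta$, $L_\theta$, the shuffle identities, the Maurer--Cartan equation, and the closedness/basicness of $\tau^w_{2r}$ to reorganize everything into the four terms of \eqref{commdiff}. You correctly identify the PBW-derivative terms as the genuinely new cancellation; the only caveat is that the three differentials cannot literally be checked ``separately'' since the identity mixes them, but you acknowledge and address this interplay.
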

Given a chain $D_0\otimes\ldots\otimes D_l$, abbreviated by $D$,
over $U_{i_0\ldots i_k}$, we have to prove that, recalling \ref{character},
\begin{equation}\label{commdiff}
\begin{split}
\Phi^{k}_{l-1,2s}(b(D))+\Phi_{l+1,2s+2}^k(B(D))=\delta(\Phi_{l,2s}^{k-1}(D))+(-1)^kd_\calL(\Phi_{l,2s}^k(D)).
\end{split}
\end{equation}
 This equality should hold in $\Omega_\calL^{2s-k-l+1}|_{U_{i_0\ldots i_k}}$. 
 The subscript $2s$ indicates which part of $\tau_w$ is used is used in the definition \eqref{character} of $\Phi$, c.f. the expansion in \ref{lrr}.
\begin{remark}
The most natural choice for $w\in W$ is $1$. However, in the case of negative cyclic homology this
choice leads to problems because the cocylce $\tau_{2r}^w$ contains $u$'s, which implies that this map would not
be well defined. Setting $w=u^r$ circumvents this problem.
\end{remark}
Before we start with the proof of Proposition \ref{mapofcomplexes}, we single out two Lemmas needed its course. 
First, let us simply write $j^{\rm aff}_k$ for the family of PBW-isomorphisms $j_{\nabla^{\rm aff}_k(\underline{t})}$  induced by
the connection $\nabla_k^{\rm aff}(\underline{t})$ parametrized by $\underline{t}\in\Delta^k$. 
The following Lemma is an easy consequence of proposition \ref{depconn}. 
\begin{lemma}\label{depconn2}
The following equations hold true:
\begin{align*}
\partial_{t_i}j^{\rm aff}_k&=\ad_{\theta_i}(j^{\rm aff}_k)\\
\partial_{t_i} A^{\rm aff}_k&=\ad_{\theta_i}( A^{\rm aff}_k)-\nabla^{\rm aff}_{k}\theta_i-\partial_{t_i}\nabla^{\rm aff}_k\\
\partial_{t_i}\theta_j-\partial_{t_j}\theta_i&=[\theta_i,\theta_j].
\end{align*}
\end{lemma}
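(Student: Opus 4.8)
The plan is to obtain all three identities from Proposition~\ref{depconn}, applied in each coordinate direction of the simplex $\Delta^{k}$. Write $\gamma_{a}:=\nabla_{i_{a}}-\nabla_{i_{0}}$ for $a=1,\dots,k$; these are $\calO_{X}$-linear, hence act as derivations of $\widehat{\sym}(\calL^{\vee})$. Then $\nabla^{\rm aff}_{k}(\underline t)=\nabla_{i_{0}}+\sum_{a}t_{a}\gamma_{a}$, so $\partial_{t_{i}}\nabla^{\rm aff}_{k}=\gamma_{i}$ is constant in $\underline t$, and for fixed values of the remaining parameters the line $s\mapsto\nabla^{\rm aff}_{k}|_{t_{i}=s}$ is precisely an affine family of $\calL$-connections of the type treated in Proposition~\ref{depconn}, with $\theta_{i}(\underline t)$ the associated derivation $\frac{1-\exp\nabla^{\rm aff}_{k}}{\nabla^{\rm aff}_{k}}(\gamma_{i})$ at the point $\underline t$. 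The engine of the whole lemma is the infinitesimal statement established inside the proof of Proposition~\ref{depconn}, namely the ODE $\frac{d}{ds}j^{\rm PBW}_{s}=\theta\circ j^{\rm PBW}_{s}$; in the present notation it reads
\[
\partial_{t_{i}}\bar\jmath^{\rm aff}_{k}=\theta_{i}\circ\bar\jmath^{\rm aff}_{k},
\]
where $\bar\jmath^{\rm aff}_{k}:=j^{\rm PBW}_{\nabla^{\rm aff}_{k}(\underline t)}\colon\calJ(\calL)\xrightarrow{\ \cong\ }\widehat{\sym}(\calL^{\vee})$ is the jet-level PBW-isomorphism of Theorem~\ref{dual-pbw} and, by Theorem~\ref{ncpbw}, $j^{\rm aff}_{k}$ is conjugation by $\bar\jmath^{\rm aff}_{k}$.

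For the first identity, differentiate $j^{\rm aff}_{k}(K)=\bar\jmath^{\rm aff}_{k}\circ K\circ(\bar\jmath^{\rm aff}_{k})^{-1}$ in $t_{i}$: using the displayed ODE together with $\partial_{t_{i}}(\bar\jmath^{\rm aff}_{k})^{-1}=-(\bar\jmath^{\rm aff}_{k})^{-1}\circ\theta_{i}$ (differentiate $\bar\jmath^{\rm aff}_{k}\circ(\bar\jmath^{\rm aff}_{k})^{-1}={\rm id}$), one gets $\partial_{t_{i}}(j^{\rm aff}_{k}(K))=\theta_{i}\circ j^{\rm aff}_{k}(K)-j^{\rm aff}_{k}(K)\circ\theta_{i}=\ad_{\theta_{i}}(j^{\rm aff}_{k}(K))$, since the commutator of operators on $\widehat{\sym}(\calL^{\vee})$ is by definition the bracket of $\calW(\calL)=\Diff(\widehat{\sym}(\calL^{\vee}))$. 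For the second identity, recall from \eqref{fedosov-connection}--\eqref{fedcon} that, on $\widehat{\sym}(\calL^{\vee})$, $j^{\rm aff}_{k}(d_{CE})=\nabla^{\rm aff}_{k}+A^{\rm aff}_{k}$, and that $d_{CE}$ is independent of $\underline t$. Differentiating and invoking the first identity with $K=d_{CE}$,
\[
\partial_{t_{i}}A^{\rm aff}_{k}=\bigl[\theta_{i},\,\nabla^{\rm aff}_{k}+A^{\rm aff}_{k}\bigr]-\partial_{t_{i}}\nabla^{\rm aff}_{k}=\ad_{\theta_{i}}(A^{\rm aff}_{k})+[\theta_{i},\nabla^{\rm aff}_{k}]-\partial_{t_{i}}\nabla^{\rm aff}_{k};
\]
since $\nabla^{\rm aff}_{k}$ acts by derivations on $\Omega^{\bullet}_{\calL}(\calW(\calL))$ (Lemma~\ref{lemdga}), $[\theta_{i},\nabla^{\rm aff}_{k}]=-\nabla^{\rm aff}_{k}\theta_{i}$ is minus the covariant derivative of the section $\theta_{i}$, which gives the second equation. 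The third identity is just equality of mixed partials: applying $\partial_{t_{j}}$ to the displayed ODE gives $\partial_{t_{j}}\partial_{t_{i}}\bar\jmath^{\rm aff}_{k}=(\partial_{t_{j}}\theta_{i})\circ\bar\jmath^{\rm aff}_{k}+\theta_{i}\circ\theta_{j}\circ\bar\jmath^{\rm aff}_{k}$, and subtracting the expression with $i,j$ interchanged and cancelling the invertible $\bar\jmath^{\rm aff}_{k}$ yields $\partial_{t_{i}}\theta_{j}-\partial_{t_{j}}\theta_{i}=\theta_{i}\circ\theta_{j}-\theta_{j}\circ\theta_{i}=[\theta_{i},\theta_{j}]$.

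I do not anticipate a genuine obstacle here: the substance is entirely that Proposition~\ref{depconn} applies line by line on $\Delta^{k}$, which is exactly why the simplicial connection was taken affine in each $t_{a}$, so that the relevant connection difference $\gamma_{i}$ is constant in all the simplicial parameters. The only point requiring care is the standing identification of a derivation of $\widehat{\sym}(\calL^{\vee})$ with the corresponding element of $\calW(\calL)$, together with the parallel distinction between the fibrewise, symmetrised connection that enters the PBW-map and the covariant derivative $\nabla^{\rm aff}_{k}$ on $\Omega^{\bullet}_{\calL}(\calW(\calL))$; under this identification commutators of operators match the Weyl bracket and the derivation action of $\nabla^{\rm aff}_{k}$ matches covariant differentiation, which is what makes the formal manipulations above legitimate.
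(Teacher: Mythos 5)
Your proof is correct and takes essentially the same route as the paper's: all three identities are derived by differentiating the conjugation formula $j^{\rm aff}_k(K)=\bar\jmath^{\rm aff}_k\circ K\circ(\bar\jmath^{\rm aff}_k)^{-1}$ via the ODE $\partial_{t_i}\bar\jmath^{\rm aff}_k=\theta_i\circ\bar\jmath^{\rm aff}_k$ from Proposition~\ref{depconn}, specializing to $K=d_{CE}$ for the second identity and using equality of mixed partials for the third. Your write-up is in fact slightly more careful than the paper's (you carry the $\partial_{t_i}\nabla^{\rm aff}_k$ term explicitly and distinguish the jet-level isomorphism from its conjugation action), but the substance is identical.
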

\begin{proof}
This follows from straightforward computations. Let $K\in \Diff(\widehat{\sym}(\calL^\vee)$, then
\begin{align*}
\partial_{t_i}( j^{\rm aff}_k(K))=&\partial_{t_i}(j^{\rm aff}_k\circ K\circ  (j^{\rm aff}_k)^{-1})\\
=&\ad_{\theta_i}\circ  j^{\rm aff}_k(K).
\end{align*}
Next, we have
\begin{align*}
\partial_{t_i}(A^{\rm aff}_k+\nabla^{\rm aff}_k)=&\partial_{t_i}(j^{\rm aff}_k\circ d_{CE} \circ  (j^{\rm aff}_k)^{-1})\\
=&\ad_{\theta_i}(j^{\rm aff}_k\circ d_{CE} \circ  (j^{\rm aff}_k)^{-1})\\
=&\ad_{\theta_i}(A^{\rm aff}_k)-\nabla^{\rm aff}_k \theta_i.
\end{align*}
Finally we have, for the family of (commutative) PBW isomorphisms $j^{\rm aff}_k$;
\[
\partial_{t_i}\partial_{t_j} j^{\rm aff}_k=(\partial_{t_i}\theta_j+\theta_j\theta_i) j^{\rm aff}_k
\]
which gives, together with the same equality where $i$ and $j$ are interchanged, the required expression.
\end{proof}

Next, we  need some equalities involving the Maurer--Cartan element $A(\nabla)$ in the sheaf of unital, graded algebras $\Omega^\bullet(\calW(\calL))$.
Recall that $A(\nabla)$ satisfies the Maurer--Cartan equation of Lemma \ref{lemma-mc}.
Set $(A)^k=\frac{1}{k!}1\otimes A\wedge\ldots\wedge A$,
which is part of $1\otimes \exp(\wedge A)$,
and consult \ref{cyclic} for the definitions of cyclic homology and the shuffle product. 
\begin{lemma}\label{shuffle}
For all
$a=a_0\otimes\dots\otimes a_p \in C_p(\Omega^\bullet(\calW(\calL)))$  one has the equalities
\begin{equation*}
\begin{split}
b((A)^k\times a)=&b((A)^k)\times a+(-1)^k(A)^k\times b(a)\\[5pt]
+&(-1)^k\sum_i (A)^{k-1}\times (a_0\otimes\ldots\otimes [A,a_i]\otimes\ldots a_l),\\[5pt]
b((A)^k)=&\nabla((A)^{k-1}),\\[5pt]
B((A)^k\times a)=&(-1)^k(A)^k\times B(a),\\
\end{split}
\end{equation*}
where $[a,b]$ denotes the graded commutator.
\end{lemma}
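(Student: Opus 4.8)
The plan is to verify the three identities directly from the explicit formulas for the Hochschild differential $b$, the Connes operator $B$, and the shuffle product $\times$ as recalled in Appendix \ref{cyclic}, keeping careful track of the Koszul signs that arise from the $\calL$-form degrees. I would treat $(A)^k = \tfrac{1}{k!}\, 1\otimes A\wedge\cdots\wedge A$ as a cyclic $k$-chain in $C_k(\Omega^\bullet(\calW(\calL)))$ whose first tensor slot is the unit $1$ of degree zero and whose remaining $k$ slots each carry $A$, an element of $\Omega^1_\calL(\calW(\calL))$, hence total form-degree $k$. Because the first slot is the unit, most of the degenerate faces in $b((A)^k)$ vanish, and the surviving faces multiply adjacent copies of $A$; since $A\wedge A = \tfrac12[A,A]$ in the graded algebra, the Maurer--Cartan equation of Lemma \ref{lemma-mc} is what converts $\tfrac12[A,A]$ into $-R(\nabla) - \nabla A$. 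This is how one should expect the $\nabla((A)^{k-1})$ on the right-hand side of the second identity to emerge — modulo the curvature term, which I would absorb using that $\tau^w_{2r}$ (and the cocycle identities it satisfies) kills the curvature contribution, or alternatively by working inside the DGA where $\nabla + A$ is a genuine differential so that the $R(\nabla)$ term is already accounted for in the definition of $\nabla$ acting on $\Omega^\bullet(\calW(\calL))$. I would state the second identity in the cleanest form consistent with the conventions fixed in Appendix \ref{cyclic} and use it as a lemma toward the first.

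For the first identity, the strategy is to expand $b((A)^k \times a)$ using the Leibniz-type behaviour of $b$ with respect to the shuffle product. Recall that on a shuffle product $b(x\times y) = b(x)\times y \pm x\times b(y) \pm (\text{terms from faces that straddle the shuffle boundary})$; here the straddling faces are precisely those that multiply the last $A$-slot into the slots of $a$, and cyclically the first $A$-slot into $a_l$ and $a_0$. Collecting those boundary terms and using graded-commutativity of the wedge in $\Omega^\bullet$ to move the remaining $A$'s past each other, one gets exactly the sum $\sum_i (A)^{k-1}\times(a_0\otimes\cdots\otimes[A,a_i]\otimes\cdots\otimes a_l)$ with a single overall sign $(-1)^k$ coming from commuting the form-degree-$k$ element $(A)^k$ past the operations; the term $b((A)^k)\times a$ is the contribution of faces internal to the $(A)^k$-block, and $(-1)^k (A)^k\times b(a)$ the contribution internal to $a$. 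The bookkeeping of which face produces which commutator, and the reconciliation of all the Koszul signs into the single $(-1)^k$, is the one genuinely delicate point; I would organize it by first doing the case $k=1$ (where $(A)^1 = 1\otimes A$ and the shuffle product with a $1$-chain is essentially concatenation) and then inducting on $k$ using $(A)^k \times a = \tfrac1k (A)^1 \times ((A)^{k-1}\times a)$ together with associativity of the shuffle product.

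The third identity is the easiest: $B$ is built from the cyclic operator and the extra degeneracy $s_0$, both of which, when applied to a shuffle product whose left factor $(A)^k$ begins with the unit $1$, simply carry the $(A)^k$-block along rigidly and act on $a$, producing $(-1)^k (A)^k\times B(a)$ — the sign again being the price of commuting the degree-$k$ block past $B$. One should check that no new terms arise: the point is that $B((A)^k) = 0$ on its own (since $(A)^k$ is, up to the wedge powers, a multiple of the unit in the first slot and $B$ of such a ``constant'' chain vanishes), which kills any Leibniz-type cross term. The main obstacle overall is therefore not conceptual but the sign discipline in the first identity; everything else follows from Lemma \ref{lemma-mc}, the graded-commutativity of $\Omega^\bullet_\calL$, and the standard formulas for $b$, $B$, and $\times$.
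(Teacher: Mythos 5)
Your plan is essentially the proof: the paper itself gives no argument for Lemma \ref{shuffle} beyond citing \cite[Lemmas 2.5 \& 2.6]{ef} and \cite[Lemma 3.3]{ppt}, and those cited lemmas are proved by exactly the direct expansion of $b$, $B$ and the shuffle product that you describe --- internal faces of the $(A)^k$-block give $b((A)^k)$, faces internal to $a$ give $(-1)^k(A)^k\times b(a)$, the straddling faces give the $[A,a_i]$-terms, and $B((A)^k\times a)$ collapses because the unit sits in slot zero of $(A)^k$ so that $B((A)^k)=0$ in the normalized complex. Two points deserve tightening. First, the $[A,a_i]$-terms do \emph{not} come from ``graded-commutativity of the wedge in $\Omega^\bullet$'': $\Omega^\bullet_\calL(\calW(\calL))$ is not graded-commutative (the Weyl algebra is noncommutative), and it is precisely the failure of the straddling faces to cancel in pairs that produces the graded commutators; your computation will show this, but the stated justification is backwards. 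Second, and more substantively, the identity $b((A)^k)=\nabla((A)^{k-1})$ cannot hold literally at the chain level: the faces that multiply two adjacent copies of $A$ produce $A\wedge A=\tfrac12[A,A]$, and Lemma \ref{lemma-mc} converts this into $-\nabla A-R(\nabla)$, so a curvature term survives. Of your two proposed fixes, only the first is viable --- $R(\nabla)$ lands in the $\mathfrak{gl}$-part of the Weyl algebra with respect to which $\tau^w_{2r}$ is basic, cf.\ \eqref{basic}, which is exactly how the term is discarded in the proof of Proposition \ref{mapofcomplexes}; redefining $\nabla$ to absorb the curvature would change the statement of the lemma and break the later bookkeeping. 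You should therefore either state the second identity modulo curvature terms annihilated by $\tau^w_{2r}$, or note explicitly that it is only ever used after pairing with the basic cocycle. Finally, be careful with the proposed induction $(A)^k\times a=\tfrac1k(A)^1\times((A)^{k-1}\times a)$: since $A$ has odd total degree, the shuffle signs in $(A)^1\times(A)^{k-1}$ do not obviously produce the factor $k$, and it is safer to verify the normalization $\tfrac{1}{k!}$ directly against the $(k-1)$ surviving interior faces rather than to rely on this factorization.
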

\begin{proof}
Entirely analogous to  \cite[Lemma 2.5 \& 2.6]{ef} and \cite[Lemma 3.3]{ppt}.
\end{proof}

\begin{proof}[Proof of Proposition \ref{mapofcomplexes}]
On the intersections $U_{i_0\ldots \hat{i}_j\ldots i_k}$ the coordinates $t_1,\ldots \hat{t}_j,\ldots t_k$ for $j\geq 1$ and $t_2,\ldots ,t_k$ for $j=0$,
and families of connections $\nabla_0+\sum_{i\neq j}t_i(\nabla_i-\nabla_0)$ and $\nabla_1+\sum_{i=2}^kt_2(\nabla_i-\nabla_1)$, together with Stokes' theorem imply
\[
\begin{split}
\delta(\Phi_{l,2s}^{k-1}(D))|_{U_{i_0\ldots i_k}}=&\sum_{j=0}^k(-1)^j\Phi_{l,2s}^{k-1}(D)|_{U_{i_0\ldots \hat{i}_j\ldots i_k}}\\
=&\sum_{j=0}^k(-1)^j\int_{\partial_j\Delta^k}
\left(\sum_{i=1}^k\iota_{\theta_{1}}\ldots \hat{\iota}_{\theta_{i}}\ldots \iota_{\theta_{k}}\tau_{2s}(A\times D)dt_1\ldots \hat{dt_i}\ldots dt_k\right)\\
=&\int_{\Delta^k}d_{\Delta^k}\left(\sum_{j=1}^k\iota_{\theta_{1}}\ldots \hat{\iota}_{\theta_{j}}\ldots \iota_{\theta_{k}}\tau_{2s}(A\times D)dt_1\ldots \hat{dt_i}\ldots dt_k\right)\\
=&\int_{\Delta^k}\sum_{j=1}^k(-1)^{j+1}\partial_{t_j}\left(\iota_{\theta_{1}}\ldots \hat{\iota}_{\theta_{j}}\ldots \iota_{\theta_{k}}\tau_{2s}(A\times D)\right)dt_1\ldots dt_k
\end{split}
\]

Note that $A$ stands for $(A)_{j}$ where $j$ follows from the formulas.
Hereafter we will be concerned with rewriting the integrand in the last expression.

 We omit the term $dt_1\dots dt_k$. First we split the integrand into two terms:
\begin{align*}
&\sum_{j=1}^k(-1)^{j+1}\iota_{\theta_{1}}\ldots \hat{\iota}_{\theta_{j}}\ldots \iota_{\theta_{k}}\partial_{t_j}\left(\tau_{2s}(A\times D)\right)\quad\quad &(*)\\
+&\sum_{j\neq i}(-1)^{j+1}\iota_{\theta_{1}}\ldots \iota_{\partial_{t_j}\theta_i}\ldots \hat{\iota}_{\theta_{j}}\ldots \iota_{\theta_{k}}\partial_{t_j}\tau_{2s}(A\times D)\quad\quad &(**)
\end{align*}
An application of the relations for the derivatives of $j^{\rm aff}_k$ and $A^{\rm aff}_k$, c.f.\  lemma \ref{depconn2}, gives rise to
\[
\begin{split}
(*)=&\sum_j(-1)^{j+1}\iota_{\theta_{1}}\ldots \hat{\iota}_{\theta_{j}}\ldots \iota_{\theta_{k}}\tau_{2s}(L_{\theta_j}(A\times D))\\
+&\sum_j(-1)^{j+1}\sum_i\iota_{\theta_{1}}\ldots \hat{\iota}_{\theta_{j}}\ldots \iota_{\theta_{k}}(\tau_{2s}((1\otimes A\otimes\ldots \otimes\underbrace{-\nabla\theta_j}_i \otimes \ldots \otimes A)\times D)\\
=&\sum_j(-1)^{j+1}L_{\theta_j}\iota_{\theta_{1}}\ldots \hat{\iota}_{\theta_{j}}\ldots \iota_{\theta_{k}}\tau_{2s}((A)^{2s-k-l+1}\times D))\\
+&\sum_j(-1)^{j+1}\iota_{\nabla\theta_j}\iota_{\theta_{1}}\ldots \hat{\iota}_{\theta_{j}}\ldots \iota_{\theta_{k}}\tau_{2s}((A)^{2s-k-l}\times D))
\end{split}
\]
where we ommited the term involving $\partial_{t_i}\nabla^{\rm aff}_k$ because of the fact that $\tau$ is basic, c.f.\ equation (\ref{basic}).
The relation $\partial_{t_i}\theta_j-\partial_{t_j}\theta_i=[\theta_i,\theta_j]$ can be used to show that
\[
\begin{split}
(**)=&\sum_{j\neq i}(-1)^{j+1}\iota_{\theta_{1}}\ldots \iota_{\partial_{t_j}\theta_i}\ldots \hat{\iota}_{\theta_{j}}\ldots \iota_{\theta_{k}}\tau_{2s}(A\times D)\\
=&\sum_{i<j}(-1)^{j}\iota_{\theta_{1}}\ldots \iota_{[\theta_i,\theta_j]}\ldots \hat{\iota}_{\theta_{j}}\ldots \iota_{\theta_{k}}\tau_{2s}(A\times D).
\end{split}
\]
Moreover, we have an equality derived from $[L_a,\iota_b]=\iota_{[b,a]}$:
\[
\begin{split}
\sum_j(-1)^{j+1}\iota_{\theta_1}\ldots L_{\theta_j}\ldots\iota_{\theta_k}\tau_{2s}(A\times D)
=&\sum_j(-1)^{j+1}L_{\theta_j}\ldots\hat{\iota}_{\theta_j}\ldots\iota_{\theta_k}\tau_{2s}(A\times D)\\
+&\sum_{i<j}(-1)^{j}\iota_{\theta_{1}}\ldots \iota_{[\theta_i,\theta_j]}\ldots \hat{\iota}_{\theta_{j}}\ldots \iota_{\theta_{k}}\tau_{2s}(A\times D).
\end{split}
\]
Collecting the previous formulas amounts to 
\[
\begin{split}
(*)+(**)=&\sum_j(-1)^{j+1}\iota_{\nabla\theta_j}\iota_{\theta_{1}}\ldots \hat{\iota}_{\theta_{j}}\ldots \iota_{\theta_{k}}\tau_{2s}((A)^{2s-k-l}\times D))\\
+&\sum_j(-1)^{j+1}\iota_{\theta_1}\ldots L_{\theta_j}\ldots\iota_{\theta_k}\tau_{2s}((A)^{2s-k-l+1}\times D).
\end{split}
\]
Applying the relation $b\iota_a+\iota_ab=L_a$ several times gives
\[
\sum_j(-1)^{j+1}\iota_{\theta_1}\ldots L_{\theta_j}\ldots\iota_{\theta_k}=b\iota_{\theta_1}\ldots\iota_{\theta_k}+(-1)^{k+1}\iota_{\theta_1}\ldots\iota_{\theta_k}b.
\]
Moreover, we have the following two equalities, obtained from lemma \ref{shuffle} and a simple relation relating the shuffle operation to the insertion operator:
\[
\begin{split}
b((A)^{2s-k-l+1}\times D)=&\nabla(A)^{2s-k-l}\times D-(-1)^{k+l}(A)^{2s-k-l+1}\times b(D)\\[8pt]
-&(-1)^{k+l}\sum_{i}(A)^{2s-k-l}\times D_0\otimes\ldots\otimes [A,D_i]\otimes\ldots\otimes D_l\\[8pt]
\iota_{\theta_1}\ldots\iota_{\theta_k}b\tau_{2s}(A\times D)=&(-1)^k\tau_{2s+2}(B((\theta_k)\times\ldots\times (\theta_1)\times A\times D))\\[8pt]
=&(-1)^{k+l+1}\tau_{2s+2}((\theta_k)\times\ldots\times (\theta_1)\times A\times B(D))\\[8pt]
=&(1)^{l+1}\iota_{\theta_1}\ldots\iota_{\theta_k}\tau_{2s+2}(A\times B(D))
\end{split}
\]
Using that $\nabla D_i+[A,D_i]=0$, we obtain
\[
\begin{split}
(*)+(**)
=&\sum_j\iota_{\theta_{1}}\ldots \iota_{\nabla\theta_j}\ldots \iota_{\theta_{k}}\tau_{2s}((A)^{2s-k-l}\times D))\\
+&\iota_{\theta_1}\ldots\iota_{\theta_k}\left(\tau_{2s}(\nabla (A)^{2s-k-l}\times D)+(-1)^{k+l}\tau_{2s}((A)^{2s-k-l}\times\nabla(D))\right)\\[8pt]
+&(-1)^{k+l+1}\iota_{\theta_1}\ldots\iota_{\theta_k}\tau_{2s}((A)^{2s-k-l+1}\times b(D))\\[8pt]
+&(-1)^{k+l+1}\iota_{\theta_1}\ldots\iota_{\theta_k}\tau_{2s+2}((A)^{2s-k-l+1}\times B(D))\\[8pt]
=&d_\calL\left(\iota_{\theta_1}\ldots\iota_{\theta_k}\tau_{2s}(A\times D)\right)+(-1)^{k+l+1}\iota_{\theta_1}\ldots\iota_{\theta_k}\tau_{2s}(A\times b(D))\\[8pt]
+&(-1)^{k+l}\iota_{\theta_1}\ldots\iota_{\theta_k}\tau_{2s+2}(A\times B(D))
\end{split}
\]
Which is equivalent to \ref{commdiff}. Of course, the sign $\alpha$ from the definition \ref{character} has to be taken into account.
\end{proof}
We are ready to give the proof of the main theorem:
\begin{proof}[Proof of Theorem \ref{character-map}]
Recall that the derived category of sheaves is constructed out of the category of chain complexes of sheaves by 1) identifying homotopic chain maps (i.e., going over to the homotopy 
category and 2) localization with respect to quasi-isomorphisms. For any covering, the natural map from the Lie algebroid chain complex $(\Omega^{-\bullet}_{\calL,W},d_\calL)$ to its  
\v{C}ech resolution $\check{C}(X_\calU,\Omega^{-\bullet}_{\calL,W})$ is a quasi-isomorphism, so by Proposition \ref{mapofcomplexes} the character map $\Phi^{W,w}_{\calU,\nabla}$ defines a 
morphism in the derived category as stated in Theorem \ref{character-map}. It therefore remains to be shown that this map is independent of the covering and the compatible 
connections chosen. 

Suppose we have two such choices $(\calU,\nabla_\calU)$ and $(\calV,\nabla_\calV)$ of coverings and compatible families of connections. We can merge the two coverings
$\calU$ and $\calV$ into one $\calU\coprod\calV$. The associated \v{C}ech groupoid $X_{\calU\coprod\calV}$ decomposes as
\[
X_\calU\coprod X_{\calU\cap\calV} \coprod X_\calV\rightrightarrows \calU\coprod\calV,
\]
where $X_{\calU\cap\calV}$ consists of all intersections of elements in $\calU$ with those of $\calV$. We therefore find $X_\calU$ and $X_\calV$ as two subgroupoids. 
Let us write the character map 
\[
\Phi^{W,w}_{\calU\coprod\calV,\nabla_{\calU\coprod\calV}}=\Phi^{W,w}_{\calU,\nabla_\calU}+\Phi^{W,w}_{\calU\cap\calV,\nabla_{\calU\cap\calV}}+\Phi^{W,w}_{\calV,\nabla_\calV},
\]
corresponding to the induced decomposition of the simplicial nerve of $X_{\calU\coprod\calV}$, where $\Phi^{W,w}_{\calU,\nabla_\calU}$ corresponds to the subgroupoid $X_\calU$, 
and $\Phi^{W,w}_{\calV,\nabla_\calV}$ to $X_\calV$: the remaining terms are written as $\Phi^{W,w}_{\calU\cap\calV,\nabla_{\calU\cap\calV}}$. We now claim that exactly
this last term $\Phi^{W,w}_{\calU\cap\calV,\nabla_{\calU\cap\calV}}$ gives a homotopy between $\Phi^{W,w}_{\calU,\nabla_\calU}$ and $\Phi^{W,w}_{\calV,\nabla_\calV}$ composed 
with the obvious map to $\check{C}(X_{\calU\coprod\calV},\Omega^{-\bullet}_{\calL,W})$. This is seen by application of equation \eqref{commdiff} satisfied by $\Phi^{W,w}_{\calU
\coprod\calV,\nabla_{\calU\coprod\calV}}$.
\end{proof}

\begin{example}
As an example of this construction, let us consider the Weyl algebra $W_{n}$ introduced in Appendix \ref{weyl}.
Indeed, $W_{n}$ is just the universal enveloping algebra of the Lie--Rinehart algebra $(L_{n},\calO_{n})$, where $L_{n}=\Der(\calO_{n})=\calO_{n}\left<\partial/\partial x^{1},\ldots,\partial/\partial x^{n}\right>$. We denote the dual basis of $L_{n}^\vee$ by $\{dx^i\},\; i\in\{1,\ldots n\}$.  
This Lie--Rinehart algebra admits a canonical flat $L_{n}$-connection defined by
\[
\nabla_{\frac{\partial}{\partial x^i}}\left(\frac{\partial}{\partial x^j}\right)=0,\quad\quad ~i,j=1,\ldots,n.
\]
It is straightforward to deduce that $A$ is given in this case by $\sum_i 1\otimes \frac{\partial}{\partial x^i}\otimes dx^i$,
we refer to lemma \ref{lemma-fc} for more explanation. The Hochschild and cyclic (co)homology of $W_n$ are known, see \ref{LRR},
and the image of the only nontrivial generator $c_{2n}$ defined in \ref{LRR} is, using equation \ref{normalization}, equal to 1, 
the only generator of the de Rham cohomology. We conclude that in this case the character map $\Phi$ induces an isomorphism for Hochschild and cyclic homology.
\end{example}

\begin{remark}\label{twisted-remark2}
Continuing remark \ref{twisted-remark} at the end of the last section, the constructions in this section admit straightforward generalizations to the twisted case. 
Let us elaborate a bit. The element $j_{\nabla^{\calL},\nabla^{\calE}}\circ d_{\rm ce}\circ j_{\nabla^{\calL},\nabla^{\calE}}^{-1}-(\nabla^\calL\times 1+1\times \nabla^\calE)= A\in \Omega_\calL^1(\Der(\sym(\calL^\vee)\otimes \calE))$ satisfies the following Maurer--Cartan equation:
\[
\frac{1}{2}[A,A]+(\nabla^{\calL,\calE})A+R(\nabla^\calL)+R(\nabla^\calE)=0.
\]
When either $\calL$ or $\calE$ does not admit a global connection, we again need an open cover such that connections exist on elements of the cover. 
We form affine combinations of connections on intersections and we can consider the derivatives of $j_{\nabla_{\underline{t}}^\calL,\nabla_{\underline{t}}^\calE}$, the induced 
noncommutative PBW theorem $A$ and $\theta$. The results are similar to proposition \ref{depconn2}. Explicitly, we have:
\[
\theta_{\underline{t}}=\frac{1-\exp(\nabla^{\calL,\calE}_{\underline{t}})}{\nabla^{\calL,\calE}_{\underline{t}}}(\gamma_{\calL}+\gamma_{\calE}).
\]
\end{remark}

\section{The index theorem}
\label{sit}
In this section we give a proof of the second part of the main theorem stated in the introduction: the index theorem. In its most general setting, this theorem compares the 
character map with the Hochschild--Kostant--Rosenberg map \eqref{hkr}. The failure of these maps to coincide is given by certain characteristic classes, we therefore start by
introducing these. 

\subsection{Characteristic classes}
\label{cw}
 As before, we denote by $(\calL,\calO_{X})$ a Lie algebroid
over a locally ringed space $(X,\calO_{X})$, and let $\calE$ be a locally free $\calO_{X}$-module of constant finite rank $r$. We start by adapting the usual Chern--Weil construction 
to the case where $\calE$ admits an  $\calL$-connection $\nabla^{\calE}$ with curvature $R(\nabla^{\calE})\in\Omega^{2}_{\calL}({\rm End}(\calE))$ defined by equation \eqref{curv-conn}. Then, for any $GL(\K,r)$-invariant polynomial
$P_{k}:M_{r}(\K)\to\K$ of degree $k$, the form
\[
P_{k}(R(\nabla^{\calE}))\in\Omega^{2k}_{\calL},
\]
is well defined by local triviality of $\calE$, and closed, i.e., $dP_{k}(R(\nabla^{\calE}))=0$. The usual arguments of Chern--Weil theory apply verbatim to show that the induced cohomology class $[P_{k}(R(\nabla^{\calE}))]\in H^{2k}({\calL})$
is independent of the chosen connection. In the algebraic case for Lie--Rinehart algebras this gives the Chern character considered in \cite{maakestad}.
In the smooth setting this version of Chern--Weil theory has been considered in \cite{crainic, fernandes}.

To generalize to the general, i.e., non-projective case, we use simplicial methods.
We therefore need the Lie algebroid associated standard simplices $\Delta^{n}$, this time parameterized by $(t_0,\ldots,t_n)$ satisfying $\sum_it_i=1$. For this, let 
\[
\mathsf{R}_{n}:=\gf[t_{0},\ldots,t_{n}]\slash\left<t_{0}+\ldots+t_{n}=1\right>.
\]
The de Rham complexes $\Omega^{\bullet}_{n}:=(\bigwedge^{\bullet}\mathsf{T}^{\vee}_{n},\mathsf{d}_{n})$ of the associated Lie-Rinehart algebra $\mathsf{T}_{n}:=\Der(\mathsf{R}_{n})$, 
form, for varying $n$, al simplicial differential graded algebra considered in \cite{bg}: any morphism $f:[m]\to[n]$ in the simplicial category $\underline{\mathsf{\Delta}}$, induces a morphism 
$f^{*}:\Omega^{\bullet}_{n}\to\Omega^{\bullet}_{m}$ defined on generators by
\[
f^{*}t_{j}=\sum_{f(i)=j}t_{j},\quad 0\leq j\leq n.
\]
Of course, these are just the polynomial de Rham algebras over the simplices $\Delta^{\bullet}$ and there are ``integration maps'', algebraically defined, 
\[
\int_{\Delta^{n}}:\Omega_{n}^{n}\to\gf,
\]
satisfying all the usual properties, most notably the algebraic analogue of Stokes' theorem.
The Lie algebroid over $\Delta^{n}$ induced by this Lie--Rinehart algebra is denoted by $(\mathcal{T}_{n},\mathsf{R}_{n})$.
With this, we can consider the direct sum Lie algebroid $\calL\oplus\mathcal{T}_{n}$ as in \cite[\S 4.2]{mackenzie} over the product $\Delta^{n}\times\sfG_{n}$ and equip it with the connection
\[
\nabla^{\rm tot}_{n}:=\mathsf{d}_{n}+\sum_{i=0}^{n}t_{i}\nabla^{\calE}_{i}.
\]
The Chern--Weil construction for Lie algebroids is then:
\begin{proposition}
Let $\mathcal{I}_{\bullet}$ be the ring of $GL(\K,r)$-invariant polynomial
$P:M_{r}(\K)\to\K$. 
\begin{itemize}
\item[$i)$]
For $P\in \mathcal{I}_{k}$, the forms
\[
P(\calE,\{\nabla_{i}^{\calE}\})_{n}:=\int_{\Delta^{n}}P(R(\nabla^{\rm tot}_{n}))\in\Omega^{2k-n}_{\calL}(\sfG_{i}),\quad n=0,\ldots, 2k,
\]
satisfy 
\[
d_{\calL} P(\calE,\{\nabla_{i}^{\calE}\})_{n}=\delta P(\calE,\{\nabla_{i}^{\calE}\})_{n-1},
\] 
and therefore forms a \v{C}ech cocycle for the sheaf complex $(\Omega_{\calL}^{\bullet},d)$.
\item[$ii)$] Choosing different coverings and connections produces cohomologous cocycles.
\item[$iii)$] The resulting map
\[
\mathcal{I}^{\bullet}\to H^{2\bullet}_{\rm Lie}(\calL)
\]
is a ring homomorphism.
\end{itemize}
\end{proposition}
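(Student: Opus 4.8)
The plan is to run the classical Chern--Weil/Chern--Simons argument, with the ordinary de Rham complex replaced by the simplicial Lie algebroid complex of $\calL\oplus\calT_n$ over $\Delta^n\times\sfG_n$, where $\sfG_\bullet$ denotes the nerve of the \v{C}ech groupoid of the chosen covering. The bookkeeping device throughout is the bidegree decomposition
\[
\Omega^N_{\calL\oplus\calT_n}=\bigoplus_{p+q=N}\Omega^p_n\otimes\Omega^q_{\calL};
\]
since $\Omega^p_n=0$ for $p>n$, the fibre integration $\int_{\Delta^n}$ (extended $\Omega^\bullet_{\calL}$-linearly from $\int_{\Delta^n}:\Omega^n_n\to\gf$) is supported in bidegree $(n,N-n)$, so for $P\in\mathcal{I}_k$ it sends the total-degree-$2k$ form $P(R(\nabla^{\rm tot}_n))$ to an element of $\Omega^{2k-n}_{\calL}(\sfG_n)$, as claimed.

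For $i)$, one first observes that $P(R(\nabla^{\rm tot}_n))$ is closed in $\Omega^\bullet_{\calL\oplus\calT_n}$: this is the standard Chern--Weil identity (invariance of $P$ together with the Bianchi identity $\nabla^{\rm tot}_n R(\nabla^{\rm tot}_n)=0$), which holds verbatim here since both ingredients are local and can be checked after trivialising $\calE$. Writing the total differential as $\mathsf{d}_n\pm d_{\calL}$ and extracting the bidegree $(n,2k-n+1)$ component of $0=d\,P(R(\nabla^{\rm tot}_n))$ gives $\mathsf{d}_n\big(P(R(\nabla^{\rm tot}_n))^{(n-1,2k-n+1)}\big)=\pm\,d_{\calL}\big(P(R(\nabla^{\rm tot}_n))^{(n,2k-n)}\big)$. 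Applying $\int_{\Delta^n}$ and the algebraic Stokes theorem turns the left side into $\sum_j(-1)^j\int_{\partial_j\Delta^n}$; since $\partial_j\Delta^n\cong\Delta^{n-1}$ sits in $\{t_j=0\}$, where $\nabla^{\rm tot}_n$ restricts to the pullback of $\nabla^{\rm tot}_{n-1}$ along the $j$-th coface of $\sfG_\bullet$, this sum equals $\delta P(\calE,\{\nabla^\calE_i\})_{n-1}$, while the right side gives $d_{\calL}P(\calE,\{\nabla^\calE_i\})_n$. With the sign conventions of the statement this is the asserted cocycle identity.

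For $ii)$, I would reuse the merging device from the proof of Theorem \ref{character-map}: given two coverings with compatible families of connections, pass to their disjoint union, whose \v{C}ech nerve decomposes into the two original nerves together with a ``mixed'' part built from all intersections of opens of the first covering with opens of the second. Forming $P(\calE,\{\text{all connections}\})_\bullet$ on the merged nerve, its restrictions to the two original nerves reproduce the two cocycles (up to the canonical refinement maps), and the identity of $i)$, now applied on the merged nerve, exhibits the mixed part as a $(\delta\pm d_{\calL})$-primitive of their difference. (Equivalently one handles separately a change of connections on a fixed covering --- the Chern--Simons transgression obtained by integrating $P$ over $\Delta^1$ along an affine path of connections --- and a passage to a refinement, which induces a chain map of \v{C}ech complexes intertwining the constructions.)

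For $iii)$, the point is that $P\mapsto[P(\calE,\{\nabla^\calE_i\})_\bullet]$ is the image under $\int_{\Delta^\bullet}$ of the assignment $P\mapsto[P(R(\nabla^{\rm tot}_\bullet))]$, the class of the \emph{un-integrated} form in the cohomology of the total complex built from $\Delta^\bullet$-forms, $\calL$-forms and the \v{C}ech degree on $\sfG_\bullet$. On that complex the identity $(PQ)(R(\nabla^{\rm tot}_\bullet))=P(R(\nabla^{\rm tot}_\bullet))\wedge Q(R(\nabla^{\rm tot}_\bullet))$ holds strictly --- it is the elementary fact $(PQ)(A)=P(A)\wedge Q(A)$ for invariant polynomials evaluated on the $\End(\calE)$-valued curvature --- so $P\mapsto[P(R(\nabla^{\rm tot}_\bullet))]$ is visibly a ring homomorphism into the cohomology of that complex, which $\int_{\Delta^\bullet}$ identifies with $H^{2\bullet}_{\rm Lie}(\calL)$. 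The main obstacle is exactly here: $\int_{\Delta^\bullet}$ is not strictly multiplicative, so to deduce that the induced map on $H^{2\bullet}_{\rm Lie}(\calL)$ is still a ring homomorphism one invokes the simplicial de Rham analogue of Dupont's theorem (comparing $\int_{\Delta^{p+q}}$ with the \v{C}ech cup product via the Eilenberg--Zilber/shuffle map), or, alternatively, one restricts to a cofinal family of situations in which $\calE$ carries a global $\calL$-connection, where the construction collapses to $P\mapsto[P(R(\nabla^\calE))]\in\Omega^{2\bullet}_{\calL}$ and multiplicativity is immediate. Apart from this, the argument is the usual Chern--Weil package, and the only real care needed is in the signs.
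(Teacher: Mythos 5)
Your proposal is correct and follows essentially the same route as the paper: part $i)$ via closedness of $P(R(\nabla^{\rm tot}_n))$ for the total differential plus the algebraic Stokes theorem on $\Delta^n$, part $ii)$ via the covering-merging/refinement device from the proof of Theorem \ref{character-map} together with the affine transgression between connections, and part $iii)$ by reducing strict multiplicativity of the un-integrated Chern--Weil form to a Dupont-style comparison of the simplicial de Rham complex with the \v{C}ech cup product. Your treatment of the bidegree bookkeeping in $i)$ is somewhat more explicit than the paper's, but the argument is the same.
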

\begin{proof}
The first claim follows from an application of Stokes' theorem and the fact that  $P(R(\nabla^{\rm  aff}_{n}))$ is closed w.r.t. the lie algebroid differential $d_{\calL\oplus \mathcal{T}_n}$:
\begin{align*}
d_\calL\left(\int_{\Delta^{n}}P(R(\nabla^{\rm tot}_{n}))\right)=&(-1)^n\int_{\Delta^{n}}d_{\calL\oplus \mathcal{T}_n}\left(P(R(\nabla^{\rm tot}_{n}))\right)+\int_{\Delta^{n}}d_{\mathcal{T}_n}\left(P(R(\nabla^{\rm tot}_{n}))\right)\\
=&\sum_{i=0}^n(-1)^i\int_{\partial_i\Delta^{n}}P(R(\nabla^{\rm tot}_{n})).
\end{align*}
To prove the second claim, note first that in the case of two open coverings, we can look at a
refinement of both and apply a similar argument as in the proof of \ref{character-map} . In the case of different connections, 
i.e. for each open $U_i$ we have two connections $\nabla^1_i$ and $\nabla^2_i$, one forms 
the affine combinations 
\[
\tilde{\nabla}^{\rm tot}_{i_0\ldots i_n}=(1-s)\nabla_{i_0\ldots i_n}^{\rm tot,1}+s\nabla_{i_0\ldots i_k}^{\rm tot,2}
\]
on the bundle $\pi^*\calE_{U_{i_0\ldots i_n}}\times [0,1]\rightarrow U_{i_0\ldots i_n}\times \Delta^n$. 
Defining the homotopy operator 
\[
h: \Omega^k_{\calL\oplus \mathcal{T}_n\oplus \mathcal{T}_1}\rightarrow \Omega_{\calL\oplus\mathcal{T}_n}^{k-1}
\]
by integration along the fiber, one proves, using the usual argument, that
\[
d_\calL(h(P(\tilde{\nabla}^{\rm tot}_{i_0\ldots i_k})))=P(\nabla^{\rm tot,2}_{i_0\ldots i_k})-P(\nabla^{\rm tot,1}_{i_0\ldots i_k}).
\]
For the last claim, we first remark that the product on the \v{C}ech resolution of $\Omega^\bullet_\calL$ is given by the Alexander-Whitney product,
which is graded commutative on cohomology. The Chern-Weil map factorizes through the complex $\oplus_i\Omega^{\bullet}_{\calL\oplus \mathcal{T}_i}(\Delta^n\times\sfG_i)$, c.f. \cite{dupont}, and since the Chern-Weil map to this complex is given by the standard formula and thus a homomoprhism, the result follows.
\end{proof}

In the following, we only need the following two invariant polynomials:
\begin{equation}
\label{cherntodd}
{\rm Ch}(X):={\rm Tr}\left(e^{X}\right),\quad {\rm Td}(X):=\det\left(\frac{X}{1-e^{X}}\right)
\end{equation}
As usual, it is meant that one takes the power series expansion around zero of the analytic functions used in these
definitions, and applies the Chern--Weil construction to each homogeneous component. The resulting characteristic classes
are called the Chern class $\Ch_{\calL}(\calE)$ and the Todd class $\Td_{\calL}(\calE)$.

\subsection{The index theorem}
We are now in a position to prove  the index theorem for Lie algebroids. Recall the character maps $\Phi^{W}_{\calL}$ of Theorem \ref{character-map} mapping --in the derived category-- the sheaves of Hochschild and cyclic complexes to 
the Lie algebroid complexes. Applied to the cycle $1\in C^{W}_{0}(\calU(\calL;\calE))$, we obtain a  class
\[
\Phi^{W}_{\calL}(1)\in H^{ev}({\calL})
\]
in Lie algebroid cohomology
which is canonically defined, i.e., an invariant of the Lie algebroid itself. The main index theorem of this section identifies this class in terms of the characteristic classes of the previous section:
\begin{theorem}
\label{index-theorem}
$\Phi^{W}_{\calU}(1)=\Td_{\calL}(\calL)\ch_{\calL}(\calE)$
\end{theorem}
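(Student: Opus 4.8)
The plan is to evaluate both sides of the asserted identity as explicit \v{C}ech cocycles for the complex $(\Omega^\bullet_\calL,d_\calL)$ and to identify them term by term, the Local Riemann--Roch theorem of Section \ref{LRR} for the fundamental cocycle on the Weyl algebra being the essential local input. By Theorem \ref{character-map} the class $\Phi^W_\calU(1)$ does not depend on the covering $\{U_i\}$ or on the compatible connections $\nabla_i=(\nabla_i^\calL,\nabla_i^\calE)$, and by the Proposition of Section \ref{cw} the class $\Td_\calL(\calL)\ch_\calL(\calE)$ is represented by the simplicial Chern--Weil \v{C}ech cocycle $\{P(\calE,\{\nabla_i\})_k\}$ with $P=\Td\cdot\ch$; hence it suffices to compare these two cocycles for one common choice of data. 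Specialising the defining formula \eqref{character} to the cycle $D=1$, the component of (a representative of) $\Phi^W_\calU(1)$ on $U_{i_0\ldots i_k}$ is
\[
(-1)^\alpha\int_{\Delta^k}\iota_{\theta_{i_1}}\cdots\iota_{\theta_{i_k}}\,\tau^w_{2r}(1\otimes\exp(\wedge A^{\rm aff}_k(\underline t)))\,d\underline t,
\]
where $A^{\rm aff}_k$ is the Fedosov connection form \eqref{fedcon} of the simplicial connection $\nabla^{\rm aff}_k(\underline t)$, evaluated after transport to $\calW(\calL)$ via $j_{\nabla^{\rm aff}_k}$; in the twisted case the relevant algebra is $\calW(\calL)\otimes\End(\calE)$ and $\tau^w_{2r}$ is composed with the fibrewise trace on $\End(\calE)$.

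The local heart of the argument is the computation over a single chart $U_i$. There the Fedosov form $A:=A(\nabla_i)\in\Omega^1_\calL(\calW(\calL)\otimes\End(\calE))$ satisfies the Maurer--Cartan equation of Lemma \ref{lemma-mc}, in the twisted form of Remark \ref{twisted-remark2}, so that the curvature of the Fedosov connection $\nabla_i+A$ equals $R(\nabla_i^\calL)+R(\nabla_i^\calE)$, an element of symmetric degree $2$ in $\widehat{\sym}(\calL^\vee)$. Feeding this into the Local Riemann--Roch theorem of Section \ref{LRR} --- which expresses $\tau^w_{2r}$ evaluated on the exponential of a Maurer--Cartan element of $\Omega^\bullet_\calL(\calW(\calL))$ as the Chern--Weil form of an explicit invariant power series in that curvature --- one obtains, for the algebra $\calW(\calL)=\Diff(\widehat{\sym}(\calL^\vee))$ of fibrewise differential operators, that the power series in question is precisely the Todd genus (this is the ``Riemann--Roch for differential operators'' normalisation of \cite{ef,bnt,ramadoss}, the overall constant being the one fixed in Section \ref{LRR} and illustrated in the Example), while the $\End(\calE)$-twist contributes $\Tr(\exp(R(\nabla_i^\calE)))=\ch(R(\nabla_i^\calE))$ through the trace. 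This yields the equality of $\calL$-forms
\[
\tau^w_{2r}(1\otimes\exp(\wedge A(\nabla_i)))=\Td(R(\nabla_i^\calL))\,\ch(R(\nabla_i^\calE))
\]
on $U_i$, which is exactly the degree-$0$ component of $P(\calE,\{\nabla_i\})_\bullet$.

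For the higher \v{C}ech components I would run the same computation with the simplicial connection $\nabla^{\rm aff}_k(\underline t)$ in place of $\nabla_i$, now over $U_{i_0\ldots i_k}\times\Delta^k$. Using the identities $\partial_{t_i}A^{\rm aff}_k=\ad_{\theta_i}(A^{\rm aff}_k)-\nabla^{\rm aff}_k\theta_i-\partial_{t_i}\nabla^{\rm aff}_k$ and $\partial_{t_i}\theta_j-\partial_{t_j}\theta_i=[\theta_i,\theta_j]$ of Lemma \ref{depconn2}, the insertions $\iota_{\theta_{i_1}}\cdots\iota_{\theta_{i_k}}$ convert the integrand into the restriction of the total curvature form $P(R(\nabla^{\rm tot}_k))$ of the Chern--Weil construction of Section \ref{cw}, with $P=\Td\cdot\ch$; the integral over $\Delta^k$ then matches $P(\calE,\{\nabla_i\})_k$ exactly, once the sign $\alpha$ from \eqref{character} and the Stokes signs over $\Delta^k$ are accounted for. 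Hence the two \v{C}ech cocycles coincide, and the theorem follows. For completeness I would note that the statement for a general $\gf[u]$-module $W$ and choice of $w$ reduces to the same local identity, the element $w$ merely bookkeeping the powers of $u$.

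I expect the main obstacle to be twofold. The deep input is the Local Riemann--Roch computation of Section \ref{LRR}: establishing that the fundamental cyclic cocycle on the ``differential-operator'' Weyl algebra $\calW(\calL)$ evaluates the \emph{Todd}, rather than the $\hat A$, genus, which rests on the explicit description and normalisation of that cocycle together with a formal-geometry (Gelfand--Fuks type) argument. The main difficulty specific to the present proof, by contrast, is the simplicial bookkeeping: one must verify that the extra \v{C}ech terms forced on us by the use of a \v{C}ech rather than a global resolution --- all of which, as remarked in the introduction, involve the derivative $\theta$ of the PBW map --- reassemble, with exactly the right signs, into the higher components of the simplicial Chern--Weil cocycle of Section \ref{cw}.
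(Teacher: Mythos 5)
Your proposal follows essentially the same route as the paper's proof: reduce to comparing \v{C}ech--de Rham cocycles for one choice of covering and connections, apply the local Riemann--Roch theorem to replace $\tau^w_{2r}$ by the Chern--Weil cochain $\chi(\Td\,\Ch)$, and then match the curvature terms $C(A,A)$, $C(\theta_i,A)$, $C(\theta_i,\theta_j)$ against the components of the total simplicial curvature $R(\nabla^{\rm tot}_k)=R(\nabla^{\rm aff})+\sum_i dt_i\,\gamma_i$. The one point you gloss over, which the paper handles via Lemma \ref{lemma-fc} (and the remark on replacing $A$ by $\tilde A=A-\pi(A)$ in the twisted case), is the degree count showing $\pi(A)=0$ and $\pi(\theta_i)=0$, which is what actually reduces the Lie-algebraic curvature $C(a,b)=[\pi(a),\pi(b)]-\pi([a,b])$ to $\pi(R(\nabla^{\rm aff}))$ and $-\gamma_i$ and kills $C(\theta_i,\theta_j)$; note also that the Fedosov connection $\nabla+A$ is literally \emph{flat} by Lemma \ref{lemma-mc}, and it is only its projection under $\pi$ that produces the curvature entering the characteristic classes.
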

\begin{proof}
Unraveling the definition of the map $\Phi^{W}_{\calL}$ in the proof of Theorem \ref{character-map}, we see that we have to identify, for any choice $(\{U_{i}\}_{i\in I},\nabla_{i})$ of a covering of $X$ together with an $\calL$-connection over $U_{i}$,  
the differential forms
\[
\int_{\Delta^{k}}\iota_{\theta_{i_{1}}}\ldots\iota_{\theta_{i_{k}}}\tau^{W}_{2r}(1,A,\ldots ,A)dt_{1}\cdots dt_{k}\in\Omega^{2r-k}_{\calL,W}(U_{i_{0},\ldots,i_{k}}).
\]
For a fixed covering but varying depth of intersection, these forms together form a closed cycle in the \v{C}ech-de Rham complex of the Lie algebroid $\calL$. 

Let us now consider the integrand of the expression above. Using the fact that the evaluation morphism to Lie algebra cohomology of Appendix \ref{mor-lie} commutes
with taking insertions $\iota_{\theta_{i}}$, together with the local Riemann--Roch Theorem \ref{lrr} allows us to rewrite this integrand as
\begin{align*}
\iota_{\theta_{i_{1}}}\ldots\iota_{\theta_{i_{k}}}\tau^{W}_{2r}(1,A,\ldots A)&={\rm ev}_{1}(\iota_{\theta_{i_{1}}}\ldots\iota_{\theta_{i_{k}}}\tau^{W}_{2r})(A,\ldots,A)\\
&=\iota_{\theta_{i_{1}}}\ldots\iota_{\theta_{i_{k}}}{\rm ev}_{1}(\tau^{W}_{2r})(A,\ldots,A))\\
&=\iota_{\theta_{i_{1}}}\ldots\iota_{\theta_{i_{k}}}\chi(P)(A,\ldots,A),
\end{align*}
where $P$ is the invariant polynomial \eqref{cherntodd} defining the Todd class. Next, in the evaluation of the Chern--Weil homomorphism, three types of curvature terms appear:
\[
C(A,A),\quad C(\theta_{i},A),\quad C(\theta_{i},\theta_{j}).
\]
As explained in \ref{LRR}, one has $C(a,b)=[\pi(a),\pi(b)]-\pi([a,b])$ where $\pi:\calW\rightarrow \mathfrak{gl}_n\oplus\mathfrak{gl}_r$ is the projection. 
The following lemma allows us to compute these terms.
\begin{lemma}
\label{lemma-fc}
Let $A(\nabla)\in \Omega^{1}_{\calL}(\calW)$ be the element defined in \eqref{fedcon}. Then:
\begin{itemize}
 \item[$i)$] $\deg_{\calL}(A(\nabla))= 1$ and $A(\nabla)$ has the form
\[
A(\nabla)=\sum_{k\geq -1} A_{k}\quad\quad\quad\deg_{\calL^{\vee}}(A_{k})=k+1,
\]
\item[$ii)$] The first few terms are given by:
\begin{align*}
A_{-1}&=-1\otimes {\rm id}_{\calL}\in  \widehat{\sym}(\calL^{\vee})\otimes \calL\otimes\calL^{\vee}\subset \Omega^{1}_{\calL}(\calW)\\
A_{0}&=0\hspace{2.5cm}\mbox{when $\nabla$ is torsion free}.
\end{align*}
\end{itemize}
\end{lemma}
\begin{proof}
We wrote $\deg_{\calL}$ and $\deg_{\calL^{\vee}}$ for the degree in $\sym(\calL)$ resp. $\widehat{\sym}(\calL^{\vee})$ of an element in the Weyl algebra $\calW(\calL)\cong \widehat{\sym}(\calL^\vee)\otimes\sym(\calL)$.
Since $d_{CE}$ and $\nabla$ act by derivations, $A$ also has this property. One forms that act by derivations can clearly be written
 in the above from, thus $ii)$ is proven. For $X\in\calL$ and $\alpha\in \sym^k(\calL^\vee)$ we have
\begin{equation}\label{Ai}
A_i(X)(\alpha)(X_1\ldots X_{k+i-1})=A(X)(\alpha)(X_1\ldots X_{k+i-1})
\end{equation}
and since $\sym(\calL^\vee)$ is generated by $\sym^1(\calL^\vee)$, we can assume $\alpha\in \sym^1(\calL^\vee)$.

 The definition of $j^{PBW}_\nabla$ allows to compute, for $\phi\in \calJ(\calL)$ and $\alpha\in \sym^1(\calL^\vee)$,
\begin{align*}
j_{\nabla}(\phi)(1)=\phi(1),\quad j_{\nabla}(\phi)(X)=\phi(X),\quad
j_{\nabla}(\phi)(XY)=\frac{1}{2}\phi(XY+YX-\nabla_XY-\nabla_YX).
\end{align*}
which in turn implies that
\begin{equation}\label{j-1}
j_\nabla^{-1}(\alpha)(X)=\alpha(X),\quad j_\nabla^{-1}(\alpha)(XY+YX-\nabla_XY-\nabla_YX)=\alpha(XY)=0.
\end{equation}
Using \ref{Ai} we can explicitly compute $A_{-1}$ and $A_0$:
\begin{align*}
A_{-1}(X)(\alpha)(1)=&j_\nabla\circ d_{CE}\circ j_{\nabla}^{-1}(\alpha)(1,X)\\
=&X( j_{\nabla}^{-1}(\alpha)(1))-j_{\nabla}^{-1}(\alpha)(X)\\
=&-\alpha(X).\\
A_0(X)(\alpha)(Y)=&j_\nabla\circ d_{CE}\circ j_{\nabla}^{-1}(\alpha)(Y,X)-\nabla_X\alpha(Y)\\
=&X( j_{\nabla}^{-1}(\alpha)(Y))- j_{\nabla}^{-1}(\alpha)(XY)-X(\alpha(Y))+\alpha(\nabla_XY)\\
=&X(\alpha(Y))- j_{\nabla}^{-1}(\alpha)(XY)-X(\alpha(Y))+\alpha(\nabla_XY).
\end{align*}
When the torsion $\nabla_XY-\nabla_YX-[X,Y]$ vanishes equation \ref{j-1} implies $j_\nabla^{-1}(\alpha)(XY)=\alpha(\nabla_XY)$, and 
hence the result.
\end{proof}

\begin{remark}
Because of the previous Lemma we have $\pi(A)=0$ in the computation of the Chern--Weil homomorphism. In the twisted case,  c.f. remarks \ref{twisted-remark} and \ref{twisted-remark2} things are a little different. First note that  in this case $\pi(A)\in \Omega_\calL(\End(\calL))$, in other words, the projection on the $\mathfrak{gl}_r$ part, c.f. 
\ref{LRR}, vanishes because $A$ acts by derivations and thus satisfies $\deg_\calL=1$. Then we define $\tilde{A}=A-\pi(A)$ which clearly satisfies $\pi(\tilde{A})=0$.
Observe that, using $\tilde{\nabla}^{\calL}=\nabla^{\calL}-\pi(A)$, we can write this as follows:
\[
\tilde{A}_{\underline{t}}=j_{\nabla^{\calL}_{\underline{t}},\nabla^{\calE}_{\underline{t}}}\circ d_{CE}\circ j_{\nabla^{\calL}_{\underline{t}},\nabla^{\calE}_{\underline{t}}}-\tilde{\nabla}^{\calL}_{\underline{t}}-\nabla^{\calE}_{\underline{t}}.
\]
Since $\pi(A)$ takes values in $\End(\calL)$ and $\tau_{2r}^w$ is basic with respect to these elements, the following equality holds:
\[
\iota_{\theta_{i_{1}}}\ldots\iota_{\theta_{i_{k}}}\tau^{W}_{2r}(1,A,\ldots ,A)=\iota_{\theta_{i_{1}}}\ldots\iota_{\theta_{i_{k}}}\tau^{W}_{2r}(1,\tilde{A},\ldots ,\tilde{A}),
\]
hence we can assume that $\pi(A)=0$.
\end{remark}

We proceed with the proof of theorem \ref{index-theorem}. Either using torsion free connections or the previous remark, 
we can assume that $\pi(A)=0$.  Moreover, from \ref{depconn} we obtain an explicit 
expression for $\theta_i$, which first terms are given by:
\[
\theta_i=\gamma_i+\frac{1}{2}\nabla^{\rm  aff}\gamma_i+\frac{1}{6}(\nabla^{\rm  aff})^2\gamma_i+\ldots
\]
Here $\gamma_i$ is a derivation of $\widehat{\sym}(\calL^\vee)$ that raises the degree by one. Therefore $\deg_\calL(\gamma)=1$ and $\deg_{\calL^\vee}(\gamma)=2$. The other terms all have higher degrees, hence $\pi(\theta_i)=0$. This implies that $C(A,A)=-\pi([A,A])$, $C(A,\theta_i)=-\pi([A,\theta_i])$ and
$C(\theta_i,\theta_j)=\pi([\theta_i,\theta_j])$. Since the commutator of derivations of degree $(p,1)$ and $(q,1)$ has degree $(p+q,1)$, it follows that
$\pi([\theta_i,\theta_j])=0$. Using the expressions from \ref{depconn} we obtain:
\begin{align*}
C(A,A)&=-\pi([A,A]=\pi(\nabla A+R(\nabla^{\rm  aff}))=\pi(R(\nabla^{\rm  aff}))\\
C(\theta_i,A)&=-\pi([\theta_i,A])=-\pi(\partial_{t_i}A+\nabla^{\rm  aff}\theta_i+\partial_{t_i}\nabla^{\rm  aff})=-\gamma_i
\end{align*}
We used here that both the operators $\nabla^{\rm  aff}$ and $\partial_{t_j}$ leave the $\calL$ and $\calL^\vee$ degrees intact. 
Now we identified all the curvature terms, we can compare them with the terms that appear in the curvature of $\nabla_k^{\rm tot}=d_k+\sum_{i=0}^kt_i\nabla_i$.
An easy computation shows that 
\[
R(\nabla^{\rm tot}_k)=R(\nabla^{\rm  aff})+\sum_{i=1}^kdt_i\gamma_i
\]
Thus, the class obtained from Chern-Weil theory is computed  by
\begin{align*}
\int_{\Delta^k}P(R(\nabla^{\rm tot}_k))(X_1,\ldots, X_{2n-k})=\int_{\Delta^k}P(R(\nabla^{\rm tot}_k))(X_1,\ldots X_{2n-k},\frac{\partial}{\partial t_1},\ldots, \frac{\partial}{\partial t_k})
dt_1\ldots dt_k \\
=\int_{\Delta^k}\frac{1}{(2n)!}\sum_{\sigma\in S^{2n}}(-1)^{\sigma}\tilde{P}\left(R(\nabla^{\rm tot}_k)(Y_{\sigma(1)},Y_{\sigma(2)}),\ldots,R(\nabla^{\rm tot}_n(Y_{\sigma(2k-1)},Y_{\sigma(2n)}))\right)dt_1\ldots dt_k
\end{align*}
Given the form of $R(\nabla^{\rm tot})$, it is easy to see that only permutations where $Y_{\sigma(2l-1)},Y_{\sigma(2l)}=\partial_{t_i},X_j$ or
$X_j,X_k$ have nonzero contributions, namely $\sum_i\gamma_i(X_j)$ and $R(\nabla^{\rm  aff}(X_j,X_k))$ . On the other hand the class
$\Phi_{\calL}(1)$ is given by
\begin{align*}
&\int_{\Delta^k}\chi(P)(\theta_{i_1}\wedge\ldots\wedge\theta_{i_k}\wedge A(X_1)\wedge\ldots\wedge A(X_{2n-k})dt_1\ldots dt_k\\
=&\int_{\Delta^k}P\left(\frac{1}{(2n)!}\sum_{\sigma\in S^{2n}}(-1)^{\sigma}(C(B_{\sigma(1)},B_{\sigma(2)}),\ldots C(B_{\sigma(2n-1)},B_{\sigma(2n)}))\right)dt_1\ldots dt_k
\end{align*}
where the only nonzero contributions come from permutations with $B_{\sigma(2l-1)},B_{\sigma(2l)}=\theta_i, A(X_j)$ or
$A(X_{i}),A(X_{j})$, and in those cases the curvature is given by $\gamma_i(X_{j})$ and $R(\nabla^{\rm  aff})(X_{i},X_{j})$ and $\gamma_i$.
\end{proof}

\subsection{Compatibility with the HKR map}
In this section we shall refine the index Theorem \ref{index-theorem} of the previous section by showing that the Lie algebra cohomology class it determines measures
the obstruction for our character map to be compatible with the Hochschild--Kostant--Rosenberg map \eqref{hkr}. Define $HKR_{\calL}:=\rho^{*}\circ HKR$, the pre-composition
of the HKR-map with the pull-back along the anchor, i.e., 
\[
HKR_{\calL}(f_{0}\otimes\ldots\otimes f_{k}):=f_{0}d_{\calL}f_{0}\wedge\ldots\wedge d_{\calL}f_{k}\in\Omega^{k}_{\calL},\quad f_{i}\in\calO_{X},~i=0,\ldots k.
\]
Using the obvious inclusion $\calO_{X}\hookrightarrow \calU(\calL)$, we can compare this map with the character map of Theorem \ref{character-map}:
\begin{theorem}
\label{comp-hkr}
Let $\calL$ be a Lie algebroid over a locally ringed space $(X,\calO_X)$. 
The diagram
\[
\xymatrix{CC_\bullet(\calO_X)\ar[r]^{HKR_{\calL}}\ar[d]_{i^*}&\Omega_{\calL}^\bullet\ar[d]^{\wedge{\rm Td}_\calL(\calL){\rm Ch}_\calL(\calE)}
\\
CC_\bullet(\calU(\calL;\calE))\ar[r]_{\Phi_\nabla}&\Omega^\bullet_\calL}
\]
commutes in the derived category.
\end{theorem}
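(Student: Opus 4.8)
The goal is to show that, restricted to the commutative subsheaf $\calO_X\subset\calU(\calL;\calE)$, the character map is, up to homotopy, the Lie algebroid HKR map $HKR_\calL$ followed by the cup product with the class $\Phi_\nabla(1)$; since the index Theorem \ref{index-theorem} identifies $\Phi_\nabla(1)$ with $\Td_\calL(\calL)\ch_\calL(\calE)$, this is exactly the asserted commutativity. Both composites in the diagram are morphisms of complexes of sheaves into a \v{C}ech resolution of $(\Omega^{-\bullet}_{\calL,W},d_\calL)$ --- the one through $\Phi_\nabla$ by restricting Proposition \ref{mapofcomplexes} along $i$, the other because $HKR_\calL$ is a morphism of mixed complexes and cup product with the closed class $\Td_\calL(\calL)\ch_\calL(\calE)$ is a chain map --- so it suffices to compare local cochain representatives and then invoke the refinement/Stokes argument from the proof of Theorem \ref{character-map}.

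First I would fix a covering $\{U_i\}$ with compatible connections $\nabla_i$ and evaluate the defining formula \eqref{character} on a chain $f_0\otimes\cdots\otimes f_k$ with all $f_i\in\calO_X$. The essential input is the Fedosov lift of a function: under the isomorphism of Theorem \ref{ncpbw}, the image of $f\in\calO_X\subset\calU(\calL;\calE)$ is a flat section $\tilde f$ of the Weyl bundle lying entirely in the ``position'' part $\widehat{\sym}(\calL^\vee)$, so that $\pi(\tilde f)=0$ in the sense of Lemma \ref{lemma-fc}. Moreover the flatness relation $(\nabla+\ad(A(\nabla)))\tilde f=0$, combined with the identification $A_{-1}=-1\otimes{\rm id}_\calL$ of Lemma \ref{lemma-fc}, shows that $\nabla\tilde f$ has symbol $d_\calL f$, while all higher components of $\tilde f$ and of $\nabla\tilde f$ lie in the augmentation ideal of $\widehat{\sym}(\calL^\vee)$.

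Next I would run the bookkeeping from the proof of Theorem \ref{index-theorem}, now carrying the function inputs along. After applying $j_\nabla$, forming the shuffle with $1\otimes\exp(\wedge A^{\rm aff}_k)$ and inserting the $\iota_{\theta_i}$, the cocycle $\tau^w_{2r}$ is evaluated by the local Riemann--Roch Theorem \ref{lrr}: ${\rm ev}_1(\tau^w_{2r})=\chi(P)$ is the Chern--Weil representative of the Todd polynomial $P$, and the only curvature contributions are $C(A,A)$, $C(A,\theta_i)$ and $C(\theta_i,\theta_j)$, which --- after $\int_{\Delta^k}$ and summation over the nerve --- assemble into $\Td_\calL(\calL)\ch_\calL(\calE)$ exactly as there. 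The inputs $\tilde f_0,\ldots,\tilde f_k$ produce no curvature terms because $\pi(\tilde f_i)=0$; using flatness of the $\tilde f_i$ together with the fact that the restriction of $\tau^w_{2r}$ to the commutative position algebra is the de Rham cochain (the same computation as in the untwisted algebraic index theorem, cf. Lemma \ref{shuffle} and \cite{ef,ppt}), they factor out as $f_0\,d_\calL f_1\wedge\cdots\wedge d_\calL f_k$. This gives, on local representatives,
\[
\Phi_\nabla(i(f_0\otimes\cdots\otimes f_k))=HKR_\calL(f_0\otimes\cdots\otimes f_k)\wedge\Td_\calL(\calL)\ch_\calL(\calE),
\]
and the Stokes/refinement argument of Theorem \ref{character-map} promotes this to commutativity of the diagram in $\calD(X)$, with compatibility with the SBI/mixed structure following as for the untwisted character map.

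The step I expect to be the main obstacle is controlling the higher-order corrections: one must verify that the components $A_0,A_1,\ldots$ of $A(\nabla)$ and the higher terms $\tfrac12\nabla^{\rm aff}\gamma_i+\cdots$ of $\theta_i$ contribute nothing extra when contracted against the flat sections $\tilde f_i$ inside $\tau^w_{2r}$. This is exactly where basicness of $\tau^w_{2r}$ (equation \eqref{basic}) and a degree count in $\widehat{\sym}(\calL^\vee)\otimes\sym(\calL)$, in the spirit of Lemma \ref{lemma-fc}, must be used, together with the sign bookkeeping coming from the shuffle products and the sign $\alpha$ of \eqref{character}.
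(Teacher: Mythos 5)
Your proposal is correct and follows essentially the same route as the paper: restrict the explicit formula \eqref{character} to chains of functions embedded via $j_{\nabla}$, use the degree decomposition of $A(\nabla)$, $\theta_i$ and $j_{\nabla}(f)$ from Lemma \ref{lemma-fc} and Proposition \ref{depconn} to discard all higher-order components, and identify the surviving factor with $\Td_{\calL}(\calL)\Ch_{\calL}(\calE)$ via Theorem \ref{index-theorem} and the local Riemann--Roch theorem. The only imprecision is in how the HKR factor emerges: it is not that $\tau^{w}_{2r}$ restricts to a de Rham cochain on the position subalgebra, but that the degree-one jets $j^{1}_{\nabla}(f_i)=\rho^{*}(df_i)$ pair against the momentum parts of the $A_{-1}$ insertions through the operators $\pi_{ji}$ in $\tau_{2r}$ --- exactly the degree count you flag as the main obstacle, which the paper carries out explicitly.
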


\begin{proof}

First remark that in the definition of the character map $f\in\calO_{X}$ is embedded in the sheaf of Weyl algebras $\calW$ as
\[
j_{\nabla}(f)=e^{\nabla^{s}}f=j_{\nabla}^{0}(f)+j_{\nabla}^{1}(f)+\ldots\in\widehat{\sym}(\calL^{\vee}).
\]
Analogous to the proof of Theorem \ref{comp-hkr}, we now have to consider the forms
\begin{equation}
\label{hkr-comp}
\tag{$\star$}
\iota_{\theta_{1}}\ldots\iota_{\theta_{p}}\tau^{W}_{2r}((A)_{2r-k-p}\times j_{\nabla}(f_{0})\otimes \ldots\otimes j_{\nabla}(f_{k})).
\end{equation}
We will evaluate these forms using the expansion of $A(\nabla)$ from Lemma \ref{lemma-fc} and the expansion of $\theta_i$ given in 
\ref{depconn}.
Write the cocycle \eqref{Hoch-cocycle} as $\tau_{2r}=\mu_{2r}\circ S_{2r}\circ \pi_{2r}$.  Fix a local basis $\{e_{i},~i=1,\ldots, r\}$ of $\calL$,
and denote the dual basis of $\calL^{\vee}$ by $\{de_{i}\}$. Recall the notation  $\deg_{\calL^\vee}(a)$ and $\deg_\calL(a)$ for $a\in \calW(\calL)$.
For elements $a_0\otimes\ldots\otimes a_k$ we have $\deg_\calL(a_0\otimes \ldots \otimes a_k)=\sum_i \deg_\calL(a_i)$ and $\deg_{\calL^\vee}(a_0\otimes \ldots \otimes a_k)=\sum_i \deg_{\calL^\vee}(a_i)$. 
Note that $\mu_{2r}$ is the projection on the $\deg_\calL=\deg_{\calL^\vee}=0$ part of $a_0\otimes\ldots a_{2r}$ followed by multilplication.
We claim that in the expression of $\star$ we can make, without changing the resulting espression, the following replacements;
\begin{align*}
A\rightsquigarrow& A_{-1} + A_1\\
j_{\nabla}(f_0)\rightsquigarrow& j_{\nabla}^{0}(f_{0})\\
 j_{\nabla}^{1}(f_{i})\rightsquigarrow& j_{\nabla}^{1}(f_{i}),\quad i\geq 1 \\
\theta_l\rightsquigarrow& \theta_{l}^{2}.
\end{align*}
By $\theta_l^2$ we mean the $\deg_{\calL^\vee}=2$ part of $\theta_l$. Consider $A_k$ with $k\geq 2$ in the $i$'th slot. We have $\deg_\calL(A_k)=1$ and $\deg_{\calL^\vee}(A_k)=k+1$, 
hence an application of $\pi_{2r}$ will give an element of either $(\deg_{\calL^\vee},\deg_\calL)=(k+1,0)\;\;\text{or}\;\; (k,1)$. An application 
of $e^{s\alpha_{ij}}$ will only give a nonzero $(0,0)$ term if the term in the $j$'th slot has degree $(0,k)$ or $(1,k-1)$, and 
these terms are not present. For $\theta_l$ an analogous reasoning applies. Now consider $j_{\nabla}(f_i)$ in the $i$'th slot. The term $j^0_\nabla(f_i)$ does not
survive $\pi_{2r}$, and $\pi_{2r}$ applied to terms $j^k_{\nabla}$ with $k\geq 2$ gives terms of degree $(k-1,0)$, to which an application of $e^{s\alpha_{ij}}$
only gives a nonzero $(0,0)$ term if the term in the $j$'th slot has degree $(0,k-1)$. Again, these terms are not present; here it is used that $A_0=0$.
In conclusion:
\begin{align*}
(\star)\;=\;\iota_{\theta_1^2}\ldots\iota_{\theta_p^2}\tau^{W}_{2r}((A_{-1}+A_{1})_{2r-k-p}\times j_{\nabla}^0(f_0)\otimes j_{\nabla}^1(f_1)\otimes\ldots\otimes j_{\nabla}^1(f_k)).
\end{align*}
We can still eliminate terms from this expression. We have for general elements $a_0\otimes\ldots\otimes a_{2r}\in \calW(\calL)\otimes\ldots\otimes\calW(\calL)$:
\begin{align*}
\deg_{\calL^\vee}(\alpha_{ij}(a_0\otimes\ldots\otimes a_{2r}))&=\deg_{\calL^\vee}(a_0\otimes\ldots\otimes a_{2r})-1\\
\deg_{\calL^\vee}(\pi_{2r}(a_0\otimes\ldots\otimes a_{2r}))&=\deg_{\calL^\vee}(a_0\otimes\ldots\otimes a_{2r})-r
\end{align*}
and similar formulas for the $\calL$ degree. This implies that the only terms in $a_0\otimes\ldots\otimes a_{2r}$ that have
a nonzero contribution after an application of $\tau_{2r}$ have equal $\calL$ and $\calL^\vee$ degrees. 
It follows that
\begin{align*}
(\star)\;=\tau^{W}_{2r}((\theta^2_p)\times\ldots\times(\theta^{2}_{1})\times (A_{1})_{r-k-p}\times (A_{-1})_{r}\times j_{\nabla}^0(f_0)\otimes j_{\nabla}^1(f_1)\otimes\ldots\otimes j_{\nabla}^1(f_k)).
\end{align*}
The first order jets $j_{\nabla}^{1}(f_{i})=\sum_{j}\rho(e_{j})(f_{i})de_{j},~i=1,\ldots,k$ of the $f_{i}$ combine
with $A_{-1}=\sum_{i}1\otimes e_{i}\otimes de_{i}$, after applying $\pi_{2r}$, to the forms $\rho^{*}(df_{i})$, so that we have:
\begin{align*}
(\star)\;=&\;\frac{1}{k!}f_{0}\rho^{*}df_{1}\wedge\ldots\wedge \rho^{*}df_{k}\pi_{2r} ((\theta_{p})\times\ldots\times (\theta_{1})\times\underbrace{\pi\times\ldots\times\pi}_{\#=k}\times (A_{-1})_{r-k}\times (A_{1})_{r-k-p})\\
=&f_{0}\rho^{*}df_{1}\wedge\ldots\wedge \rho^{*}df_{k}\iota_{\theta_{1}}\ldots\iota_{\theta_{p}}\frac{\iota^{k}_{\pi}}{k!}\tau^{W}_{2r}((A_{-1})_{r-k}\times (A_{1})_{r-k-p})\\[10 pt]
=&f_{0}\rho^{*}df_{1}\wedge\ldots\wedge \rho^{*}df_{k}u^{-n}[\Td_{\calL}(\calL)\Ch(\calE)]
\end{align*}
Where we used the definition of the cyclic cocycle using the insertion $\iota_{\pi}$, and
Theorem \ref{index-theorem}.
\end{proof}

\subsection{Holomorphic Lie algebroids}
In this final subsection we give an alternative derivation of our main index theorem for the special case of a holomorphic Lie algebroid. 
This alternative approach is possible because in this case there is, besides the \v{C}ech resolution used so far, an alternative Dolbeault-type resolution
to compute the Lie algebroid cohomology constructed in \cite{lgsx}. 
Let $X$ be a complex manifold with holomorphic tangent bundle $TX$ and structure sheaf $\calO_{X}$ consisting of germs of holomorphic functions. 
A {\rm holomorphic Lie algebroid} is a holomorphic vector bundle
$L\to X$ equipped with a vector bundle map $\rho:A\to TX$ so that the sheaf of holomorphic sections $\calL$ becomes a Lie algebroid 
in the sense of section \ref{def-la}. 

We start by recalling the Dolbeault complex of \cite{lgsx}: introduce
\[
\calA^{p,q}_{L}:=\Gamma^{\infty}\left(X,\bigwedge^{p} (T^{0,1}X)^{\vee}\otimes\bigwedge^{q}(L^{1,0})^{\vee}\right).
\]
The structure of the pair $(T^{0,1}X,L^{1,0})$, called a ``matched pair of Lie algebroids'' in \cite{lgsx}, then gives:
\begin{itemize}
\item[$i)$] As a smooth vector bundle, $L^{1,0}$ is a module over $T^{0,1}X$. The same is true for all duals and exterior powers of $L^{1,0}$, and results in a differential
 $\bar{\partial}_{L}:\calA_{L}^{p,q}\to \calA_{L}^{p+1,q}$ given by the Lie algebroid differential \eqref{Koszul} of the complex
Lie algebroid $T^{0,1}X$ with coefficients in the module $\bigwedge^{q}(L^{1,0})^{\vee}$.
\item[$ii)$] The anchor map $\rho$ defines a $L^{1,0}$-module structure on $T^{0,1}X$ by the formula $\nabla_{\alpha}X:=pr_{0,1}([\rho(\alpha),X])$.
 Again, this carries over to exterior powers of the dual, and gives the 
second differential $d_{L}:\calA^{p,q}\to\calA_{L}^{p,q+1}$ with coefficients in 
$\bigwedge^{p}(T^{0,1}X)^{\vee}$.
\end{itemize}
The two differentials commute, $[\bar{\partial}_{L},d_{L}]=0$, so we can form the total complex.
The kernel of the differential
$\bar{\partial}_{L}:\calA_{L}^{0,\bullet}\to \calA_{L}^{1,\bullet}$ is precisely the holomorphic Lie algebroid complex $(\Omega_{L}^{\bullet},d_{L})$, and the main 
theorem of \cite{lgsx} states that $\calA_{L}^{\bullet,\bullet}$ is a fine resolution of this complex so that the total complex of $\calA_{L}^{\bullet,\bullet}$ computes
the Lie algebroid cohomology of $(\calL,\calO_{X})$. In the case $L^{1,0}=T^{1,0}X$ of the holomorphic tangent bundle, this is just the Dolbeault resolution.

To apply this resolution for our construction of the character map, we tensor with the smooth jet bundle and universal enveloping algebra to obtain the complex
\begin{equation}
\label{Dolbeault-dga}
\left(\calJ^{\infty}_{L^{1,0}}\otimes_{C^{\infty}_{X_{2}}}\calU(L^{1,0})\otimes_{C^{\infty}_{X_{1}}}\calA^{p,q}_{L},\nabla^{(1)}+\bar{\partial}_{L}\right).
\end{equation}
This time the kernel of the differential $\bar{\partial}_{L}$ in degree $0$ is precisely the sheaf of DGA's of Lemma \ref{lemdga} for the Lie algebroid $\calL$. 

Next we choose a torsion free $L^{1,0}$-connection $\nabla$ on $L^{1,0}$  Remark that in this $C^{\infty}$-setting, such a connection always. exists. By our PBW-theorem,
this induces an isomorphism
\[
j_{\nabla}:\calJ^{\infty}_{L^{1,0}}\stackrel{\cong}{\longrightarrow}\widehat{\sym}(L^{1,0})^{\vee}.
\]
Using \ref{ncpbw}, this induces an isomorphism of the total space of the  DGA \eqref{Dolbeault-dga} with $\calW_{L^{1,0}}\otimes_{C^{\infty}_{X}}\calA_{L}^{\bullet,\bullet}$, also denoted
$j_{\nabla}$, and allows us to introduce
\[
A(\nabla):=j_{\nabla}\circ \nabla^{(1)}\circ j_{\nabla}^{-1}-(\nabla+\overline{\partial}_{L})
\]
Remark that the flat connection $\nabla^{(1)}+\overline{\partial}_{L}$ on $\calJ^{\infty}_{L^{1,0}}\otimes_{C^{\infty}_{X_{2}}}\calU(L^{1,0})$, 
when transferred to $\calW_{L^{1,0}}$, does not necessarily have the form $j_{\nabla}\circ \nabla^{(1)}\circ j_{\nabla}^{-1}+\overline{\partial}_{L}$ and
hence $A$ has both components in the $T^{0,1}$ and $L^{1,0}$ direction. This would be the case if a global holomorphic connection would exist, because 
then the $T^{0,1}$-module structures on $\calJ^{\infty}_{L^{1,0}}$ and $\calW_{L^{1,0}}$ are be respected by $j_{\nabla}$.

\begin{theorem}
Let $L\to X$ be a holomorphic Lie algebroid over a complex manifold, and $E\to X$ a holomorphic vector bundle. 
\begin{itemize}
\item[$i)$] A pair $(\nabla^{L},\nabla^{E}$ of $L$-connections on $L$ and $E$ compatible with the holomorphic structure, with $\nabla^{L}$ torsion free,  induces a character map
\[
\Phi_{\nabla}:\left(CC_{\bullet}(\calU(\calL;E)),b+uB\right)\to \left(\bigoplus_{p+q=2r-\bullet}\calA^{p,q}_{L}[u],d_{L}+\bar{\partial}_{L}\right)
\]
\item[$ii)$] For two connections $\nabla$ and $\nabla'$, the maps $\Phi_{\nabla}$ and $\Phi_{\nabla'}$ are chain homotopic.
\item[$iii)$] {\rm (Index Theorem)} The following equality of differential forms holds true:
\[
\Phi_{\nabla}(1)={\rm Td}_{L}{\rm Ch}_{E}
\] 
\end{itemize}
\end{theorem}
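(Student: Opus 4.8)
The plan is to run the Fedosov-type argument of Theorems~\ref{character-map} and \ref{index-theorem} almost verbatim, replacing the \v{C}ech resolution of $(\Omega^{\bullet}_{\calL},d_{\calL})$ throughout by the Dolbeault-type resolution $\calA^{\bullet,\bullet}_{L}$ of \cite{lgsx}. The structural point is that $\calA^{\bullet,\bullet}_{L}$ is a resolution by \emph{fine} sheaves, so the Lie algebroid hypercohomology is computed by the honest total complex $(\bigoplus_{p+q=\bullet}\calA^{p,q}_{L},d_{L}+\bar{\partial}_{L})$; consequently no simplicial nerve and no integration over \v{C}ech simplices is needed, and the derived-category statement of Theorem~\ref{character-map} is here replaced by an honest chain map.

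For part $i)$ I would first transport the flat DGA structure of \eqref{Dolbeault-dga} along the $E$-twisted noncommutative PBW isomorphism of Theorem~\ref{ncpbw} (Remark~\ref{twisted-remark2}) to a DGA structure on $\calW_{L^{1,0}}\otimes_{C^{\infty}_{X}}\calA^{\bullet,\bullet}_{L}$; the transported differential differs from $\nabla^{L}+\nabla^{E}+\bar{\partial}_{L}$ by $\ad A(\nabla)$, with $A(\nabla)$ as in the statement having components in both the $T^{0,1}X$ and $L^{1,0}$ directions, and flatness of $\nabla^{(1)}+\bar{\partial}_{L}$ becomes, exactly as in Lemma~\ref{lemma-mc}, the Maurer--Cartan equation $R(\nabla^{L})+R(\nabla^{E})+(\nabla^{L}+\nabla^{E}+\bar{\partial}_{L})A(\nabla)+\tfrac12[A(\nabla),A(\nabla)]=0$ for the \emph{total} curvature with respect to the bigrading. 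The character map is then $\Phi_{\nabla}(D_{0}\otimes\dots\otimes D_{l}):={\rm ev}_{1}\,\tau^{w}_{2r}\bigl(1\otimes\exp(\wedge A(\nabla))\times(D_{0}\otimes\dots\otimes D_{l})\bigr)$, with values in $\bigoplus_{p+q=2r-\bullet}\calA^{p,q}_{L}[u]$. That this is a morphism of complexes, $\Phi_{\nabla}(bD)+\Phi_{\nabla}(BD)=\pm(d_{L}+\bar{\partial}_{L})\Phi_{\nabla}(D)$, follows from the identities of Lemma~\ref{shuffle} (with $\nabla$ there read as $\nabla^{L}+\nabla^{E}+\bar{\partial}_{L}$, so that $b((A)^{k})=(\nabla^{L}+\nabla^{E}+\bar{\partial}_{L})((A)^{k-1})$ by Maurer--Cartan) together with $\nabla D_{i}+[A,D_{i}]=0$, precisely as in \cite{ef,ppt} and in the proof of Proposition~\ref{mapofcomplexes}, but now with no $\theta$-insertions and no $\Delta^{k}$-integration, so the computation is considerably shorter. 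For part $ii)$, given $\nabla$ and $\nabla'$ I would form the affine family $\nabla_{t}=(1-t)\nabla+t\nabla'$ and repeat the construction over $X\times\Delta^{1}$ for the direct sum Lie algebroid $L\oplus\calT_{1}$; with $\theta_{t}$ the derivation produced by Proposition~\ref{depconn} (twisted variant, Remark~\ref{twisted-remark2}) from $d\nabla_{t}/dt$, the operator $h(D):=\int_{0}^{1}\iota_{\theta_{t}}\,\tau^{w}_{2r}\bigl(1\otimes\exp(\wedge A(\nabla_{t}))\times D\bigr)\,dt$ is the chain homotopy, the identity relating $h$, $b+uB$ and $d_{L}+\bar{\partial}_{L}$ being the one-simplex case of the computation of Proposition~\ref{mapofcomplexes} (Lemma~\ref{depconn2} plus Stokes on $\Delta^{1}$).

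For part $iii)$ I would evaluate $\Phi_{\nabla}$ on the cycle $1\in CC_{0}$. By the local Riemann--Roch Theorem~\ref{lrr} for the fundamental cocycle on the Weyl algebra, ${\rm ev}_{1}(\tau^{W}_{2r})=u^{-r}\chi(P)$ with $P=\Td\cdot\Ch$ the invariant polynomial \eqref{cherntodd}, so $\Phi_{\nabla}(1)=u^{-r}\chi(P)(A,\dots,A)$. Taking $\nabla^{L}$ torsion free, Lemma~\ref{lemma-fc} gives $\pi(A)=0$, so in the Chern--Weil evaluation the only surviving curvature term is $C(A,A)=-\pi([A,A])=\pi\bigl((\nabla^{L}+\nabla^{E}+\bar{\partial}_{L})A+R(\nabla^{L})+R(\nabla^{E})\bigr)=\pi\bigl(R(\nabla^{L})+R(\nabla^{E})\bigr)$, the total curvature in the Dolbeault model. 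Hence $\Phi_{\nabla}(1)$ is the Chern--Weil representative of $\Td(R(\nabla^{L}))\,\Ch(R(\nabla^{E}))$ built inside $(\calA^{\bullet,\bullet}_{L},d_{L}+\bar{\partial}_{L})$, which represents the same Lie algebroid cohomology class as the simplicial construction of Section~\ref{cw}, namely $\Td_{L}(L)\,\Ch_{L}(E)$ (the $u$-power being absorbed by the choice $w=u^{r}$).

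The main obstacle is the bookkeeping forced by having two differentials $d_{L}$ and $\bar{\partial}_{L}$ and the bidegree on $\calA^{\bullet,\bullet}_{L}$: one must check that $A(\nabla)$ genuinely acquires a $\bar{\partial}_{L}$-component (so that the argument does not collapse onto the holomorphic side), that the Maurer--Cartan and chain-map identities hold for the \emph{total} curvature and differential rather than merely their $L^{1,0}$-parts, and that the Chern--Weil representative assembled from $R(\nabla^{L})+R(\nabla^{E})$ in the Dolbeault complex indeed represents the classes $\Td_{L}(L)$ and $\Ch_{L}(E)$ of Section~\ref{cw} --- the last point being the comparison of the two fine/\v{C}ech resolutions of $(\Omega^{\bullet}_{\calL},d_{\calL})$, a standard but slightly technical argument.
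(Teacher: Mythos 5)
Your proposal is correct and follows exactly the route the paper intends: the paper in fact states this theorem without a written proof, relying on the preceding setup (the Dolbeault resolution $\calA^{\bullet,\bullet}_{L}$, the transported differential, and the element $A(\nabla)$ with components in both the $T^{0,1}X$ and $L^{1,0}$ directions), and your writeup supplies precisely the adaptation of Proposition~\ref{mapofcomplexes}, Theorem~\ref{index-theorem} and the local Riemann--Roch theorem that this setup calls for, including the correct observations that no \v{C}ech nerve or $\theta$-insertions are needed for $i)$, that the one-simplex case of the \v{C}ech computation yields the homotopy in $ii)$, and that torsion-freeness kills $\pi(A)$ in $iii)$.
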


\appendix
\section{Hochschild and cyclic homology}
\label{cyclic}
\subsection{Hochschild theory}
Here we briefly recall the basic notions of Hochschild and cyclic homology as well as their dual cohomology versions. An extensive introduction into the subject is given in \cite{loday}. Let $A$ be a unital algebra over a field $\mathbb{K}$, and 
write $\bar{A}:=A\slash 1\C$. For any $A$-bimodule $M$, the (reduced) Hochschild chain complex is given by the graded vector space $C_\bullet(A;M)$, where
$C_k(A):=M\otimes \bar{A}^{\otimes}$, with differential $b:C_k(A;M)\to C_{k-1}(A;M)$ given by
\begin{align*}
b(m\otimes &a_{1}\otimes \ldots\otimes a_k):=m\cdot a_{1}\otimes a_{2}\otimes\ldots\otimes a_{k}+\\
&+\sum_{i=1}^{k-1} (-1)^im\otimes a_{1}\otimes\ldots\otimes a_ia_{i+1}\otimes\ldots\otimes a_k+(-1)^ka_k\cdot m\otimes a_1\otimes\ldots\otimes a_{k-1}.
\end{align*}
One easily checks that $b^2=0$ and the resulting homology groups are called the Hochschild homology
of $A$ with values in $M$, written $H_\bullet(A;M)$.

Dually, the cochain complex with $C^{k}(A;M):=\Hom_{\mathbb{K}}(A^{\otimes k},M)$ and the dual differential (also written $b$), define the Hochschild cohomology groups 
$H^{\bullet}(A;M)$.

\subsection{Cyclic theory}
There exists a degree increasing differential on the Hochschild chain complex $B:\overline{C}_k(A)\to \overline{C}_{k+1}(A)$
given by
\[
B(a_0\otimes\ldots\otimes a_k):=\sum_{i=0}^k(-1)^{ik}1\otimes a_i\otimes a\ldots\otimes a_k\otimes a_0\otimes\ldots \otimes a_{i-1}.
\]
This operator satisfies $B^2=0$ and $[b,B]=0$. To define the chain complexes for the various versions of cyclic homology, 
we add, following \cite{gj} a formal variable $u$ of degree $-2$ and set
\begin{align*}
CC_{\bullet}^{W}(A):=C_{\bullet}(A)[[u]]\otimes_{\gf[u]}W,
\end{align*}
where $W$ is a $\gf[u]$-module, and equip it with the differential $b+uB$. The various choices for $W$ lead to the following theories:
\begin{itemize}
\item $W=\gf$ gives us back the Hochschild homology.
\item $W=\gf[u,u^{-1}]\slash u\gf[u]$ leads to (ordinary) cyclic homology, denoted by $HC_{\bullet}(A)$.
\item $W=\gf[u,u^{-1}]$ leads to periodic cyclic homology, denoted $HP_{\bullet}(A)$. It has period $2$ with periodicity operator induced by multiplication by $u$.
\item $W=\gf[u]$ leads to negative cyclic homology, denoted $HC^{-}_{\bullet}(A)$.
\end{itemize}
Short exact sequences of $\gf[u]$-modules lead to long exact sequences in homology. For example, the usual SBI-sequence is induced from
the short exact sequence 
\[
0\longrightarrow \gf[u,u^{-1}]\slash \gf[u]\stackrel{S}{\longrightarrow} \gf[u,u^{-1}]\slash u\gf[u]\longrightarrow \gf\longrightarrow 0. 
\]
On the other hand, the short exact sequence
\[
0\longrightarrow u\gf[u]\longrightarrow \gf[u,u^{-1}]\longrightarrow \gf[u,u^{-1}]\slash u\gf[u]\longrightarrow 0
\]
leads to the long exact sequence 
\[
\ldots \longrightarrow HC^{-}_{n+2}(A)\longrightarrow HP_{n}(A)\longrightarrow HC_{n}(A)\longrightarrow HC_{n+1}^{-}\longrightarrow \ldots
\]
relating negative and periodic cyclic homology.
Finally, for the cohomology theories we use the dual complexes $CC^{\bullet}_{W}(A):=C_{\bullet}(A)[[u^{-1}]]\otimes_{\gf[u^{-1}]}W$ equipped with the differential $b+u^{-1}B$.

\subsection{Operations on Hochschild and cyclic cohomology}
In this paper we need a few operations on the Hochschild and cyclic (co)chain complexes, which are part of a more richer ``calculus''.
Let $A$ be a unital algebra and pick $a\in A$.
The insertion operator $\iota_{a}:C_k(A)\rightarrow C_{k+1}(A)$ is defined by
\[
\iota_a a_0\otimes\dots\otimes a_k:=\sum_{i=0}^{k}(-1)^{i+1} a_0\otimes\dots\otimes a_i\otimes a\otimes a_{i+1}\otimes\dots\otimes a_k)
\]
Then there is a lie algebra action of $\mathfrak{gl}(A)$, the algebra $A$ equipped with the commutator bracket,  on $C_k(A)$:
\begin{equation}
L_a=[b,i_a],
\end{equation}
which is explicitly given by
\[
L_a(a_0\otimes\dots\otimes a_k)=\sum_{i=0}^{k}(a_0\otimes\dots\otimes[a,a_i]\otimes \dots,a_k).
\]
Dually, these formulas induce actions on $C^k(A)$, and extending them by linearity induces actions on the cyclic and periodic complexes.\\
 The shuffle product, only defined when $A$ is a unital, graded algebra, of $a_0\otimes\dots\otimes a_p$ and $b_0\otimes\dots\otimes b_q$ is given by
\begin{align*}
(a_0\otimes\dots\otimes a_p)\times (b_0\otimes\dots\otimes b_q)=&\\
=(-1)^{deg(b_0)(\sum_i deg(a_i))}Sh_{p,q}&(a_0b_0\otimes a_1\otimes\dots\otimes a_p\otimes b_1\otimes\dots\otimes b_q),
\end{align*}
where
\[
Sh_{p,q}(c_0\otimes\dots\otimes c_{p+q})=\sum_{\sigma\in S_{p,q}}sgn(\sigma)c_0\otimes c_{\sigma(1)}\otimes\dots c_{\sigma(p+q)}
\]
and $S_{p,q}\subset S_{p+q}$ consists of the $p,q$ shuffle permutations in $S_{p+q}$.

\subsection{The map to Lie algebra cohomology}
\label{mor-lie}
Let $\g$ be a Lie algebra over $\gf$. The Lie algebra cochain complex, written  $C^{\bullet}_{\rm Lie}(\g)$, is a special case of the Lie--Rinehart complex \eqref{Koszul} with base ring $R=\gf$, $L=\g$ and $M=\gf$. 
Consider the cyclic cochain complex $CC^{\bullet}_{W}(A)$ for a $\gf[u^{-1}]$-module $W$. The evaluation map 
\[
{\rm ev}_{1}(\phi)(a_{1},\ldots,a_{k}):=\sum_{\sigma\in S_{k}}(-1)^{\sigma}\phi(1\otimes a_{\sigma(1)}\otimes\ldots\otimes a_{\sigma(k)}),\quad a_{1},\ldots a_{k}\in A,
\]
defines a morphism of cochain complexes
\begin{equation}
\label{mtolie}
{\rm ev}_{1: }\left(CC^{\bullet}_{W}(A),b+u^{-1}B\right)\longrightarrow \left(C^{\bullet}_{\rm Lie}(\mathfrak{gl}(A))[[u^{-1}]]\otimes_{\gf[u^{-1}]}W,d_{\rm Lie}\right),
\end{equation}
where $\mathfrak{gl}(A)$ denotes the Lie algebra associated to $A$, i.e., $A$ equipped with the commutator.

\section{The local Riemann--Roch theorem}
\label{LRR}
\subsection{The Weyl algebra}
\label{weyl}
Let $\calO_{n}=\gf[x^{1},\ldots,x^{n}]$ be the polynomial algebra on $n$ generators. The Weyl algebra $W_{n}$ is,
by definition, the algebra of differential operators on $\calO_{n}$. Over $\calO_{n}$, it is generated by the fundamental
derivations $\partial\slash\partial x^{i},~i=1,\ldots,n$ subject to the well-known commutation relations
\[
[\frac{\partial}{\partial x^{i}},x^{j}]=\delta_{ij}.
\]
It therefore admits another presentation as the space of polynomials $\gf[q^{1},\ldots,q^{n},p_{1},\ldots,p_{n}]$ equipped with the Moyal--Weyl product
\begin{equation}
\label{moyal-weyl}
f\star g:=m\circ e^{\pi}(f\otimes g),\quad f,g\in \gf[q^{1},\ldots,q^{n},p_{1},\ldots,p_{n}],
\end{equation}
where $m$ is the commutative product of polynomials and
\[
\pi:=\sum_{i=1}^{n}\left(\frac{\partial}{\partial p_{i}}\otimes\frac{\partial}{\partial q^{i}}-\frac{\partial}{\partial q^{i}}\otimes \frac{\partial}{\partial p_{i}}\right).
\]
The isomorphism with $W_{n}$ is simply given by sending $x^{i}\mapsto q^{i},~p_{i}\mapsto \partial\slash\partial x^{i}$. 
The center of the Weyl algebra is clearly equal to $\gf$, and, since every derivation is inner, we have the exact sequence of Lie algebras
\begin{equation}
\label{derw}
0\longrightarrow\gf\longrightarrow W_{n}\stackrel{\ad}{\longrightarrow}\Der(W_{n})\longrightarrow 0.
\end{equation}

\subsection{The Hochschild cocycle} 
Using the spectral sequence associated to the filtration given by the degree of a polynomial, it was proved in \cite{ft} that
\[
HH_{\bullet}(W_{n})=\begin{cases} \gf & \bullet=2n\\ 0&\bullet\not = 2n.
\end{cases}
\]
A generator for the nontrivial class in degree $2n$ is given by
\[
c_{2n}:=\sum_{\sigma\in S_{2n}}(-1)^{\sigma}1\otimes y_{\sigma(1)}\otimes\ldots y_{\sigma(2n)},\quad y_{2i}=q_{i},~y_{2i-1}=p_{i},~i=1,\ldots,n.
\]
In \cite{ffs}, a dual Hochschild cocycle generating the only nontrivial class was defined by the formula
\begin{equation}
\label{Hoch-cocycle}
\tau_{2n}^{\rm Hoch}(a):=\mu_{2n}\int_{\Delta^{2n}}\prod_{0\leq i<j\leq 2n}e^{(t_{j}-t_{i}-\frac{1}{2})\pi_{ji}}(1\otimes\pi^{\wedge n})(a)dt_{1}\cdots dt_{2n},
\end{equation}
where $a=a_{0}\otimes\ldots a_{2n}\in W_{n}^{\otimes(2n+1)}$ and $\mu_{2n}(a_{0}\otimes\ldots a_{2n})=a_{0}(0)\cdots a_{2n}(0)$ is the evaluation
at zero followed by the commutative multiplication. That this cocycle exists follows simply by duality, and indeed we have the property that
\begin{equation}
\label{normalization}
\tau_{2n}(c_{2n})=1.
\end{equation}
However, the explicit expression above 
allows to prove the surprising fact the cocycle $\tau_{2n}^{\rm Hoch}$ is $\mathfrak{gl}(n,\gf)$-invariant and basic: 
\begin{equation}
\label{basic}
L_X\tau_{2n}^{\rm Hoch}=0,\quad \tau_{2n}^{\rm Hoch}(\ldots,X,\ldots)=0,\quad X\in \mathfrak{gl}(n,\gf).
\end{equation}

\subsection{The fundamental cyclic cocycle}
In \cite{ppt} and \cite{willwacher} the Hochschild cocycle $\tau_{2n}^{\rm Hoch}$ was extended to a full cyclic cocycle in the $(b,B)$-complex.
Here we follow the presentation of \cite{willwacher}. Define the insertion operator 
\[
\iota_{\pi}=\sum_{i=1}^{n}\iota_{p_{i}}\iota_{q^{i}}:C^{k}(W_{n},W_{n}^{*})\to C^{k-2}(W_{n},W_{n}^{*}).
\]
With this notation, for each choice of $w\in W\backslash\{0\}$, the cochain 
\[
\tau_{2n}^{w}:=e^{-u^{-1}\iota_{\pi}}\tau^{\rm Hoch}_{2n}\otimes w\in CC^{2n}_{W}(W_{n})
\]
is closed: $(b+u^{-1}B)\tau_{2n}^{w}=0$. For $w=1$ in ordinary cyclic homology one can expand
\[
\tau_{2n}^{1}=\tau^{\rm Hoch}_{2n}+u^{-1}\tau_{2n-2}+\ldots + u^{-n}\tau_0\in C^{2n}(D_n)[[u,u^{-1}]\slash u C^{2n}(D_n)[[u]],
\]
with $\tau_{2n-2k}=\iota_{\pi}^{k}\tau_{2n}^{\rm Hoch}\slash k!$. It is possible to perform the integration 
to obtain an explicit formula for for the lower degrees $\tau_{2k}$ as an integral over the simplex $\Delta^{2k}$ analogous to \eqref{Hoch-cocycle}, c.f.\ \cite{ppt}.

For the twisted case, we need a slight generalization of this cocyle. 
The algebra $W_{n}^{r}:=M_r(W_{n})$ is Morita equivalent to $W_{n}$, thus has the same Hochschild and cyclic homology. 
The cyclic cocyle can be generalized by the formula
\[
\tau^{w,r}((a_0\otimes M_0)\otimes\ldots\otimes (a_k\otimes M_k)):=\tau^{w}_{2n}(a_0\otimes\ldots \otimes a_k){\rm tr}(M_0\ldots M_k).
\]
The algebra $W_{n}^{r}$ contains the Lie subalgebra $\mathfrak{sp}_{2n}\oplus \mathfrak{gl}_{r}$, and the cocycle is basic and 
invariant w.r.t. this subalgebra, c.f. \cite{willwacher}.

\subsection{The local Riemann--Roch theorem in Lie algebra cohomology}
The most fundamental property of the cyclic cocycle $\tau_{2n}^{w}$ is the local Riemann--Roch theorem which links it in Lie algebra cohomology 
the characteristic classes appearing in the index theorem. This theorem has a long history dating back to \cite[Thm 5.1.1.]{ft} where it first appeared in abstract form for 
Hochschild class of $\tau_{2n}^{\rm Hoch}$. Other appearances of the theorem are in \cite{bnt,ffs,ppt,willwacher}. We shall state the theorem below as an equality 
on the level of {\em chains}, the proof follows as in \cite{willwacher} by evaluating the integrals appearing in the formula for $\tau_{2n}^{w}$. 

Let us first recall the Chern--Weil homomorphism in Lie algebra cohomology: Let $\h\subset\g$ be an inclusion of Lie algebras, and suppose that
there exists an $\h$-equivariant projection $\pi:\g\to\h$. Then $\pi$ determines, in the language of Lie algebroids, a $\g$-connection on $\h$ by 
the formula 
\[
\nabla_{X}Z=\pi([X,Z]), \quad \mbox{for}~ X\in\g, Z\in\h,
\]
with curvature $R\in\Hom(\bigwedge^{2}\g,\h)$ given by
\[
C(X,Y)=[\pi(X),\pi(Y)]-\pi([X,Y]),\quad X,Y\in\g.
\]
The Chern--Weil construction, explained in \S \ref{cw} for the more general case of Lie--Rinehart algebras then leads to a homomorphism 
\[
\chi:\left(\sym^{\bullet}\h^{\vee}\right)^{\h}\to C^{2\bullet}_{\rm CE}(\g,\h),
\]
explicitly given by the formula
\[
\chi(P)(X_{1},\ldots,X_{2k}):=\frac{1}{k!}\sum_{\sigma\in S_{2k}\atop \sigma(2i-1)<\sigma(2i)}(-1)^{\sigma}P\left(C(X_{\sigma(1)},X_{\sigma(2)}),\ldots,C(X_{\sigma(2k-1)},X_{\sigma(2k)})\right),
\]
where $X_{1},\ldots,X_{2k}\in\g$ and $P\in \sym^{k}\h^{\vee}$.  For the Riemann--Roch theorem we need this construction for the inclusion 
\[
\mathfrak{gl}_{n}\oplus\mathfrak{gl}_{r}\subset \mathfrak{gl}(D_{n})\oplus\mathfrak{gl}_{r}(\calO_{n}).
\]
Recall the invariant polynomials \eqref{cherntodd} defining the Chern and Todd classes.
\begin{theorem}[Local Riemann--Roch] 
\label{lrr}
Under the $u$-linear extension of the morphism \eqref{mtolie}, we have the following equality of Lie algebra cochains:
\[
ev_{1}(\tau_{2k}^{w,r})=(-1)^{k}\chi(\Td_{n}\Ch_{r})_{2k}\otimes w\in C^{2k}\left(\mathfrak{gl}(W_{n})\oplus \mathfrak{gl}_{r}(\calO_{n});\mathfrak{gl}_{n}\oplus \mathfrak{gl}_{r}\right).
\]
\end{theorem}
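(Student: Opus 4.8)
The plan is to prove the identity by the method of \cite{ffs,ppt,willwacher}: directly evaluating the iterated integral in the formula \eqref{Hoch-cocycle} defining $\tau_{2n}^{\rm Hoch}$ together with its matrix extension $\tau^{w,r}$. The only genuinely new bookkeeping, compared to the untwisted, purely symplectic situation treated there, is the simultaneous presence of the $\mathfrak{gl}_r$-factor and of the polarization subalgebra $\mathfrak{gl}_n\subset\mathfrak{gl}(W_n)$ inside $W_n$.

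\emph{Step 1: reduction to top degree.} Since $\iota_\pi=\sum_i\iota_{p_i}\iota_{q^i}$ is built purely from the algebraic insertion operators of \S\ref{cyclic} and ${\rm ev}_1$ is the antisymmetrization morphism \eqref{mtolie}, one checks that under ${\rm ev}_1$ the operator $\iota_\pi$ corresponds to the constant-coefficient contraction $\iota_C$ by the Poisson bivector on the Chevalley--Eilenberg side. Combined with the identity $\tau_{2k}=\iota_\pi^{\,n-k}\tau_{2n}^{\rm Hoch}/(n-k)!$ recalled in \S\ref{LRR}, this reduces the whole statement to the case $k=n$, i.e. to ${\rm ev}_1(\tau_{2n}^{\rm Hoch},r)=(-1)^n\chi(\Td_n\Ch_r)_{2n}$, together with the routine check that $\iota_C$ sends the top Chern--Weil cochain of a product genus to its lower-degree components with the sign $(-1)^{n-k}$ carried by the $\iota_\pi$'s.

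\emph{Step 2: landing in the relative complex.} By \eqref{basic} and its twisted analogue from \S\ref{LRR}, the cochain $\tau_{2n}^{\rm Hoch}$ (and its matrix extension) is $(\mathfrak{gl}_n\oplus\mathfrak{gl}_r)$-basic and invariant; hence ${\rm ev}_1(\tau_{2n}^{\rm Hoch},r)$ is a cochain of the relative complex $C^\bullet(\g,\h)$ with $\g=\mathfrak{gl}(W_n)\oplus\mathfrak{gl}_r(\calO_{n})$ and $\h=\mathfrak{gl}_n\oplus\mathfrak{gl}_r$, which is precisely the target of the Chern--Weil map $\chi$ of \S\ref{LRR}. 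In particular it is legitimate to evaluate both sides on elements $X_1,\dots,X_{2n}\in\g$ and to expect the answer to be polynomial in the curvatures $C(X_i,X_j)=[\pi X_i,\pi X_j]-\pi[X_i,X_j]$.

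\emph{Step 3: the integral computation.} Substituting $a_0=1$, $a_i=X_i$ into \eqref{Hoch-cocycle}, antisymmetrizing, and using that $\mu_{2n}$ is evaluation at the origin, one expands $\prod_{i<j}e^{(t_j-t_i-\frac12)\pi_{ji}}\,(1\otimes\pi^{\wedge n})$ as a sum of Wick contractions; only those pairing up the $X_i$'s among themselves (never hitting the constant $1$) survive evaluation at $0$, and they organize into a sum over graphs on $2n$ labelled vertices whose amplitude is an iterated integral over $\Delta^{2n}$. Performing these Gaussian integrals produces exactly the Todd generating series $\det\bigl(X/(1-e^{X})\bigr)$ of \eqref{cherntodd} in the variables $C(X_i,X_j)$, while the matrix trace $\tr(M_0\cdots M_{2n})$ coming from the extension $\tau^{w,r}$ contributes the Chern character $\Tr(e^{C})=\Ch_r$ after the same antisymmetrization; the overall constant, including the sign $(-1)^n$, is pinned down by matching with the normalization \eqref{normalization}. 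The main obstacle is precisely this step: controlling the combinatorics of the simplex integrals and recognizing the resulting power series as $\Td\cdot\Ch$. This is the technical core of \cite{willwacher} (and of \cite{ffs,ppt}), which I would import essentially verbatim, adding only the straightforward $\mathfrak{gl}_r$-trace bookkeeping and keeping track of the polarization $\mathfrak{gl}_n$ throughout.
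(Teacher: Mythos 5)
Your proposal follows essentially the same route as the paper, which gives no independent argument for Theorem \ref{lrr} beyond stating that ``the proof follows as in \cite{willwacher} by evaluating the integrals appearing in the formula for $\tau_{2n}^{w}$'' and adding the $\mathfrak{gl}_{r}$-trace; your Steps 1--3 are a reasonable expansion of exactly that strategy, with the technical core imported from \cite{ffs,ppt,willwacher} just as the paper intends.
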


\end{document}